\tikzstyle{mybox} = [fill=gray!20,    rectangle, inner sep=10pt, inner ysep=10pt]
\newcounter{marnote}
\newtheorem{thm}{\normalfont\bfseries\MakeUppercase{Theorem}}[section]
\newtheorem*{thm*}{\normalfont\bfseries{Theorem}}
\newtheorem{lem}[thm]{\normalfont\bfseries{Lemma}} 
\newtheorem*{lem*}{\normalfont\bfseries{Lemma}}
\newtheorem{prop}[thm]{\normalfont\bfseries{Proposition}}
\newtheorem{rk}{\normalfont\bfseries Remark}[section]
\renewcommand{\leq}{\leqslant}
\renewcommand{\geq}{\geqslant}
\def \Hom {\mathbf{Hom}}
\def \SO {\mathbf{SO}}
\def \so {\mathfrak{so}}
\def \O {\mathbf{O}}
\def \L {\mathrm{L}}
\def \W {\mathrm{W}}
\def \R {\mathbf{R}}
\def \Z {\mathbf{Z}}
\def \N {\mathbf{N}}
\def \epsi {\varepsilon}
\def \loc {\text{\rm loc}}
\def \t {{}^t}
\def \eg {\textit{ e.g.}}
\def \ie {\textit{ i.e.}}
\DeclareMathOperator{\laplace}{\bigtriangleup}
\DeclareMathOperator{\supp}{supp}
\DeclareMathOperator{\Rm}{\mathbf{Rm}}
\def \II {\mathbf{II}}
\let\div\relax
\DeclareMathOperator{\div}{div}
\DeclareMathOperator{\grad}{grad}
\DeclareMathOperator{\curl}{curl}
\DeclareMathOperator{\Vol}{Vol}
\DeclareMathOperator{\tr}{Tr}
\def \weakstar {\stackrel{\ast}{\rightharpoonup}}
\def \eg {\textit{e.g.} }
\newcommand{\euc}{{\mathfrak{e}}}
\def\XXint#1#2#3{{\setbox0=\hbox{$#1{#2#3}{\int}$ }
\vcenter{\hbox{$#2#3$ }}\kern-.6\wd0}}
\newcommand{\dd}{{\rm d}}
\newcommand{\DD}{{\rm D}}
\numberwithin{equation}{section}
\numberwithin{figure}{section}
\title[Weak continuity of curvature for connections in $\L^p$]{Weak continuity of curvature for connections in $\L^p$}
\author{Gui-Qiang G. Chen and Tristan P. Giron}
\begin{document}
\thanks{
The research of Gui-Qiang G. Chen was supported in part by
the UK
Engineering and Physical Sciences Research Council Award
EP/L015811/1 and EP/V008854,
and the Royal Society--Wolfson Research Merit Award WM090014 (UK).
The research of Tristan P. Giron  was supported in part by the UK
Engineering and Physical Sciences Research Council Award
EP/L015811/1.
}
\subjclass[2020]{53C05, 53C21, 35F50, 35B35, 58A15 (53C17, 58J10, 35B20, 35D30, 35M30)}
\email{Gui-Qiang G. Chen: chengq@maths.ox.ac.uk} 
\email{Tristan P. Giron: tristan.giron@gmail.com} 
\address{Mathematical Institute, University of Oxford, Andrew Wiles Building, Woodstock Road, Oxford OX2 6GG, United Kingdom}

\maketitle

\begin{abstract}
We study the weak continuity of two interrelated non-linear partial differential equations, the Yang--Mills equations and the Gau\ss--Codazzi--Ricci equations, involving $\L^p$-integrable connections. Our key finding is that underlying cancellations in the curvature form, especially the div-curl structure inherent in both equations, are sufficient to pass to the limit in the non-linear terms. We first establish the weak continuity of Yang--Mills equations and prove that any weakly converging sequence of weak Yang--Mills connections in $\L^p$ converges to a weak Yang--Mills connection. We then prove that, for a sequence of isometric immersions with uniformly bounded second fundamental forms in $\L^p$, the curvatures are weakly continuous, which leads to the weak continuity of the Gau\ss--Codazzi--Ricci equations with respect to sequences of isometric immersions with uniformly bounded second fundamental forms in $\L^p$. Our methods are independent of dimensions and do not rely on gauge changes.
\end{abstract}

\maketitle
\setcounter{tocdepth}{1}
\tableofcontents

\section{Introduction}
We are concerned with sequences of connections on principal bundles, which are central objects of study in several important problems
in analysis, geometry, and mechanics, such as Yang--Mills equations, harmonic maps, wave maps, and isometric immersions
({\it cf.} \eg
\cite{csw-isometric,freire-muller-struwe,freire-muller-struwe-2,nakajima-compactness-yang-mills,riviere-park,riviere-variations,uhlenbeck-connections,sedlacek,donaldson-kronheimer,chen-li,helein} and the references therein) from the viewpoint of conservation laws (see \eg \cite{evans-weak,dafermos}).
When the curvatures of such sequences are controlled in sufficiently strong norms (to be precise, in scaling-invariant norms), it is possible to construct suitable gauges so as to extract a gauge-equivalent weakly $\W^{1,p}$--converging subsequence of connections, for example, by means of the Uhlenbeck gauge-fixing theorem \cite{uhlenbeck-connections}.
However, in many cases, the curvatures cannot be controlled directly in such norms, so that Uhlenbeck's gauge construction is no longer applicable, or only up to a singular
set ({\it cf}. \cite{nakajima-compactness-yang-mills}).
One of the objectives of this paper is to investigate several non-linear problems involving sequences of connections with a uniform $\L^p$ bound.

We study sequences of Yang--Mills connections as well as isometric immersions of Riemannian manifolds, under some mild boundedness assumptions. Our key finding is that the underlying cancellations in the curvature forms, especially
the div-curl structure inherent to each of these problems,
are sufficient to pass to limits in the non-linear terms.
We establish the weak continuity of both the Yang--Mills equations and the Gau\ss--Codazzi--Ricci
equations with respect to the weak $\L^p$ convergence. Our methods may be useful in other problems involving connections and curvatures.

Our approach is inspired from, and motivated by, the theory of compensated compactness, first initiated
by Murat \cite{murat-compcomp1} and Tartar \cite{tartar-pde} in Euclidean spaces.
Our results are similar in spirit to other weak continuity results,
such as those for harmonic maps \cite{helein} or wave maps \cite{freire-muller-struwe,freire-muller-struwe-2};
see also \cite{evans-weak} and the references therein.
As mentioned earlier, both our methods and results are independent of the gauge invariance of the problems under consideration, different from previous results such as 
\cite{uhlenbeck-connections,tian-gauge,riviere-variations,nakajima-compactness-yang-mills}
and the references cited therein for the Yang--Mills equations,
and the results in the forthcoming paper \cite{chen-giron-gauge} for the Gau\ss--Codazzi--Ricci equations.
For the Gau\ss--Codazzi--Ricci equations, the results of the present paper generalize
those in \cite{csw-weak-cty,chen-li},
where additional boundedness assumptions are required.

\subsection{Weak continuity of the Yang--Mills equations}
Our first result is the weak continuity of the Yang--Mills equations with respect to sequences of Yang--Mills connections.

Let $G$ be a matrix group, and let $P$ be a principal $G$-bundle over a Riemannian manifold $M$. The primary objects of interest for the Yang--Mills theory are connections $\nabla_A$ on $P$
and their curvatures, denoted by $\Omega_A$,
both of which are represented by differential forms taking values in the Lie algebra $\mathfrak{g}$ of the Lie group $G$ (see §\ref{subsection-principal} for our notations). 
The Yang--Mills action is the functional
\begin{equation}\label{eqn-action-ym}
\mathcal{YM}:\nabla_A\mapsto\int_M |\Omega_A|^2.
\end{equation}
Here and hereafter, we always drop ${\rm d}V$ or $\dd x$ in the integrals over manifolds or domains for simplicity.
The Yang--Mills action is finite for connections in $\W^{1,2}\cap\L^4$.
A connection is said to be Yang--Mills if it is a critical point of the Yang--Mills action,
{\it i.e.}, it satisfies
\begin{equation}\label{eqn-yang-mills}
\DD_A^*\Omega_A=0,
\end{equation}
where $\DD_A^*$ is the adjoint of the exterior derivative $\DD_A$ associated with the connection $\nabla_A$;
see \S \ref{section-connections} for further details and notations used throughout the paper.

We recall that due the gauge invariance, the Yang-Mills equations \eqref{eqn-yang-mills} are not elliptic with respect to the connection $\nabla_A$ (see \eg \cite{donaldson-kronheimer}). Previous results, most notably Uhlenbeck's compactness theorem \cite{uhlenbeck-connections}, established weak compactness for sequences of $\W^{1,p}$ connections with curvatures uniformly bounded in $\L^{n/2}$ by selecting adequate gauges. 
Our first result (Theorem \ref{thm-yang-mills} below) proves the stability of the class of Yang--Mills connections with respect to the weak convergence in $\L^p$ without gauge changes:
{\it Let $\nabla_{A_\epsi}$ be a sequence of weak Yang--Mills connections, converging weakly in $\L^4_{\rm loc}$
	to a connection $\nabla_A$.
	Assume that the sequence has locally uniformly bounded energy, \ie, on any bounded domain $E\subset M$,
	\begin{equation}\label{eqn-hypothesis-yang-mills}
	\sup_{\epsi>0} \Vert \Omega_{A_\epsi} \Vert_{\L^2(E)}<\infty.
	\end{equation}
	Then the weak limit connection $\nabla_A$ is Yang--Mills with corresponding curvature $\Omega_{A}$ and locally finite energy.
}

Hypothesis \eqref{eqn-hypothesis-yang-mills} states that the connection sequence $\nabla_{A_\epsi}$
has a uniformly finite Yang--Mills energy locally.

In \cite{nakajima-compactness-yang-mills,tian-gauge}, it is proved that,
under the hypothesis of finite energy \eqref{eqn-hypothesis-yang-mills}, up to a subsequence,
$\nabla_{A_\epsi}$ gauge-converges to a weak Yang--Mills connection, away from a singular set of codimension $4$.
Our result contrasts starkly with this situation: weak $\L^4$ convergence is sufficient to ensure no concentration of energy occurs
in {\it any} dimension, in any fixed trivialization of the bundle $P$.
We remark that Theorem \ref{thm-yang-mills} can also be extended to approximate solutions of the Yang--Mills equations, {\it i.e.},
sequences of connections $\nabla_{A_\epsi}$ such that $\DD^*_{A_\epsi}\Omega_{A_\epsi}\weakstar0$ in the sense of
distributions as $\epsi\to 0$.

\subsection{Weak continuity of the Gau\ss-Codazzi-Ricci equations}

The next result is the weak continuity of the Gau\ss-Codazzi-Ricci equations with respect to sequences of isometric immersions with uniformly bounded second fundamental forms in $\L^p_{\loc}$ for $p>2$, which improves the earlier results of \cite{chen-li,csw-weak-cty}.
The main point is to prove that curvatures are weakly continuous with respect to sequences of isometric immersions with uniformly bounded second fundamental form in $\L^p_{\loc}$ for $p>2$.

The motivations for such results are to understand the relationship between intrinsic and extrinsic properties of manifolds. Since the work of Nash \cite{nash-1,nash-smooth} and Kuiper \cite{kuiper}, it is known that uniform limits of isometric immersions and embeddings may fail to be isometric. However, it is easy to show (see Appendix \ref{section-very-weak}) that the limit of a sequence of isometric immersions with bounded mean curvature is still isometric. Curvatures are central objects of interest for Riemannian metrics, and it is known that the relationship between intrinsic and extrinsic curvature plays an important role in rigidity questions: see \eg \cite{cdls, gromov-columbus,pogorelov} and the references therein. We therefore enquire whether the curvature equations hold in the limit for a suitably converging sequence of isometric immersions.

Another motivation is the construction of isometric immersions into prescribed target spaces, which is another important question on the topic of isometric immersions. In \cite{csw-isometric} (and the subsequent works \cite{chw-1,chw-2}), the authors were able to construct isometric immersions by solving the Gau\ss--Codazzi--Ricci system via weak compactness methods.  Such results are also related with intrinsic elasticity: see \cite{ciarlet-2,ciarlet-3,ciarlet-larsonneur} and the references therein.

Let $(M,g)$ be a
Riemannian manifold.
Recall that an immersion is a map $u\,:\,(M,g)\rightarrow (\widetilde{M},\tilde{g})$,
where $(\widetilde{M},\tilde{g})$ is another
Riemannian manifold,
such that the tangent map $\dd u\,:\,TM\rightarrow T\widetilde{M}$ is injective.
It is isometric if the metric $\tilde{g}$ pulls back to $g$ under $u$, or equivalently $u^*\tilde{g}=g$.
In local coordinates on $M$, this reads
\begin{equation}\label{eqn-isometry}
\tilde{g}(u_*\frac{\partial}{\partial x^i},u_*\frac{\partial}{\partial x^j}) = g_{ij}.
\end{equation}
A weak $\W^{2,p}_{\loc}$ isometric immersion is a $\W^{2,p}_{\loc}$ map that satisfies \eqref{eqn-isometry}
and is additionally a bi-Lipschitz homeomorphism onto its image
(see \S \ref{section-defi-immersion}).

The immersion map identifies the tangent bundle $TM$ as a sub-bundle of $T\widetilde{M}$.
The orthogonal complement of $TM$ in $T\widetilde{M}$ is the normal bundle $NM$,
which may also be viewed as a sub-bundle of $T\widetilde{M}$.
Moreover, we recall that the Levi-Civita connection on $\widetilde{M}$ induces a connection on the normal bundle and a second fundamental form $\II$
which is an element of $\Hom(TM\times TM,NM)$ and, in components, satisfies $\II_{ij}^k = \II_{ji}^k$ (see \eg \cite{spivak} for a comprehensive presentation).
The second fundamental form and the normal connection characterize the extrinsic geometry of the immersion;
together, they satisfy the Gau\ss--Codazzi--Ricci equations, linking the extrinsic geometry and the intrinsic geometry of the manifold.
Our second result, Theorem \ref{thm-main-isometric-immersions}, proves the following:
{\it Let $u_\epsi\,:\,(M,g)\rightarrow(\widetilde{M},\tilde{g})$ be a sequence of  isometric immersions, whose second fundamental forms $\II_\epsi$ are locally uniformly
	bounded in $\L^p_{\loc}$ for $p>2$. Then, up to a subsequence, $u_\epsi$ converges weakly to a weak $\W^{2,p}_{\loc}$ isometric immersion obeying
	the Gau\ss--Codazzi--Ricci equations.
	In particular, the extrinsic curvatures are weakly continuous with respect to sequence $u_\epsi$.}

The uniform boundedness of the second fundamental forms $\II_\epsi$ in $\L^p_{\loc}$ is equivalent
to the uniform boundedness of the Hessians of the isometric immersions $u_\epsi$ in $\L^p_{\loc}$.
Thus, the weak convergence (up to a subsequence) of the sequence of $u_\epsi$ follows directly
from the Banach--Alaoglu theorem.
The main point of the result described above ({\it cf.} Theorem \ref{thm-main-isometric-immersions})
is to verify that the limit point of such a subsequence
still satisfies the Gau\ss--Codazzi--Ricci equations. Indeed, recall that a priori, the extrinsic geometry depends on the second order derivatives of $u$, while the intrinsic geometry depends on second order derivatives of $g$, and thus third order derivatives of $u$; moreover, the relation between them is given by the Gau\ss--Codazzi--Ricci system, a non-linear, mixed-type system of PDE.

The case $p=2$ requires special attention. In general, it is not true that a sequence of immersions
converging weakly in $\W^{2,2}_{\loc}$ converges to another immersion determined by the corresponding
Gau\ss--Codazzi--Ricci equations, even for surfaces in $\R^3$ (see \cite{langer} for a counter-example).
However, we show that this is the case for a sequence of codimension-one isometric
immersions (see Theorem \ref{thm-weak-cty-equi}).

Our results improve the weak continuity and pre-compactness results obtained in \cite{chen-li} (that require $p>n$)
to the more general case of $p>2$ and arbitrary Riemannian backgrounds by using an ad hoc construction, rather than relying on the realisation theorem for submanifolds (see \cite{chen-li,ciarlet-larsonneur,mardare-lp}).
In addition, we also study the borderline case $p=2$.
We remark that, in this paper, we focus our analysis on the sequences of isometric immersions.
Our approach can be applied to the sequences of approximate solutions
of the Gau\ss--Codazzi--Ricci equations,
but this presents additional difficulties; see \cite{chen-giron-gauge} for further discussions.

\medskip
This paper is organized in five sections.
In \S \ref{section-connections}, we first recall the basics of the Yang--Mills equations
and the Gau\ss--Codazzi--Ricci equations on Riemannian manifolds, and then give the precise description of the main theorems,
Theorems \ref{thm-yang-mills} and \ref{thm-main-isometric-immersions}.
Theorem \ref{thm-yang-mills} is proved in \S \ref{section-yang-mills}. Next, in \S \ref{section-immersions}, we provide the proof of Theorem \ref{thm-main-isometric-immersions}
for the weak continuity of the Gau\ss--Codazzi--Ricci equations and related problems.
In \S \ref{section-generalisation}, we present extensions of Theorem \ref{thm-main-isometric-immersions}, including the case $p=2$, the case of semi-Riemannian manifolds, and the problem of minimising the $\L^p$ norm of the second fundamental form for isometric immersions. There are two appendices. Appendix \ref{section-very-weak} presents a general result about the stability of metrics along sequences of isometric immersions. In Appendix \ref{appendix-div-curl}, we review some basic material and notations
used in the proofs concerning div-curl lemmas.

\section{Setting and Main Theorems}\label{section-connections}

In this section, we give the precise setting of the main theorems, Theorems \ref{thm-yang-mills}--\ref{thm-main-isometric-immersions}. Both rely on a common observation concerning the properties of the curvature form, presented in detail in \S\ref{subsection-method}. 
First, in \S \ref{subsection-principal}, we recall some basics on principal bundles. This material is well-known; standard references are \eg \cite{uhlenbeck-wehrheim}.
In \S\ref{subsection-yang-mills}, we present a precise statement of our first main theorem, Theorem \ref{thm-yang-mills},
about the Yang--Mills equations. 
Our second main theorem, Theorem \ref{thm-main-isometric-immersions}, along with the additional necessary pre-requisites
about the Gau\ss--Codazzi--Ricci equations, is introduced in \S\ref{subsection-immersions-sequences}--\S\ref{section-gcr}.
Finally, in \S\ref{subsection-method}, we sketch the proofs for
both Theorems \ref{thm-yang-mills}--\ref{thm-main-isometric-immersions}.

\subsection{Principal bundles}\label{subsection-principal}
Throughout this paper, we always assume that $M$ is an $n$-dimensional
Riemannian manifold with required regularity.
Let $P$ be a principal
bundle over $M$,
and let the structure group $G$ of $P$ be a matrix group.
If $P$ is equipped with a scalar product on the fibers,
we assume that $G$ is a subgroup of the special orthogonal group preserving the prescribed inner product on the fiber.
We denote by $\mathbf{Ad}(P)$ the adjoint bundle, whose fiber is the Lie algebra $\mathfrak{g}$ generated by $G$.
Finally, $\Gamma(M,P)$ denotes the set of sections of bundle $P$ over $M$.

Let $A,B\in\Lambda^k(M,\mathbf{Ad}(P)):=\Gamma(M,\mathbf{Ad}(P)\otimes \wedge^kT^*M)$ be $\mathbf{Ad}(P)$-valued $k$-forms.
We denote by $[\cdot,\cdot]$ the Lie bracket on the Lie algebra $\mathfrak{g}$, which induces a Lie bracket defined fiberwise
on $\mathbf{Ad}(P)$, and by $[A\wedge B]$ the wedge product of two $k$-forms, the values of which are combined through the Lie bracket.
As $\mathfrak{g}$ is a Lie algebra and $G$ is a compact Lie group,
there exists a unique $G$-equivariant scalar product on the fibers of $\mathbf{Ad}(P)$,
denoted by $\langle\cdot,\cdot\rangle$.
The scalar product is compatible with the Lie bracket in the sense that the triple product identity
$\langle [\cdot,\cdot],\cdot\rangle=\langle \cdot,[\cdot,\cdot]\rangle$ holds.
In addition, we rescale it in such a way that $|[\xi,\zeta]|\leq |\xi||\zeta|$ for arbitrary $\xi,\zeta\in\mathfrak{g}$.
By abuse of notation, $\langle\cdot,\cdot\rangle$ also denotes the scalar product on $\Lambda^k(M,\mathbf{Ad}(P))$ obtained
by combining the scalar product on $\mathfrak{g}$ and the Riemannian metric $g$ on $M$.

The wedge product of $\mathfrak{g}$-valued forms, $[\alpha\wedge\beta]\in\Lambda^{p+q}(M,\mathfrak{g}),	$
is defined as follows:
for $\alpha\in\Lambda^p(M,\mathfrak{g})$ and $\beta\in\Lambda^q(M,\mathfrak{g})$,
\begin{equation*}
[\alpha\wedge\beta](X_1,\dots,X_{p+q}) =\sum_{\sigma\in \text{Sh}_{p+q}}\epsi(\sigma)[\alpha(X_{\sigma(1)},\dots,X_{\sigma(p)}),\beta(X_{\sigma(p+1)},\dots,X_{\sigma(p+q})],
\end{equation*}
where $\epsi(\sigma)$ is the signature of shuffle $\sigma\in \text{Sh}_{p+q}$,
{\it i.e.}, the monotone bijections of $\Z_{p+q}$. Then we have
\begin{align*}
&[\alpha\wedge\beta] = (-1)^{pq+1}[\beta\wedge\alpha],\\
&[[\alpha,\alpha],\alpha]=0,\\
&\dd[\alpha\wedge\beta]=[\dd\alpha\wedge \beta]+(-1)^{pq}[\alpha\wedge \dd\beta].
\end{align*}

We denote
$\mathcal{A}(P)$ the space of
connections on $P$; 
$\mathcal{A}(P)$ is an affine space. Let us fix a base connection $\widetilde{\nabla}$.
Then any connection in $\mathcal{A}(P)$ differs from $\widetilde{\nabla}$ by an $\mathbf{Ad}(P)$-valued one-form.
Up to a choice of local trivialization of $P$, we can view any connection $\nabla_A\in\mathcal{A}(P)$
as being given by a $\mathbf{Ad}(P)$-valued differential one-form $A$, so that
\begin{equation*}
\nabla_A=\widetilde{\nabla}+A \qquad \mbox{with $\,A\in \Gamma(M, \mathbf{Ad}(P)\otimes T^*M)$}.
\end{equation*}
Equivalently, we may consider a covering of $P$ by trivializing sets $U^j$, so that
a connection $\nabla_A$ is then given as a list of $\mathfrak{g}$-valued one-forms $A^j$.

The main invariant of a connection is its curvature.
For a connection ${\nabla_A\in\mathcal{A}(P)}$, its curvature is
the two-form $\Omega_A\in \Lambda^2(M,\mathbf{Ad}(P))$
representing the action of $\nabla_A\circ\nabla_A$.
In a local trivialization $U^j$, we may write $\nabla_A=\dd + A^j$ so that its connection is
\begin{equation}\label{2.5a}
\Omega_A|_{U^j} = \dd A^j + \frac{1}{2}[A^j\wedge A^j].
\end{equation}

We are concerned with the weak continuity of $\Omega_A$ with respect to the sequences of connections
in some Sobolev spaces that are defined as follows:
let $k\in\N$ and $1\leq p\leq\infty$. Denote $\mathcal{A}^{k,p}(P)$ as the spaces of connections on $P$ defined by
\begin{align*}
\mathcal{A}^{k,p}_{\loc}(P) %
= \big\{ \widetilde{\nabla}+A\, :\, A\in\W^{k,p}_{\loc}(M,\mathbf{Ad}(P)\otimes T^*M)\big\}.
\end{align*}
Recall that $\mathbf{Ad}(P)\otimes T^*M$ is a vector bundle equipped with a scalar product.
Thus, for all $k\in\N$ and $1\leq p \leq \infty$, the Sobolev space $\W^{k,p}_{\loc}$ of sections of bundle $\mathbf{Ad}(P)\otimes T^*M$
is defined as the completion of smooth
sections with respect to the natural Sobolev norms locally. We note that the curvature is well-defined in the sense of distributions for connections in $\mathcal{A}^{0,2}_\loc(P)$.

\subsection{Yang--Mills functional}\label{subsection-yang-mills}
Recall (see \eg \cite{donaldson-kronheimer}) that a connection $\nabla_A\in\mathcal{A}(P)$ is a Yang--Mills connection if it is a critical point of the Yang--Mills action integral $\mathcal{YM}(\nabla_A)$, defined by
\begin{equation}\label{eqn-ym-functional}
\mathcal{YM}(\nabla_A):=\Vert\Omega_A\Vert_{\L^2}^2=\int_M |\Omega_A|^2.
\end{equation}
The Euler--Lagrange equation of this functional is the Yang--Mills equation,
\begin{equation}\label{eqn-yang-mills-b}
\DD^*_A\Omega_A=0,
\end{equation}
where $\DD_A^*$ refers to the adjoint of $\DD_A$, the covariant derivative associated with $\nabla_A$.
Recall that, for any $k$-form $\omega\in\Gamma(M,\mathfrak{g}\otimes\wedge^kT^*M)$,
its covariant derivative is given by
\begin{equation*}
\DD_A\omega = \dd\omega + [A\wedge\omega].
\end{equation*}
Thus, the adjoint covariant derivative is given by
\begin{equation*}
\DD_A^*\omega=\delta \omega -(-1)^{(n-k)(k-1)} *[A \wedge *\omega]
\end{equation*}
for any arbitrary $k$-form $\omega \in\Gamma(M,\mathfrak{g}\otimes \wedge^k T^*M)$,
where $\delta$ is the co-differential operator on $k$-forms, and $*$ is the Hodge duality operator for $n=\dim(M)$.

On a compact Riemannian manifold without boundary, the Yang--Mills functional \eqref{eqn-ym-functional} is finite for connections ${\nabla_A\in\mathcal{A}^{1,2}(P)\cap\mathcal{A}^{0,4}(P)}$.
By Sobolev embeddings, $W^{1,p}$ connections for $p\geq \max(\frac{4n}{n+4},2)$ are in this class.
Indeed, such connections are in $\L^4$ so that $[A\wedge A]\in\L^2$ and $\Omega_A\in\L^2$.
In such a space, the critical points of \eqref{eqn-ym-functional} satisfy equation \eqref{eqn-yang-mills-b} in the weak sense;
that is, for all $\phi\in\Lambda^1(M,\mathbf{Ad}(P))$,
\begin{equation}\label{eqn-yang-mills-weak}
\int_M \langle \Omega_A,\DD_A\phi\rangle=0.
\end{equation}
This is the weak form (or distributional form) of equation \eqref{eqn-yang-mills-b}.

We now state our first main theorem.

\begin{thm}\label{prop-yang-mills}\label{thm-yang-mills}
	Let $P$ be a principal $G$-bundle over $M$, and let $\widetilde{\nabla}$ be a fixed connection.
	Consider a sequence of weak Yang-Mills connections $\nabla_{A_\epsi}\in\mathcal{A}^{0,4}(P)$,
	which can be written as $\nabla_{A_\epsi}=\widetilde{\nabla}+A_\epsi$
	for the $\mathbf{Ad}(P)$-valued one-form $A_\epsi$, so that $\DD^*_{A_\epsi}\Omega_{A_\epsi}=0$.
	Assume that
	\begin{subequations}
	\begin{align}
	&A_\epsi\rightharpoonup A \text{ weakly in } \L^4_{\loc}(M,\mathbf{Ad}(P)\otimes T^*M),\label{eqn-finite-action-a}\\
	&\sup_{\epsi>0} \Vert \Omega_{A_\epsi} \Vert_{\L^2(E)}<\infty \qquad\mbox{for any bounded domain $E\subset M$}.
	\label{eqn-finite-action}
	\end{align}
	\end{subequations}
	Then $\widetilde{\nabla}+A$ is a weak Yang--Mills connection.
\end{thm}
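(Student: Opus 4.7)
The plan is to bypass any gauge fixing and pass to the limit directly in the weak Yang--Mills equation, using compensated compactness together with extra regularity for the curvatures inherited from the Yang--Mills equation itself and the Bianchi identity. I proceed in three steps: (i)~identify the weak $\L^2_\loc$ limit $\bar\Omega$ of $\Omega_{A_\epsi}$ with $\Omega_A=\dd A+\tfrac12[A\wedge A]$; (ii)~promote this weak convergence to strong convergence of $\Omega_{A_\epsi}$ in $\L^{4/3}_\loc$; (iii)~pass to the limit in the weak form of the equation.

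For step~(i), the defining relation $\Omega_{A_\epsi}=\dd A_\epsi+\tfrac12[A_\epsi\wedge A_\epsi]$ together with $A_\epsi\in\L^4_\loc$ and the uniform $\L^2_\loc$ bound on $\Omega_{A_\epsi}$ yields a uniform $\L^2_\loc$ bound on $\dd A_\epsi$, so Banach--Alaoglu produces weak $\L^2_\loc$ limits $\dd A_\epsi\rightharpoonup\dd A$ and $\Omega_{A_\epsi}\rightharpoonup\bar\Omega$ along a subsequence. Because $\dd A_\epsi$ is bounded in $\L^2_\loc$, it is precompact in $W^{-1,2}_\loc$ by Rellich duality. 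The wedge bracket $Q(\alpha,\beta):=[\alpha\wedge\beta]$ vanishes on the wave cone of $\dd$ (if $\xi\wedge\hat\alpha=0$ then $\hat\alpha=t\,\xi$ for some $t\in\mathfrak g$, whence $[t\xi\wedge t\xi]=[t,t]\,\xi\wedge\xi=0$), so Murat--Tartar compensated compactness gives $[A_\epsi\wedge A_\epsi]\rightharpoonup[A\wedge A]$ in $\mathcal D'$. Combined with $\dd A_\epsi\rightharpoonup\dd A$, this identifies $\bar\Omega=\Omega_A$.

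For step~(ii), in a fixed trivialisation the Yang--Mills equation and the Bianchi identity $\DD_{A_\epsi}\Omega_{A_\epsi}=0$ read
\[
\delta\Omega_{A_\epsi}=\pm *[A_\epsi\wedge *\Omega_{A_\epsi}], \qquad \dd\Omega_{A_\epsi}=-[A_\epsi\wedge\Omega_{A_\epsi}],
\]
both right-hand sides bounded in $\L^{4/3}_\loc$ by H\"older. Since $\dd+\delta$ is a first-order elliptic operator on forms, interior elliptic regularity (applied locally via cutoffs, using $\Omega_{A_\epsi}\in\L^2_\loc\subset\L^{4/3}_\loc$ on bounded sets) produces a uniform bound on $\Omega_{A_\epsi}$ in $W^{1,4/3}_\loc$. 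Rellich--Kondrachov then yields, along a further subsequence, strong convergence $\Omega_{A_\epsi}\to\Omega_A$ in $\L^{4/3}_\loc$.

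For step~(iii), testing the weak Yang--Mills equation against a smooth compactly supported $\phi\in\Lambda^1(M,\mathbf{Ad}(P))$ gives
\[
\int_M\langle\Omega_{A_\epsi},\dd\phi\rangle+\int_M\langle\Omega_{A_\epsi},[A_\epsi\wedge\phi]\rangle=0.
\]
The linear term passes to the limit by weak $\L^2_\loc$ convergence of $\Omega_{A_\epsi}$; the nonlinear term passes because $\Omega_{A_\epsi}\to\Omega_A$ strongly in $\L^{4/3}_\loc$ while $[A_\epsi\wedge\phi]\rightharpoonup[A\wedge\phi]$ weakly in $\L^4_\loc$. The limiting identity $\int\langle\Omega_A,\DD_A\phi\rangle=0$ is the weak Yang--Mills equation for $\widetilde\nabla+A$, and uniqueness of the limit promotes subsequential to full convergence. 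The main obstacle is step~(ii): the $\L^{4/3}_\loc$ compactness of the curvatures genuinely demands using both the Yang--Mills equation (controlling $\delta\Omega_{A_\epsi}$) and the Bianchi identity (controlling $\dd\Omega_{A_\epsi}$) simultaneously, as only the pair makes the elliptic operator $\dd+\delta$ available for a regularity bootstrap. This is the precise manifestation of the div-curl structure highlighted in the introduction and what removes the need for Uhlenbeck-type gauge constructions.
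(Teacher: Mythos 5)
Your proposal is correct in substance, but its key step takes a genuinely different route from the paper's. Your step (i) is a Murat--Tartar wave-cone version of Proposition \ref{thm-main-chapter-connections}: the paper obtains the same weak continuity $[A_\epsi\wedge A_\epsi]\rightharpoonup[A\wedge A]$ from the Hodge-decomposition div-curl lemma for $\mathfrak{g}$-valued forms (Lemma \ref{lem-div-curl-lie-algebra}), whereas you check directly that the bracket vanishes on the wave cone of $\dd$; both are legitimate, and your observation that $\dd A_\epsi$ is bounded in $\L^2_\loc$, hence precompact in $\W^{-1,2}_\loc$, plays the role of the paper's interpolation step. The real divergence is in the nonlinear term $\int\langle\Omega_{A_\epsi},[A_\epsi\wedge\phi]\rangle$: the paper never proves strong compactness of the curvatures, but instead applies the div-curl lemma a second time to the pair $(\Omega_{A_\epsi},A_\epsi)$ with exponents $(\frac32,3)$, using the Bianchi identity only to place $\dd\Omega_{A_\epsi}$ in $\L^1_\loc$ and hence, after interpolation, compactly in $\W^{-1,3/2}_\loc$. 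You instead use the Yang--Mills equation and the Bianchi identity simultaneously to bound $\delta\Omega_{A_\epsi}$ and $\dd\Omega_{A_\epsi}$ in $\L^{4/3}_\loc$ and invoke interior $\L^p$ estimates for the first-order elliptic operator $\dd+\delta$ to get a uniform $\W^{1,4/3}_\loc$ bound, hence strong $\L^{4/3}_\loc$ convergence of $\Omega_{A_\epsi}$, which trivialises the passage to the limit. This buys a strictly stronger intermediate conclusion (local strong compactness of the curvature sequence, which the paper neither claims nor needs) at the price of Calder\'on--Zygmund machinery; the paper's route stays entirely within soft compensated-compactness arguments and uses the Yang--Mills equation only through the weak formulation being tested.

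Two points need more care before your argument is complete. First, the distributional Bianchi identity $\dd\Omega_{A_\epsi}=-[A_\epsi\wedge\Omega_{A_\epsi}]$ is not automatic for connections that are merely $\L^4_\loc$ with curvature in $\L^2_\loc$; the paper devotes Step 1 of its proof to a weak Bianchi identity (under a $\W^{1,p}_\loc$ hypothesis). At your regularity it does hold, but it requires a proof: in a local trivialization, $\dd A_\epsi=\Omega_{A_\epsi}-\frac12[A_\epsi\wedge A_\epsi]\in\L^2_\loc$, so mollification and the graded Leibniz rule give $\dd\Omega_{A_\epsi}=[\dd A_\epsi\wedge A_\epsi]$ in $\mathcal{D}'$, and the identity $[[A_\epsi\wedge A_\epsi]\wedge A_\epsi]=0$ then yields $\dd\Omega_{A_\epsi}=[\Omega_{A_\epsi}\wedge A_\epsi]=-[A_\epsi\wedge\Omega_{A_\epsi}]$. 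Since this identity is a crucial input to your elliptic estimate, it should be spelled out. Second, your computations replace $\widetilde{\nabla}$ by $\dd$, which is only valid in a local trivialization; as in the paper's proof of Proposition \ref{thm-main-chapter-connections}, the globally stated weak Yang--Mills equation for an arbitrary compactly supported $\phi$ should be recovered by a partition-of-unity argument. Neither point affects the architecture of your proof.
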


The first hypothesis \eqref{eqn-finite-action-a} is the weak convergence hypothesis.
The second hypothesis \eqref{eqn-finite-action} states that the sequence considered has local finite energy.
We recall that, as a distribution, the curvature form makes sense for $\L^2_\loc$ connections.
Hypothesis \eqref{eqn-finite-action} states that this distribution is in fact an $\L^2_\loc$ function, for each $\epsi>0$. This does not imply that the connection itself is bounded in $\W^{1,2}_\loc$ in general.

We remark that the same result in Theorem \ref{prop-yang-mills} holds if the sequence of Yang-Mills connections $\nabla_{A_\epsi}$
is relaxed as a sequence of approximate Yang--Mills connections that satisfy
$$
\DD^*_{A_\epsi}\Omega_{A_\epsi}\weakstar0 \qquad \text{ in $\mathcal{D}'$},
$$
beside conditions \eqref{eqn-finite-action-a}--\eqref{eqn-finite-action};
that is, the weak limit is still a weak Yang--Mills connection.

\subsection{Weak immersions of Riemannian manifolds}\label{subsection-immersions-sequences}

We first define a notion of \emph{weak immersion}.
For $p\geq1$, we say that a map $u\,:\,M\rightarrow\R^N$ is a $\W^{2,p}_{\loc}$ immersion if it is both a $\W^{2,p}_{\loc}(M,\R^N)$ map
and a bi-Lipschitz homeomorphism onto its image.

It is clear that, under the above hypothesis,
$u(M)$ is a Lipschitz submanifold of $\R^N$.
If $p\leq n$, $u$ is not $C^1$ in general, so that $u$ may fail to be a differentiable immersion in the classical sense. Moreover, when $u$ is an isometric immersion in the classical sense,
$u$ is necessarily a homeomorphism onto its image
and is also Lipschitz by the isometry condition.

The notion of weak immersions arises naturally in the study of Sobolev immersions whenever the immersions are not be assumed
to be $C^1$ {\it a priori}; see {\it e.g.} \cite{riviere-park,riviere-willmore}.
In the case that $M$ is a two-dimensional surface given as the graph of a $\W^{2,2}$ $\R$-valued function (so that we take $N=3$),
it is known from \cite{toro,muller-sverak} that the manifold induced by the graph admits a bi-Lipschitz parametrization.
The definition given above generalizes this fact; see {\it e.g.} \cite{toro} for the examples of weak immersions of surfaces in $\R^3$ that are not $C^1$.

A weak immersion of a Riemannian manifold $(M,g)$ is isometric if it satisfies
$\langle \dd u,\dd u\rangle=g$ pointwise almost everywhere; this is equivalently written as $u^*\mathfrak{e}=g$. We note that the induced metric $g_{ij}=\langle \partial_iu,\partial_ju\rangle$ is an $\L^\infty_\loc$ function on $M$. Similarly, it is easy to see that, for a local basis of the normal bundle $\{\nu_1,\dots,\nu_{N-n}\}$,
$$
\II^k_{ij}=\langle \partial_{ij}u,\nu_k\rangle,\qquad 1\leq k\leq N-n,\quad 1\leq i,j\leq n,
$$
which are $\L_\loc^p$ functions.

A sequence of immersions $u_\epsi\,:\,M\rightarrow\R^N$ is said to be weakly convergent
in $\W^{2,p}_{\loc}\cap W^{1,\infty}_{\loc}$
if there exists a $\W^{2,p}_{\loc}$ immersion $u$ (as above) such that
$$
u_\epsi\weakstar u \qquad\mbox{in the weak* topology of $\W^{1,\infty}_{\loc}$},
$$
and
$$
\nabla^2u_\epsi\rightharpoonup\nabla^2u \qquad \mbox{weakly in $\L^p_{\loc}$},
$$
which will be written as $u_\epsi\rightharpoonup u$ in $\W^{2,p}_{\loc}\cap \W^{1,\infty}_{\loc}$ from now on. It is straightforward to verify that if $u_\epsi$ converges weakly to an immersion $u$ in $\W^{2,p}$ for $p>2$, then the first fundamental forms of $u_\epsi$ converge in $\L^\infty_\loc$ to that of $u$ (see Appendix \ref{section-very-weak} for a further discussion on this question). We note that the notions defined above for immersions into $\R^N$ may be extended in the usual way to the case where $\R^N$ is replaced by an arbitrary $N$-dimensional Riemannian manifold $(\widetilde{M},\tilde{g})$, by isometrically embedding $(\widetilde{M},\tilde{g})$ into a larger Euclidean space $\R^{\widetilde{N}}$, by Nash's theorem \cite{nash-smooth}.

\subsection{Adapted coframes and the Darboux bundle}\label{section-darboux}\label{section-gcr}

Let $(M,g)$ and $(\widetilde{M},\tilde{g})$ be Riemannian manifolds of dimensions $n$ and $N$ respectively, with $N>n$, and let $u\,:\,M\rightarrow \widetilde{M}$ be a weak isometric immersion, \ie\ a local bi-Lipschitz homeomorphism onto its image.
Before stating our second main theorem, Theorem \ref{thm-main-isometric-immersions},
we recall some basic notions about immersions of Riemannian manifolds.
The point of view adopted here and throughout the paper is that of Cartan's moving frame method; see {\it e.g.} \cite{spivak}.

We recall that the normal bundle is defined as the quotient bundle
$NM=T\widetilde{M}|_{u(M)}/TM$.
By orthogonality, at every point $x\in M$, the fiber $\big(T\widetilde{M}|_{u(M)}\big)_x$ decomposes into an orthogonal sum
\begin{equation*}
\big(T\widetilde{M}|_{u(M)}\big)_x=\big(T\widetilde{M}|_{u(M)}\big)^\top_x\oplus \big(T\widetilde{M}|_{u(M)}\big)^\perp_x\simeq TM_x\oplus NM_x.
\end{equation*}
Additionally, we recall that the normal bundle is independent of the metric on $M$.

Let $\mathscr{F}(T\widetilde{M})$ be the orthonormal
frame bundle of the tangent bundle. We recall that $\mathscr{F}(T\widetilde{M})$ is a principal $\SO(N)$-bundle, where $N=\dim(\widetilde{M})$. Local sections of $\mathscr{F}(T\widetilde{M})$ are local orthonormal frames. Similarly, $\mathscr{F}(T^*\widetilde{M})$ is the coframe bundle. Viewing coframes as lists of covectors, a local coframe $\{\tilde\theta^1,\dots,\tilde\theta^N\}$ is adapted to an immersion $u: M\rightarrow \widetilde{M}$,
if it is orthonormal in $T^*\widetilde{M}$ and (up to reordering) $\{u^*\tilde\theta^1,  \dots, u^*\tilde\theta^n\}$ are an orthonormal coframe on $M$.
In matrix notation, a coframe $\tilde\theta\in\mathscr{F}(T^*\widetilde{M})$ is adapted
if $\theta := u^*\tilde\theta$ is an orthonormal coframe on $M$. Adapted coframes are referred to as Darboux coframes (see also \cite{bbg}).

Let $\tilde\theta$ be a Darboux coframe, and let $\tilde\omega$ be the  associated connection form. Writing as before $\theta=\theta^\top=u^*\tilde\theta$, which is an orthonormal coframe on $M$, let us denote by $\omega$ the connection form associated to $\theta$. We recall that this means that $\theta$ and $\omega$ satisfy the first Cartan equation $\dd\theta=\omega\wedge\theta$ and that if $\theta$ is sufficiently smooth ($\theta\in\W^{1,2}$ is sufficient, see \cite{phd-giron}), such a connection form always exists and is unique. Thus by uniqueness, $\omega = u^*(\tilde\omega)=\omega^\top$, where we write in a local trivialization:
\begin{equation}\label{eqn-darboux-block}
\tilde\omega = \begin{pmatrix}
\omega^\top & \t\omega^{\II}\\
-\omega^{\II} & \omega^\perp
\end{pmatrix}\in\so(N).
\end{equation}
The notation $\omega^\top$ refers to the tangential part of the connection, and $\omega^\perp$ to the normal connection. For $\alpha$ an $n\times m$ matrix, $\t \alpha$ is the transpose matrix, which is an $m\times n$ matrix; the notation $\t \omega^\II$ is the transpose of the second fundamental form part of the connection $\tilde{\omega}$.

Writing out the curvature equations for such coframes, we find that, by definition, $\Omega = \dd\omega + \omega\wedge\omega= \Rm(\theta\wedge\theta)$, where the Riemann curvature tensor is viewed as an endomorphism on two-forms. In components, this yields
\begin{equation*}
	\Omega^i_j=\frac{1}{2}R^i_{jkm}\theta^k\wedge\theta^m.
\end{equation*}
Similarly, $\widetilde{\Omega}=\dd\tilde\omega+\tilde\omega\wedge\tilde\omega=\widetilde{\Rm}(\tilde\theta\wedge\tilde\theta)$.
It is well-known \cite{spivak} that from these two equations one deduces the Gauß--Codazzi--Ricci equations, which is the following system:
\begin{align}
\begin{cases}\label{system-gcr}
\t\omega^\II\wedge\omega^\II = \tau^*\Rm(u^*\theta\wedge u^*\theta)-\widetilde{\Omega}|_{TM\times TM},\\[1mm]
\dd\omega^{\II} = \omega^\top\wedge\omega^{\II} - \t\omega^{\II}\wedge\omega^\top+\widetilde{\Omega}|_{TM\times NM},\\[1mm]
\dd\omega^\perp + \omega^\perp\wedge\omega^\perp = \omega^{\II}\wedge\t\omega^{\II}+\widetilde{\Omega}|_{NM\times NM},
\end{cases}
\end{align}
where $\tau^*\,:\,T^*M\rightarrow T^*\widetilde{M}$ is the pullback of
the restriction map $\tau\,:\,u(M)\subset \widetilde{M}\rightarrow M$. The first equation is the Gauß equation, which relates to the {\it intrinsic} curvature given by the Riemann curvature tensor of $(M,g)$,
and the {\it extrinsic} curvature given by the second fundamental form.
The second equation is the Codazzi equation, and the final one is the Ricci equation, which in spirit is similar to the Gau\ss\ equation in which the {\it normal curvature}
$\dd\omega^\perp+\omega^\perp\wedge\omega^\perp$ is related to the second fundamental form. In general, system \eqref{system-gcr} is a mixed-type system of PDE (see \cite{bbg}).

We recall (see \cite{spivak}) that Cartan's lemma implies that $\omega^\II$ may be expressed in terms of the second fundamental form $\II$ by contracting with the tangent coframe,
\begin{equation}\label{defi-omega-II}
\omega^\II = \II\cdot\theta^\top.
\end{equation}
In index notation,
$$
(\omega^\II)^i_j = \II^i_{jk}\theta^k=\II^i_{kj}\theta^k \qquad\mbox{for $1\leq j,k\leq n=\dim(M)$ and $n+1\leq i\leq N=\dim(\widetilde{M})$}.
$$

For instance, in the case of a manifold $M$ immersed in a codimension-one manifold $\widetilde{M}$,
the equations in \eqref{system-gcr} are a reformulation of the classical Gau\ss--Codazzi equations,
\begin{align*}
&\II(X,Z)\II(Y,W)-\II(X,W)\II(Y,Z)=g(R(X,Y)Z,W)-\tilde{g}(\widetilde{R}(X,Y)Z,W),\\
&\nabla_Y\II(X,Z)-\nabla_X\II(Y,Z)=\widetilde{R}(X,Y)Z,
\end{align*}
while the Ricci equation is trivial. In the above two equations, $X,Y,Z$ and $	W$ are tangent vectors to $M$,
and we view $\widetilde{R}$ as being pulled back on $M$ for simplicity of notation.

We are now ready to present a precise statement of our main theorem for isometric immersions.

\begin{thm}\label{thm-main-isometric-immersions}\label{thm-weak-cty}
	Let $(M,g)$ be a Riemannian manifold.
	Let $u_\epsi$ be a sequence of weak $\W^{2,p}_\loc$ isometric immersions of $(M,g)$ into a Riemannian manifold $(\widetilde{M},\tilde{g})$, and
	let $\II_\epsi\in\L^p_\loc(M,\Hom(TM\times TM,NM))$ be the second fundamental forms associated
	to each $u_\epsi$ and satisfy that,  for any bounded domain $E\subset M$,
	\begin{equation}
	\sup_{\epsi>0}\Vert\II_\epsi\Vert_{\L^p(E)}<\infty \qquad\,\,\mbox{for $p>2$}.
	\end{equation}
	Then, up to a subsequence, $u_\epsi\rightharpoonup u$ weakly in $\W^{2,p}_{\loc}$ such that $u$ is a weak $\W^{2,p}_{\loc}$ isometric immersion
	and any orthonormal coframe on $(M,g)$ may be extended into a Darboux coframe on $(\widetilde{M},\tilde{g})$,
	adapted to $u$ and satisfying the Gau\ss--Codazzi--Ricci equations \eqref{system-gcr} in the sense of distributions.
\end{thm}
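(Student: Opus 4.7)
The plan is first to extract a weakly convergent subsequence. Since each $u_\epsi$ is isometric, $|\dd u_\epsi|$ is locally uniformly bounded by the intrinsic metric, and the covariant Hessian $\nabla(\dd u_\epsi)$ has vanishing tangential part and normal part equal to $\II_\epsi$. This translates the uniform $\L^p_\loc$ bound on $\II_\epsi$ into a uniform bound on $u_\epsi$ in $\W^{2,p}_\loc$. Using Banach--Alaoglu and the Rellich--Kondrachov embedding (with $p>2$, so that the Sobolev exponent $p^\ast>2$), I would extract a subsequence, still denoted $u_\epsi$, with $u_\epsi\rightharpoonup u$ weakly in $\W^{2,p}_\loc$ and $\dd u_\epsi\to \dd u$ strongly in $\L^q_\loc$ for some $q>2$. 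Passing to the limit in the identity $\langle \dd u_\epsi,\dd u_\epsi\rangle = g$ in $\L^{q/2}_\loc$ then yields $u^\ast\tilde g=g$ almost everywhere, so that $u$ is a weak $\W^{2,p}_\loc$ isometric immersion.

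\textbf{Adapted Darboux coframes.} Next, given a prescribed local orthonormal coframe $\theta$ on $(M,g)$, I would extend it to a sequence of Darboux coframes $\tilde\theta_\epsi$ along $u_\epsi$ by completing the tangential part $(u_\epsi)_\ast\theta$ with an orthonormal frame of the normal bundle $NM_\epsi$. The strong convergence of $\dd u_\epsi$ provides strong convergence of the orthogonal projections onto the tangent planes of $u_\epsi(M)$, and compactness of $\SO(N-n)$ permits a choice of normal frame converging in $\L^q_\loc$ to an orthonormal frame of $NM$ adapted to $u$. With respect to $\tilde\theta_\epsi$, the Levi-Civita connection form $\tilde\omega_\epsi$ of $\widetilde M$ along $u_\epsi(M)$ decomposes into the block structure \eqref{eqn-darboux-block}: the tangential block $\omega^\top$ is determined by $(M,g,\theta)$ alone, hence $\epsi$-independent; the mixed block $\omega_\epsi^\II=\II_\epsi\cdot\theta$ is uniformly bounded in $\L^p_\loc$ by hypothesis; and the normal block $\omega_\epsi^\perp$ is also uniformly bounded in $\L^p_\loc$, by virtue of the transformation law for connection forms under the gauge change from a fixed smooth frame of $T\widetilde M$ to $\tilde\theta_\epsi$, combined with the $\W^{1,p}_\loc$ bound on that gauge change.

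\textbf{Passage to the limit in the GCR system.} Since $\widetilde{\Omega}_\epsi = u_\epsi^\ast\widetilde{\Rm}$ converges strongly in $\L^{q/2}_\loc$ as a consequence of the strong convergence of $u_\epsi$ and $\dd u_\epsi$, the inhomogeneous terms of \eqref{system-gcr} pass to the limit in a straightforward manner. The analytical core of the theorem therefore lies in the quadratic wedge products $\t\omega_\epsi^\II\wedge\omega_\epsi^\II$, $\omega_\epsi^\perp\wedge\omega_\epsi^\perp$, and the mixed terms $\omega^\top\wedge\omega_\epsi^\II$ and $\t\omega_\epsi^\II\wedge\omega^\top$ of the Codazzi equation, where both factors converge only weakly in $\L^p_\loc$. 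These products are precisely of the form treated in the proof of Theorem \ref{thm-yang-mills}: the Codazzi and Ricci equations themselves express $\dd\omega_\epsi^\II$ and $\dd\omega_\epsi^\perp$ as sums of the strongly convergent term $\widetilde{\Omega}_\epsi$ and bilinear terms of the same type, supplying the ``curl'' control of the exterior derivatives modulo the quadratic terms under study. Combined with the $\L^p_\loc$ bound for $p>2$, the compensated compactness machinery recalled in Appendix \ref{appendix-div-curl} will then ensure that each wedge product converges in the sense of distributions to the wedge of the weak limits, completing the passage to the limit.

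\textbf{Main obstacle.} The principal difficulty is twofold. First, the selection of Darboux coframes requires care, since the normal bundle $NM_\epsi$ depends non-locally on $u_\epsi$ through the orthogonality condition, and one must ensure that the chosen normal frame converges strongly enough to pair with the weak $\L^p_\loc$ convergence of $\omega_\epsi^\II$ and $\omega_\epsi^\perp$. Second, and more fundamentally, the div-curl argument must be applied simultaneously to the three equations of \eqref{system-gcr}, since each equation contributes to the control of the exterior derivatives needed in the next; the disentanglement of this coupling is the genuine analytical hurdle. I expect to resolve it by observing that the full Gau\ss--Codazzi--Ricci system is nothing but the block decomposition of a single Cartan curvature equation for the pulled-back Levi-Civita connection on $T\widetilde M$, to which the methodology developed for Theorem \ref{thm-yang-mills} applies directly.
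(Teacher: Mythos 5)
Your proposal follows essentially the same route as the paper: you construct Darboux coframes along $u_\epsi$ with uniform $\W^{1,p}_\loc\cap\L^\infty_\loc$ control (the paper's Lemmas \ref{lem-estimate-tau}--\ref{lem-generating-darboux} supply exactly the frame/gauge bounds you assume), deduce a uniform $\L^p_\loc$ bound on the full $\so(N)$-valued connection form, pass to the limit in the single Cartan structure equation via the weak continuity of curvature (Proposition \ref{thm-main-chapter-connections}), and recover the Gau\ss{} equation from the identification of the tangential block with the intrinsic connection form (the paper's uniqueness argument in Step 4). Your closing observation---that the Gau\ss--Codazzi--Ricci system is nothing but the block decomposition of one Cartan equation---is precisely how the paper resolves the coupling you flag as the main obstacle, so the plan is sound and matches the paper's proof.
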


We remark that the Cartan equations are always verified by a sufficiently
regular coframe ($\theta\in\W^{1,2}$ is sufficient, see \eg \cite{phd-giron});
the point of the theorem is to construct the Darboux coframes along the sequence $u_\epsi$ so as to pass to limits
in the non-linear term
of the Gau\ss\, equation, which depends only on the intrinsic geometry of $M$.
Thus it represents a higher-order obstruction to the existence of isometric immersions.

	This result is related to the convergence theorems
	for Sobolev immersions with bounded second fundamental form and volume in {\rm \cite{langer,breuning}}, whose main result is the following:
	consider a sequence $u_\epsi$ of $\W^{2,p}$ immersions with $p>n$ from closed Riemannian manifolds $M_\epsi$
	into $(\R^N,\euc)$ such that
	\begin{align}
	\sup_{\epsi>0}\Vert\II_\epsi\Vert_{\L^p(M_\epsi)}+ \sup_{\epsi>0}\Vol(M_\epsi)<\infty, \label{langer-volume}
	\end{align}
	fixing a common point $q\in u_\epsi(M_\epsi)\subset\R^N$.
	Then there exist a manifold $M$, $C^1$ diffeomorphisms $\Psi_\epsi\,:\,M\rightarrow M_\epsi$, and a $\W^{2,p}$ immersion $u$ such that,
	up to a subsequence, $u_\epsi\circ\Psi_\epsi\rightharpoonup u$ in the $C^1$--topology.
	As an immediate consequence, the metric sequence $(u_\epsi\circ\Psi_\epsi)^*\euc$ induced by the Euclidean structure
	on $M$ converges uniformly.
	
	In our context, if the manifold is closed,
	given a sequence of isometric $\W^{2,p}$ immersions,
	the isometry condition clearly implies volume constancy.
	Thus, when $p>n$, the $C^1$--convergence of the immersions is clear from the main theorem of {\rm \cite{breuning}}{\rm ;}
	so is the fact that the limiting immersion is isometric.
	Therefore, the key point of Theorem { \ref{thm-weak-cty}}
	is about the convergence of \emph{curvatures} along sequences of isometric immersions for $p>2$ {\rm (}$p>n$ is not necessary{\rm )}.

\subsection{Div-curl structure}\label{subsection-method}
The PDEs involved present a div-curl structure
with respect to $\mathbf{Ad}(P)$-valued differential forms representing connections.
Our contribution is that this structure alone is sufficient to pass to limits in the non-linear terms of the equations considered. More precisely, our approach adopts the point of view of \cite{div-curl-rrt}, which showed that  differential forms
with suitably controlled exterior differentials satisfy some compensation properties.
We prove that, for a uniform bounded sequence of connections in $\L^p_\loc$, the curvature tensor passes to the limit
in the sense of distributions. This is the content of the next proposition, which is of independent interest.

\begin{prop}[Weak continuity of $\Omega$]\label{thm-main-chapter-connections}
	Let $(M,g)$ be a Riemannian manifold, let $P$ be a principal $G$-bundle over $M$ with $G$ as a matrix group,
	and let $\widetilde{\nabla}\in\mathcal{A}(P)$ be a
	reference connection.
	Assume that $\nabla_{A_\epsi}=\widetilde\nabla+A_\epsi$ is a sequence of connections on $P$, and
	$\Omega_{A_\epsi}$ is the associated sequence of curvature forms
	such that, for any bounded domain $E\subset M$,
	\begin{align*}
	&\sup_{\epsi>0}\Vert A_\epsi\Vert_{\L^p(E)}<\infty\qquad \mbox{for $p>2$},\\
	&\sup_{\epsi>0}\Vert \Omega_\epsi\Vert_{\mathcal{M}(E)}<\infty.
	\end{align*}
	Let $\nabla_A=\widetilde{\nabla}+A$ be an $\L^p_\loc$ connection such that $A_\epsi\rightharpoonup A$ in $\L^p_{\loc}$
	up to a non-relabelled subsequence.
	Then $\Omega_{A_\epsi}\weakstar \Omega_A$ in the sense of distributions:
	for any compactly supported $\phi\in\Lambda^2(M,\mathbf{Ad}(P))$,
	\begin{equation}
	\lim_{\epsi\rightarrow0}\int_M \big(\langle A_\epsi,\delta\phi\rangle + \frac{1}{2}\langle [A_\epsi\wedge A_\epsi],\phi\rangle\big)
	=\int_M \big(\langle A,\delta\phi\rangle + \frac{1}{2}\langle [A\wedge A],\phi\rangle\big).
	\end{equation}
\end{prop}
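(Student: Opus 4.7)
The strategy is to isolate the non-linear interaction in the curvature via a local trivialization in which $\Omega_{A_\epsi} = \dd A_\epsi + \tfrac{1}{2}[A_\epsi \wedge A_\epsi]$. After integrating the exterior derivative against the compactly supported test form $\phi$, the linear piece $\int_M \langle A_\epsi, \delta\phi\rangle$ passes to $\int_M \langle A, \delta\phi\rangle$ immediately from $A_\epsi \rightharpoonup A$ in $\L^p_\loc$, since $\delta\phi$ lies in $\L^{p'}$ with compact support. The whole content of the proposition therefore lies in the weak stability of the bracket $[A_\epsi \wedge A_\epsi]$, a quadratic expression which is not controlled by weak $\L^p$ convergence alone. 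The plan is to extract a div-curl compensation from the hypothesis that $\Omega_{A_\epsi}$ is uniformly bounded in the space of measures.

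The first step is to upgrade the curvature bound into a compactness statement for $\dd A_\epsi$ in a negative Sobolev space. Rewriting
\[
\dd A_\epsi = \Omega_{A_\epsi} - \tfrac{1}{2}[A_\epsi \wedge A_\epsi],
\]
the first term is bounded in $\M_\loc$, which embeds compactly into $\W^{-1,s}_\loc$ for any $s<n/(n-1)$ by duality with the compact Morrey embedding $\W^{1,s'}_0 \hookrightarrow C^0$ available for $s'>n$. The second term is pointwise bounded by $|A_\epsi|^2$ thanks to the rescaling $|[\xi,\zeta]|\leq |\xi||\zeta|$, and so it lies in $\L^{p/2}_\loc$; since $p>2$ forces $p/2>1$, it is precompact in $\W^{-1,p/2}_\loc$ by Rellich--Kondrachov. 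Combining, $\dd A_\epsi$ is precompact in $\W^{-1,r}_\loc$ for some $r>1$; this is precisely the quantitative role played by the hypothesis $p>2$.

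The second step is the div-curl argument. Decomposing $A_\epsi = A + r_\epsi$ with $r_\epsi \rightharpoonup 0$ in $\L^p_\loc$ and $\dd r_\epsi$ precompact in $\W^{-1,r}_\loc$, the expansion
\[
[A_\epsi \wedge A_\epsi] = [A\wedge A] + [A\wedge r_\epsi] + [r_\epsi\wedge A] + [r_\epsi\wedge r_\epsi]
\]
reduces the problem to the last term, since the two cross terms vanish in the distributional sense by weak $\L^p$ convergence paired against the fixed form $A$ multiplied by $\phi$. The remaining self-bracket $[r_\epsi\wedge r_\epsi]$ is exactly the type of quadratic quantity governed by the div-curl lemma of Appendix~\ref{appendix-div-curl}: two copies of a sequence weakly convergent in $\L^p$ whose exterior derivatives are precompact in a negative Sobolev space, with the H\"older pairing $1/p+1/p<1$ guaranteed by $p>2$. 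The structure constants of $\mathfrak{g}$ acting on the fibers of $\mathbf{Ad}(P)$ are bounded linear operations that commute with distributional limits, so the scalar-form statement transfers immediately to the $\mathbf{Ad}(P)$-valued setting.

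The main obstacle is the exponent bookkeeping near the critical threshold: the measure bound on $\Omega_\epsi$ only yields precompactness of $\dd A_\epsi$ in $\W^{-1,r}_\loc$ for $r$ close to $1$ and never in $\L^1_\loc$, while the bracket lies only in $\L^{p/2}$ which, for $p$ barely above $2$, is only marginally better than $\L^1$; both ingredients must then be matched simultaneously against $A_\epsi\in\L^p_\loc$ inside the hypotheses of the div-curl lemma. The condition $p>2$ is sharp for this approach, which is why the endpoint $p=2$ is exceptional and treated separately elsewhere in the paper.
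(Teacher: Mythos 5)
Your route is the same as the paper's: trivialize locally so that $\Omega_{A_\epsi}=\dd A_\epsi+\tfrac12[A_\epsi\wedge A_\epsi]$, dispose of the linear term by weak convergence, rewrite $\dd A_\epsi=\Omega_{A_\epsi}-\tfrac12[A_\epsi\wedge A_\epsi]$ to extract compactness in a negative Sobolev norm, and feed the quadratic bracket into the div-curl machinery (your splitting $A_\epsi=A+r_\epsi$ is only a cosmetic variant of applying the lemma to $[A_\epsi\wedge A_\epsi]$ directly). The genuine gap is the final exponent matching, which you yourself flag as ``the main obstacle'' but never resolve. What you actually establish is that $\dd A_\epsi$ is precompact in $\W^{-1,r}_{\loc}$ only for some $r>1$ close to $1$ (limited by the measure bound, $r<\tfrac{n}{n-1}$, and by the $\L^{p/2}$ bound on the bracket). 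But Lemma \ref{lem-div-curl-lie-algebra}, and the appendix Lemma \ref{lem-div-curl-step} behind it, require compact containment of $\dd A^i_\epsi$ in $\W^{-1,p_i}_{\loc}$ with the \emph{same} exponents $p_i$ in which the factors converge weakly and $\sum_i 1/p_i=1$; taking $p_1=p_2=2$ (legitimate since $\L^p_{\loc}\subset\L^2_{\loc}$ on bounded sets for $p>2$), you would have to exhibit $\dd A_\epsi\Subset\W^{-1,2}_{\loc}$. Compactness in $\W^{-1,r}$ with $r$ barely above $1$ does not verify this hypothesis, and the Hodge-decomposition proof genuinely uses it: the remainder $\rho_\epsi$ in Lemma \ref{lem-hodge-decomp} converges strongly only in the space where $\dd A_\epsi$ is compact, so with $r\approx1$ the strong-weak products are no longer controlled. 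The inequality $1/p+1/p<1$ by itself does not repair this.

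The missing step---and the one the paper supplies---is an interpolation: since $A_\epsi$ is uniformly bounded in $\L^p(E)$, $\dd A_\epsi$ is \emph{bounded} in $\W^{-1,p}(E)$; combined with its compactness in $\W^{-1,q}(E)$ for $q<\tfrac{n}{n-1}\leq 2<p$, interpolation in the $\W^{-1,\cdot}$ scale yields $\dd A_\epsi\Subset\W^{-1,2}(E)$, and only then do the hypotheses of Lemma \ref{lem-div-curl-lie-algebra} hold with $p_1=p_2=2$. This is precisely where $p>2$ enters: through the $\W^{-1,p}$ boundedness that makes the interpolation land at the exponent $2$. A second, minor, omission is globalization: the proposition is stated on $M$, whereas your argument lives in one trivialization with $\widetilde\nabla=\dd$; one should cover the support of $\phi$ by finitely many trivializing sets and sum with a partition of unity, as in Step 2 of the paper's proof. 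With the interpolation step inserted and the patching added, your argument coincides with the paper's.
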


Proposition \ref{thm-main-chapter-connections} follows from a general div-curl lemma for differential forms (see Lemma \ref{lem-div-curl-lie-algebra} below)
dealing with Lie-algebra-valued forms; this extends the result in \cite{div-curl-rrt} which is only for scalar-valued forms.

To prove Theorems \ref{thm-yang-mills} and \ref{thm-main-isometric-immersions}, we combine this argument with the special structure of the equations:
in the case of the Yang--Mills equations,
we use the Bianchi identity:
\begin{equation*}
\DD_A\Omega_A=0.
\end{equation*}
For the Gau\ss--Codazzi--Ricci system, the same reasonning applies; the main difficulty in the proof of Theorem \ref{thm-main-isometric-immersions} consists in checking the validity
of the Gau\ss--Codazzi--Ricci equations in the limit, given only that
the second fundamental form sequence is uniformly bounded in $\L^p_\loc$ {\it a priori}, whereas Proposition \ref{thm-main-chapter-connections} requires the whole connection form to be controlled in $\L^p$. When $p=2$, an improvement of Proposition \ref{thm-main-chapter-connections} is required and is detailed in \S \ref{section-generalisation}.

\section{Weak Continuity of the Yang--Mills Equations: Proof of Theorem \ref{thm-yang-mills}}\label{section-yang-mills}

The main goal of this section is to prove Proposition \ref{thm-main-chapter-connections} and Theorem \ref{thm-yang-mills}. We first establish a general div-curl lemma, Lemma \ref{lem-div-curl-lie-algebra}, for $\mathfrak{g}$-valued differential forms, from which we deduce both aforementioned results. Lemma \ref{lem-div-curl-lie-algebra} extends the result of \cite{div-curl-rrt} to the context of $\mathfrak{g}$-valued forms.
We emphasize that the algebraic properties of $\mathfrak{g}$-valued differential forms here differ
from scalar-valued differential forms considered in \cite{div-curl-rrt}.
For example, for a scalar-valued differential form $\omega$, the alternating property implies that
$\omega\wedge\omega=0$. However, this is not true for $\mathfrak{g}$-valued differential forms unless the underlying Lie group $G$ is Abelian.
Thus, a proof of the general div-curl lemma needed for our subsequent development is provided.
Some well-known arguments dealing with  Hodge theory in the general context of elliptic complexes have been
relegated to an appendix for the sake of providing a complete presentation (see Appendix \ref{appendix-div-curl}).

\subsection{Div-curl lemma for $\mathfrak{g}$-valued differential forms}
We consider the complex of $\mathfrak{g}$-valued differential forms:
\begin{equation}
E(i)=\Gamma(M,\mathfrak{g}\otimes\wedge^i T^*M),
\end{equation}
and the differential given by $\DD(i)=\dd$, the exterior differential,
where $\mathfrak{g}$ is equipped with both the Lie bracket (which makes $E(i)$ into a non-associative bundle) and an inner product.

\begin{lem}\label{lem-div-curl-lie-algebra}
	Let $(M,g)$ be a
	Riemannian manifold with $n=\dim M$, let $1\leq \mu_i\leq n$ and $1< p_i< \infty$ such that
	\begin{align*}
	\sum_{i=1}^k \frac{1}{p_i}=1,\qquad \sum_{i=1}^k \mu_i=:s\leq n,
	\end{align*}
	and let $\mathfrak{g}$ be a Lie algebra.
	Assume that $A^i_\epsi\in\L^{p_i}_{\loc}(M,\mathfrak{g}\otimes \wedge^{\mu_i}T^*M)$ is a sequence
	of differential forms such that
	\begin{align}
	&A_\epsi^i\rightharpoonup A^i\qquad\text{weakly in $\L^{p_i}_\loc$ as $\epsi\rightarrow0$},\label{eqn-div-curl-compact-confinement-a}\\
	&\dd A_\epsi^i\Subset \W^{-1,p_i}_{\loc}.\label{eqn-div-curl-compact-confinement}
	\end{align}
	Then, for an arbitrary smooth, compactly supported test form $\phi\in\Gamma(M,\mathfrak{g}\otimes \wedge^{n-s}T^*M)$,
	\begin{equation}\label{3.4a}
	\lim_{\epsi\rightarrow0} \int_M\langle  [A_\epsi^1\wedge[\dots\wedge A_\epsi^k]\cdots], \phi\rangle
	= \int_M\langle [ A^1\wedge[\dots\wedge A^k]\cdots],\phi\rangle.
	\end{equation}
\end{lem}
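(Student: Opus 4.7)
The strategy is to reduce the Lie-algebra-valued statement to the scalar-valued div-curl lemma of \cite{div-curl-rrt} by exploiting the multilinearity of the iterated bracket.

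First I would fix an orthonormal basis $\{e_a\}_{a=1}^{\dim\mathfrak{g}}$ of $\mathfrak{g}$ and decompose each form componentwise as $A^i_\epsi = \sum_a A^{i,a}_\epsi\otimes e_a$, with each $A^{i,a}_\epsi$ a scalar-valued $\mu_i$-form. Since projection onto each component of $\mathfrak{g}$ is continuous, the weak convergence \eqref{eqn-div-curl-compact-confinement-a} transfers to $A^{i,a}_\epsi\rightharpoonup A^{i,a}$ in $\L^{p_i}_\loc$, and the compact confinement \eqref{eqn-div-curl-compact-confinement} yields $\dd A^{i,a}_\epsi\Subset\W^{-1,p_i}_\loc$ for every $a$.

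Next, by bilinearity of the Lie bracket and of the wedge product, I would expand the iterated product as
\begin{equation*}
[A^1_\epsi\wedge[\cdots\wedge A^k_\epsi]\cdots] = \sum_{a_1,\ldots,a_k}\bigl(A^{1,a_1}_\epsi\wedge\cdots\wedge A^{k,a_k}_\epsi\bigr)\otimes [e_{a_1},[\cdots,e_{a_k}]\cdots].
\end{equation*}
Pairing with an arbitrary test form $\phi = \sum_b \phi^b\otimes e_b$ through the compatible inner product then expresses the left-hand side of \eqref{3.4a} as a finite linear combination, with coefficients built from the iterated structure constants of $\mathfrak{g}$, of scalar pairings of the form $\int_M\bigl(A^{1,a_1}_\epsi\wedge\cdots\wedge A^{k,a_k}_\epsi\bigr)\wedge\ast\phi^b$.

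Then I would invoke the scalar-valued div-curl lemma of \cite{div-curl-rrt}, recalled in Appendix \ref{appendix-div-curl}, separately on each such summand: for every fixed multi-index, the scalar forms $A^{i,a_i}_\epsi$ satisfy the hypotheses of weak $\L^{p_i}_\loc$ convergence together with compactness of the exterior derivative in $\W^{-1,p_i}_\loc$, so their wedge product converges to the product of the weak limits in the sense of distributions. Summing over the finitely many multi-indices yields \eqref{3.4a}.

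The only genuine analytic difficulty, namely the local Hodge-theoretic argument compensating for the one-derivative loss implicit in the hypothesis on $\dd A^i_\epsi$, is already contained in the scalar case. The passage from scalar-valued to $\mathfrak{g}$-valued forms is purely algebraic, since the Lie bracket contributes only bounded constant factors to a finite sum and therefore does not interact with the analytic hypotheses. The only point to be slightly careful about is that the scalar div-curl result is invoked on a general Riemannian manifold $M$, which I would handle by localising via a partition of unity subordinate to coordinate charts, exploiting the compact support of $\phi$ to reduce to the Euclidean setting.
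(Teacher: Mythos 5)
Your argument is correct, but it takes a genuinely different route from the paper. You reduce the $\mathfrak{g}$-valued statement to the scalar result of \cite{div-curl-rrt} by expanding in a fixed orthonormal basis of $\mathfrak{g}$: by multilinearity, $[A^1_\epsi\wedge[\cdots\wedge A^k_\epsi]\cdots]=\sum_{a_1,\dots,a_k}(A^{1,a_1}_\epsi\wedge\cdots\wedge A^{k,a_k}_\epsi)\otimes[e_{a_1},[\cdots,e_{a_k}]\cdots]$, the hypotheses \eqref{eqn-div-curl-compact-confinement-a}--\eqref{eqn-div-curl-compact-confinement} pass to each scalar component since projection commutes with $\dd$, and the structure constants only contribute fixed finite coefficients; a partition of unity handles the manifold setting. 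The paper instead reproves the result directly for $\mathfrak{g}$-valued forms: it applies the Hodge decomposition (Lemma \ref{lem-hodge-decomp}) to each $A^i_\epsi$, isolates the term $[\dd\Psi^1_\epsi\wedge[\cdots\wedge\dd\Psi^k_\epsi]\cdots]$, integrates by parts using Stokes' theorem together with the ad-invariance of the inner product and the Jacobi identity, and closes the argument by induction on $k$. What your route buys is modularity and a clarification of the paper's own remark that $[A\wedge A]\neq 0$ for non-Abelian $\mathfrak{g}$: in the basis expansion the diagonal terms vanish because $[e_a,e_a]=0$, and the off-diagonal terms are wedge products of \emph{distinct} scalar sequences, so the scalar lemma applies without modification and no invariance of the inner product is needed. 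What the paper's route buys is self-containedness and a basis-free, intrinsic argument: it does not rely on quoting the full $k$-fold alternating-product theorem of \cite{div-curl-rrt} (whose bilinear case is rederived in Appendix \ref{appendix-div-curl}), and the same Hodge-theoretic mechanism is then reused for Proposition \ref{thm-main-chapter-connections} and Lemma \ref{lem-div-curl-equi}. One cosmetic point: in your scalar pairing the expression $(A^{1,a_1}_\epsi\wedge\cdots\wedge A^{k,a_k}_\epsi)\wedge *\phi^b$ has degree $2s$ rather than $n$; it should read $(A^{1,a_1}_\epsi\wedge\cdots\wedge A^{k,a_k}_\epsi)\wedge\phi^b$, or equivalently the pointwise inner product with $*\phi^b$ against the volume form. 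This does not affect the substance of the reduction.
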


\begin{proof} The proof is based on an induction argument. There is nothing to prove when $k=1$.

	\noindent
	\textbf{1.} We now prove \eqref{3.4a} for $k=2$.
	Let $A^1_\epsi$ and $A^2_\epsi$ be $\mathfrak{g}$-valued forms of degree $\mu_1$ and $\mu_2$, respectively.
	The complex $E(i)=\Gamma(M,\mathfrak{g}\otimes\wedge^iT^*M)$, equipped with the exterior differentiation $\dd$, is an elliptic complex.
	Thus, by Lemma \ref{lem-hodge-decomp}, for $p_1,p_2\in (1,\infty)$,
	for each $A_\epsi^i$ satisfying
	conditions \eqref{eqn-div-curl-compact-confinement-a}--\eqref{eqn-div-curl-compact-confinement},
	there exist $\Psi^i_\epsi$ and $\rho_\epsi^i$ such that
	\begin{align*}
	&A_\epsi^i = \dd\Psi_\epsi^i + \rho_\epsi^i,\\
	&\Psi^i_\epsi\rightarrow\Psi^i &\text{ in } \L^{p_i}_{\loc}\\
	&\dd\Psi^i_\epsi \rightharpoonup \dd\Psi^i &\text{ in } \L^{p_i}_{\loc}\\
	&\rho_\epsi^i \rightarrow\rho^i  &\text{ in } \L^{p_i}_{\loc}.
	\end{align*}
	Thus, as in the proof of Lemma \ref{lem-div-curl-step},
	every product $[A_\epsi^i\wedge A_\epsi^j]$ can be decomposed as the sum of a product
	of form $[\dd\Psi_\epsi^i\wedge \dd\Psi_\epsi^j]$, a product of a weakly-strongly converging sequence
	of form $[\dd\Psi_\epsi^i\wedge \rho_\epsi^j]$ (that converges in $\L^1_\loc$), and
	a strongly convergent product $[\rho_\epsi^i\wedge \rho_\epsi^j]$.
	Therefore, the only term we have to deal with is $[\dd\Psi_\epsi^1\wedge \dd\Psi_\epsi^2]$.
	
	Integrating by parts, we obtain that,
	for an arbitrary smooth, compactly supported test form $\phi\in\Gamma(M,\mathfrak{g}\otimes \wedge^{n-s}T^*M)$,
	\begin{align*}
	\int_M \langle [\dd\Psi_\epsi^1\wedge \dd\Psi_\epsi^2],\phi\rangle
	&= \int_M \langle \dd\Psi_\epsi^1,*[\dd\Psi_\epsi^2\wedge *\phi]\rangle
	= \int_M \langle \Psi_\epsi^1, \delta*[\dd\Psi_\epsi^2\wedge *\phi]\rangle\\
	&= \int_M \langle \Psi_\epsi^1,*\dd[\dd\Psi_\epsi^2\wedge *\phi]\rangle
	= \int_M \langle [\Psi_\epsi^1\wedge \dd\Psi_\epsi^2], \delta\phi\rangle,
	\end{align*}
	by virtue of Stokes' theorem and Jacobi's identity,
	where $\delta$ is the co-differential on $M$.
	Using that $\Psi^1_\epsi$ converges strongly to $\Psi^1$ in $\L^{p_1}_\loc$,
	we conclude that,
	\begin{align*}
	\int_M \langle [\Psi_\epsi^1\wedge \dd\Psi_\epsi^2], \delta\phi\rangle
	\rightarrow -\int_M\langle [\Psi^1\wedge \dd\Psi^2],\delta\phi\rangle
	= \int_M \langle [\dd\Psi^1\wedge \dd\Psi^2],\phi\rangle \qquad\mbox{as $\epsi\rightarrow0$},
	\end{align*}
	
	\medskip		
	\noindent
	\textbf{2}.
	We now deal with the general case.
	For $\mathfrak{g}$-valued forms $A^i, i=1,\dots,k$, each of degree $\mu_i$,
	assume by induction that the statement holds for the product of $k-1$ forms.
	By Lemma \ref{lem-hodge-decomp}, we may write $A^i_\epsi=\dd\Psi_\epsi^i + \rho^i_\epsi$ for all $i=1,\dots,k$.
	Thus, the product $[A_\epsi^1\wedge [\dots\wedge A_\epsi^k]\cdots]$ may be expressed as a sum of products
	involving $\dd\Psi_\epsi^i$ and $\rho_\epsi^i$.
	In this sum, the only term we have to treat is
	the product, $Q^\epsi:=[\dd\Psi_\epsi^1 \wedge [\dots\wedge \dd\Psi_\epsi^k]\cdots]$;
	all the other terms are treated by the induction hypothesis,
	the Banach--Alaoglu theorem, and the uniqueness of weak limits.
	
	Note that, for each fixed $\epsi$, the form $Q_\epsi$
	is closed,
	{\it i.e.}, $\dd Q_\epsi=0$.
	Integrating by parts, we obtain that,
	for an arbitrary smooth, compactly supported test form $\phi\in\Gamma(M,\mathfrak{g}\otimes \wedge^{n-s}T^*M)$,
	\begin{align*}
	\int_M \langle Q_\epsi, \phi\rangle
	&= \int_M \langle \dd\Psi_\epsi^1,*[\dd\Psi_\epsi^2\wedge [\dots\wedge \dd\Psi_\epsi^k]\cdots]\wedge *\phi]\rangle\\
	&= \int_M \langle [\Psi_\epsi^1\wedge [\dd\Psi_\epsi^2 \wedge [\dots\wedge \dd\Psi_\epsi^k]\cdots]], \delta\phi\rangle\\
	&\rightarrow -\int_M \langle [\Psi^1\wedge [\dd\Psi^2 \wedge [\dots\wedge \dd\Psi^k]\cdots]], \delta\phi\rangle\\
	&=\int_M \langle [\dd\Psi^1\wedge [\dd\Psi^2 \wedge [\dots\wedge \dd\Psi^k]\cdots]], \phi\rangle
	\end{align*}
	by virtue of Stokes' theorem and Jacobi's identity.
	
	By induction, we complete the proof. %
\end{proof}

\subsection{Weak continuity of $\Omega$ (proof of Proposition \ref{thm-main-chapter-connections})}	
Recall that, for two connections $\widetilde{\nabla}$ and $\nabla_A=\widetilde{\nabla}+A$, their curvatures are related by
\begin{equation*}
\Omega_A=\Omega_{\widetilde{\nabla}} + \widetilde{\nabla} A +\frac{1}{2}[A\wedge A].
\end{equation*}
Therefore, given a sequence of connections $\nabla_{A_\epsi}=\widetilde{\nabla}+A_\epsi$, verifying the convergence
of the sequence of the associated curvatures in the sense of distributions
amounts to showing the convergence of the quadratic term $[A_\epsi\wedge A_\epsi]$.

Let us remark that the complex formed by
\begin{equation}\label{eqn-ad-p-complex}
E(i)=\mathbf{Ad}(P)\otimes\wedge^iT^*M,\qquad \DD(i)=\widetilde{\nabla}.
\end{equation}
is not exact, unless $\widetilde{\nabla}\circ\widetilde{\nabla}=0$, {\it i.e.}, ${\Omega_{\widetilde{\nabla}}=0}$.
This in turns implies that $E(i)$ are trivial bundles. Thus one cannot apply directly Lemma \ref{lem-div-curl-lie-algebra} to the complex \eqref{eqn-ad-p-complex}; instead, a localization argument is necessary.

\medskip
\noindent

\begin{proof}[\bf Proof of Proposition {\rm \ref{thm-main-chapter-connections}}]
	We divide the proof into two steps.
	
	\noindent
	\textbf{1.} We first consider the local case on a ball $B\subset M$.
	Fix a local trivialization of bundle $P$ so that $P\simeq G\times B$ locally.
	The parameters of this trivialization induce a trivialization of $\mathbf{Ad}(P)$ as $\mathfrak{g}\times \R^n$,
	for the Lie algebra $\mathfrak{g}$ associated to the Lie group $G$ (recall that $P$ is a principal $G$-bundle).
	In this trivialization, we consider the trivial connection $\widetilde\nabla=\dd$,
	which acts by the exterior differentiation.
	Then the complex formed by $E(i)=\Gamma(B,\mathfrak{g}\otimes\wedge^i\R^n)$ and $\DD(i)=\dd$ is exact and elliptic.
	Moreover, the trivial bundle $\mathfrak{g}\times\wedge^i\R^n$ is an algebra bundle with internal multiplication
	given by the Lie bracket.
	
	With respect to $\widetilde\nabla$,
	we can write $\nabla_{A_\epsi}|_{B} = \dd+\tilde{A}_\epsi$
	and $\nabla_A|_{B}=\dd+\tilde{A}$.
	Since $A_\epsi\rightharpoonup A$ weakly in $\L^p_\loc$, then
	$$
	\tilde{A}_\epsi\rightharpoonup\tilde{A} \qquad \mbox{in $\L^p_\loc$}.
	$$
	Moreover, in this trivialization, we have
	\begin{align*}
	\Omega_{A_\epsi} &= \Omega_{\widetilde{\nabla}} + \dd\tilde{A}_\epsi + \frac{1}{2}[\tilde{A}_\epsi\wedge \tilde{A}_\epsi]
	= \dd\tilde{A}_\epsi + \frac{1}{2}[\tilde{A}_\epsi\wedge \tilde{A}_\epsi].
	\end{align*}
	
	It is clear that
	$$
	\dd\tilde{A}_\epsi\rightharpoonup
	\dd\tilde{A}\qquad\,\,\mbox{ in the sense of distributions.}
	$$
	Since $\dd\tilde{A}_\epsi= \Omega_\epsi-\frac{1}{2}[\tilde{A}_\epsi \wedge \tilde{A}_\epsi ]$,
	$\dd\tilde{A}_\epsi $ is in a bounded subset of Radon measures,
	which compactly embeds into $\W^{-1,q}_\loc$ for $q<1^*=\frac{n}{n-1}$:
	$$
	\dd\tilde{A}_\epsi \qquad \mbox{is compact in $\W^{-1,q}_{\loc}$ for $q<\frac{n}{n-1}$}.
	$$
	On the other hand, $\tilde{A}_\epsi $ is uniformly bounded in $\L^p$ so that
	$$
	\dd\tilde{A}_\epsi \qquad \mbox{is bounded in $\W^{-1,p}_{\loc}$}.
	$$
	By interpolation, as $q<\frac{n}{n-1}\leq2<p$, we obtain
	$$
	\dd\tilde{A}_\epsi \Subset\W^{-1,2}_{\loc}.
	$$
	By Lemma \ref{lem-div-curl-lie-algebra},
	we conclude that
	$$
	[\tilde{A}_\epsi\wedge\tilde{A}_\epsi]\rightharpoonup
	[\tilde{A}\wedge\tilde{A}]\qquad\,\,\mbox{in the sense of distributions.}
	$$
	
	Therefore, for a compactly supported $\phi\in\Gamma(B,\mathbf{Ad}(P)\otimes\wedge^2T^*M)$, we have
	\begin{align*}
	\int_M \langle \Omega_{A_\epsi},\phi\rangle = \int_B\langle \Omega_{A_\epsi},\phi\rangle
	&
	= -\int_B \big(\langle \tilde{A}_\epsi,\delta\phi\rangle +\frac{1}{2} \langle [\tilde{A}_\epsi\wedge\tilde{A}_\epsi],\phi\rangle\big)
	\\&
	\rightarrow \int_B \langle \Omega_A,\phi\rangle = \int_M \langle \Omega_A,\phi\rangle.
	\end{align*}
	
	\medskip		
	\noindent
	\textbf{2}. We now globalize the argument.
	To do this, we fix a bounded domain $E\subset M$ which we cover by local trivializable subsets $B^i$,
	and consider trivializations $P|_{B^i}$ on $B^i$;
	similarly for bundle $\mathbf{Ad}(P)\otimes T^*M$.
	By compactness, we can extract a finite subcover of local trivializations. The local argument applies on each of them.
	
	Let $\{\chi^i\}_{i=1}^m$  be a partition of unity adapted to the local trivializations
	so that $\sum_{i=1}^m\chi^i=1$ and $\supp\chi^i\subset B^i$.
	For a smooth $\phi\in\Lambda^2(M,\mathbf{Ad}(P))$ compactly supported in $E$,
	we have
	\begin{align*}
	\int_M \langle \Omega_{A_\epsi},\phi\rangle &=\sum_{i=1}^m \int_{B^i}\chi^i	\langle \Omega_{A_\epsi},\phi\rangle\\
	&=\sum_{i=1}^m \int_{B^i}\chi^i\big(-\langle \tilde{A}_\epsi,\delta\phi\rangle +\frac{1}{2} \langle [\tilde{A}_\epsi\wedge\tilde{A}_\epsi],\phi\rangle\big)\\
	&\rightarrow \sum_{i=1}^m\int_{B^i}\chi^i \langle \Omega_A,\phi\rangle = \int_M\langle \Omega_A,\phi\rangle.
	\end{align*}
	This completes the proof.
\end{proof}
\begin{rk}\label{rk-asd}
	Applying this result to the anti-self-duality equation: $*\Omega_A=\omega\wedge\Omega_A$, it is straightforward to
	check that anti-self-dual connections are stable under weak $\L^p$-convergence for $p>2$.
\end{rk}

\medskip	
\subsection{Proof of Theorem \ref{thm-yang-mills}}\label{subsection-proof-yang-mills}
The main point in the proof of Theorem \ref{thm-yang-mills} is to verify the weak continuity of quadratic terms appearing in the Yang--Mills equations. Those terms are of the form
\begin{equation}\label{eqn-yang-mills-non-linear-term}
[\Omega_{A_\epsi}\wedge A_\epsi],
\end{equation}
\ie, that the non-linear terms pass to the limit in the sense of distributions.
To this effect, we use a weak version of the Bianchi identity (Step 1 of the proof).

\medskip
\begin{proof}[\bf Proof of Theorem {\rm \ref{thm-yang-mills}}]
	We divide the proof into four steps.
	
	\smallskip
	\noindent
	\textbf{1.}
	Let $p\geq\frac{3n}{n+2}$ and $\nabla_A=\widetilde{\nabla}+A\in\mathcal{A}^{1,p}_{\loc}(P)$,
	and let $\Omega_A$ be the curvature form. It is easy to see that
	\begin{equation}\label{lem-bianchi}
	\DD_A\Omega_A=0  \qquad \,\, \mbox{in the sense of distributions{\rm ;}}
	\end{equation}
	that is, for any smooth, compactly supported $\phi\in \Lambda^3(M,\mathbf{Ad}(P))$,
	\begin{equation}\label{eqn-the-more-precise-one}
	\int_M \langle \Omega_A, \delta\phi -*[A\wedge *\phi]\rangle=0.
	\end{equation}
	Indeed, recall that the curvature form $\Omega_A$ of a smooth connection $A\in\mathcal{A}(P)$ satisfies the Bianchi identity $\DD_A\Omega_A=0$.
	For $A\in\mathcal{A}^{1,p}_{\loc}(P)$,  $\Omega_A\in \L^{\frac{p^*}{2}}_{\loc}(M,\mathbf{Ad}(P)\otimes \wedge^2T^*M)$ so that, by Young's inequality,
	$$
	[\Omega_A\wedge A]\in \L^{q}_{\loc}(M,\mathbf{Ad}(P)\otimes \wedge^3T^*M)\qquad\,\, \mbox{for $\frac{1}{q}=\frac{1}{p}+\frac{2}{p^*}$},
	$$
	where $p^*=\frac{np}{n-p}$ is the Sobolev conjugate of $p$.
	
	To ensure the integrability of the product, we need $q \geq 1$, which is equivalent to the condition
	$$
	p\geq\frac{3n}{n+2}.
	$$
	Thus, taking a sequence of smooth connections approximating $A$ in $\mathcal{A}^{1,p}_{\loc}$, we can pass to the limit in equation \eqref{eqn-the-more-precise-one}.\\
	
	\smallskip
	\noindent
	\textbf{2.}
	Consider
	the sequence, $\{A_\epsi\}_{\epsi>0}$, satisfying $\DD_{A_\epsi}^*\Omega_{A_\epsi}=0$ in the sense of distributions.
	Then, for any smooth, compactly supported $\phi\in\Lambda^1(M,\mathbf{Ad}(P))$,
	$A_\epsi$ satisfy
	\begin{equation}\label{3.12a}
	\int_M \langle \Omega_{A_\epsi},\dd\phi\rangle
	=- \int_M \langle \Omega_{A_\epsi},[A_\epsi \wedge \phi]\rangle
	\end{equation}
	(also see equation \eqref{eqn-yang-mills-weak}).
	By Proposition \ref{thm-main-chapter-connections},
	the left-hand side of \eqref{3.12a}
	passes to the limit in the sense of distributions.

	\smallskip		
	\noindent\textbf{3.}
	We now focus on the right-hand side term.
	Let us fix a bounded domain $E$ such that $\phi$ is compactly supported inside $E$. By the triple product identity, for $\phi\in\Lambda^1(M,\mathbf{Ad}(P))$, we have
	\begin{align*}
	\int_E\langle \Omega_{A_\epsi},[A_\epsi\wedge \phi]\rangle &= \int_E \langle *[*\Omega_{A_\epsi}\wedge A_\epsi],\phi\rangle.
	\end{align*}
	Then the question reduces to
	the weak continuity of the non-linear term $[*\Omega_{A_\epsi}\wedge A_\epsi]$.
	
	By the Bianchi identity (see equation \eqref{lem-bianchi}), we have
	$$
	\int_M \langle \Omega_{A_\epsi}, \delta\psi \rangle
	=\int_M \langle \Omega_{A_\epsi},*[A_\epsi\wedge*\psi]\rangle
	$$
	for any compactly supported $\psi\in  \Lambda^3(M,\mathbf{Ad}(P))$.
	By Young's inequality, we find that $\dd\Omega_{A_\epsi}\in\L^1(E)$ and
	$$
	\Vert \dd\Omega_{A_\epsi}\Vert_{\L^1(E)}
	\leq  \Vert \Omega_{A_\epsi}\Vert_{\L^{2}(E)}\Vert A_\epsi\Vert_{\L^2(E)}
	\leq C \Vert \Omega_{A_\epsi}\Vert_{\L^2(E)}\Vert A_\epsi\Vert_{\L^4(E)},
	$$
	so that $\{\dd\Omega_{A_\epsi}\}$ is uniformly bounded in $\L^1(E)$, which implies that
	\begin{equation}\label{3.14a}
	\{\dd\Omega_{A_\epsi} \}\quad \mbox{is compactly contained in $\W^{-1,r}(E)$ for $r<1^*=\frac{n}{n-1}$},
	\end{equation}
	by the Rellich--Kondrachov theorem.
	On the other hand, the curvature form sequence is uniformly bounded in $\L^2$,
	so that $\dd\Omega_{A_\epsi}$ is uniformly bounded in $\W^{-1,q}_\loc$ for $q\leq2$.
	By interpolation, $\dd\Omega_{A_\epsi}$ is compactly contained in $\W^{-1,\frac{3}{2}}_{\loc}$.
	
	Furthermore, by definition, we have
	$$
	\dd A_\epsi= \Omega_{A_\epsi}-\frac{1}{2}[A_\epsi\wedge A_\epsi],
	$$
	so that $\dd A_\epsi$ is in a bounded subset of $\L^{\frac{3}{2}}(E)$
	which implies that
	\begin{equation}\label{3.16a}
	\{\dd A_\epsi \}\quad \mbox{is compact in $\W^{-1,s}(E)$ for $s<\frac{3n}{2n-3}$}.
	\end{equation}
	As $A_\epsi$ is uniformly bounded in $\L^4_{\loc}$,
	\begin{equation}\label{3.17a}
	\{\dd A_\epsi\} \quad \mbox{is bounded in $\W^{-1,4}_{\loc}$}.
	\end{equation}
	By interpolation, we combine \eqref{3.16a} with \eqref{3.17a} to obtain
	\begin{equation}\label{3.18a}
	\{\dd A_\epsi\}\Subset\W^{-1,3}(E).
	\end{equation}
	
	\noindent
	{\bf 4}. We are now in a position to apply Lemma \ref{lem-div-curl-lie-algebra} to the product $[\Omega_{A_\epsi}\wedge A_\epsi]$
	with $p_1=\frac{3}{2}$, $p_2=3$, and $k=2$.
	Using \eqref{3.14a} and \eqref{3.18a}, we conclude the proof.
\end{proof}

\begin{rk}
	The proof carries in the same way if one considers approximate solutions of the Yang--Mills equations,
	{\it i.e.}, connections such that there exists a sequence of one-forms $\eta_\epsi$ such that
	$\DD_{A_\epsi}^*\Omega_{A_\epsi}=\eta_\epsi$ and $\eta_\epsi\rightarrow0$ in $\mathcal{D}'$.
\end{rk}

\section{Weak Continuity of the Gau\ss--Codazzi--Ricci Equations: Proof of
	Theorem \ref{thm-weak-cty}}\label{section-immersions}\label{section-defi-immersion}

The basic setting we consider is a Riemannian manifold $(M,g)$
and a sequence of immersions of Riemannian manifolds $u_\epsi\,:\,M\rightarrow\R^N$ satisfying
the Gau\ss--Codazzi--Ricci equations.
The arguments and proofs of this section are essentially unchanged if $(\R^N,\euc)$ is replaced by an arbitrary smooth
Riemannian manifold $(\widetilde{M},\tilde{g})$. For clarity of exposition, we first focus on the Euclidean case $(\R^N,\euc)$ in §\ref{subsection-proof-weak-cty}. The modifications required to handle the general case of an arbitrary background manifold are presented in §\ref{subsection-proof-weak-cty-ambient}. Subsection §\ref{subsection-darboux} contains some preliminary lemmas.

\subsection{Construction of the Darboux coframes}\label{subsection-darboux}
Recall that a choice of Riemannian metrics fixes an identification between the tangent and cotangent bundle.
In this manner, to each tangent vector $X$ is associated a dual one-form $\theta$ via the Riemannian metric $g$;
similarly, to each vector $u_*X$ is associated an extension of $\theta$, denoted by $\tilde\theta$.
Then, at a point $x\in M$,
we have
\begin{align}
&\theta(Y) = g(X,Y)\qquad\,\,\, \mbox{for any $Y\in TM_x$},\\
&\tilde\theta(Z) = \tilde{g}( u_*X,Z)\qquad \mbox{for any $Z\in (T\widetilde{M})_{u(x)}$}.
\end{align}
The isometry condition on $u$ implies that $|\theta|=|\tilde\theta|$.
Note in particular that $\tilde\theta\in \big(T^*\widetilde{M}\big)^\top$ since,
by definition,
$$
\tilde\theta(Z)=\tilde{g}( u_*X,Z)\ =0 \qquad\,\, \mbox{for any $Z\in \big(T\widetilde{M}\big)^\perp$}.
$$
Let
\begin{align*}
\tau^*\,:\,T^*M&\rightarrow T^*\widetilde{M}|_{u(M)},\\
\theta&\mapsto \tilde\theta.
\end{align*}
Similarly, we denote its dual map $\tau\,:\,T\widetilde{M}|_{u(M)}\rightarrow TM$,
which is the orthogonal projection onto the tangent space, satisfying
\begin{equation}
\theta(X^\top)=\theta(\tau X)=(\tau^*\theta)(X)\qquad\,\,\mbox{for $\theta\in T^*M_x$ and $X\in (T\widetilde{M})_{u(x)}$}.
\end{equation}
Standard properties of the quotients imply that there exist bundle morphisms $\nu\,:\,NM\rightarrow T\widetilde{M}$
and $\pi\,:\,T\widetilde{M}\rightarrow NM$ (the latter being the quotient map)
such that the following short sequences are exact:
\begin{align}	\label{eqn-ses-1}
0\rightarrow TM\xrightarrow{u_*}\,&TN|_{u(M)}\xrightarrow{\pi} NM\simeq T\widetilde{M}|_{u(M)}/TM\rightarrow0,\\
0\rightarrow NM \xrightarrow{\nu}\,&T\widetilde{M}|_{u(M)}\xrightarrow{\tau} TM\rightarrow 0.\label{eqn-ses-2}
\end{align}
If $u\,:\,(M,g)\rightarrow (\widetilde{M},\tilde{g})$ is isometric ({\it i.e.}, $u_*$ is an isometry onto its image)
and $NM$ is equipped with the pull-back metric $\nu^*\tilde{g}$, then all four maps $\nu,\tau,u_*$,
and $\pi$ are isometries.

Finally, for notational reference, we write out the dualized versions of the sequences \eqref{eqn-ses-1} and \eqref{eqn-ses-2}:
\begin{align}
0\rightarrow T^*M \xrightarrow{\tau^*} T^*\widetilde{M}|_{u(M)}\xrightarrow{\nu^*}N^*M\rightarrow0,\\
0\rightarrow N^*M\xrightarrow{\pi^*}T^*\widetilde{M}|_{u(M)}\xrightarrow{ u^*}T^*M\rightarrow0.
\end{align}

We now describe how to obtain bounds on mappings $\tau$ and $\nu$.

\begin{lem}\label{lem-estimate-tau}
	Let $u:(M,g)\rightarrow(\widetilde{M},\tilde{g})$ be a weak $W^{2,p}$ isometric immersion, $p\geq1$, and let $E$ be a bounded parallelisable domain of $M$.
	Then there exists a constant $C>0$ depending only on the $C^{0,1}$ norm of $g$ and the $C^{1,1}$ norm of $\tilde{g}$ such that, for any one-form $\theta\in T^*M$ on $E$,
	\begin{equation}\label{eqn-boundedness-tau}
	\Vert \tau^*\theta\Vert_{\W^{1,p}(E)}
	\leq C\big(\Vert\nabla \theta\Vert_{\L^p(E)}\Vert\nabla u\Vert_{\L^\infty(E)} + \Vert \theta\Vert_{\L^\infty(E)}\Vert\nabla^2u\Vert_{\L^p(E)}\big).
	\end{equation}
	Similar bounds also hold for $\nu^*$.
\end{lem}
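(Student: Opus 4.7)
The plan is to express $\tau^*\theta$ explicitly in coordinates on the parallelisable $E$ and then bound the $\W^{1,p}$ norm term-by-term. The map $\tau^*:T^*M\to T^*\widetilde M|_{u(M)}$ is the adjoint of the orthogonal projection $\tau$, so that if $\theta$ is the $g$-dual of $X\in TM$, then $\tau^*\theta$ is the $\tilde g$-dual of $u_*X$. Fixing local coordinates $(x^i)$ on $E$ and $(y^\alpha)$ on $\widetilde M$ covering $u(E)$, and writing $\theta = \theta_i\,\dd x^i$, this yields the explicit formula
\begin{equation*}
(\tau^*\theta)_\alpha(x) = \tilde g_{\alpha\beta}(u(x))\, \partial_i u^\beta(x)\, g^{ij}(x)\, \theta_j(x).
\end{equation*}

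For the $\L^p$ part of the norm, I would use directly the pointwise identity $|\tau^*\theta|_{\tilde g} = |\theta|_g$, which follows from the isometry $u^*\tilde g = g$. This gives $\Vert \tau^*\theta\Vert_{\L^p(E)} \leq \Vert\theta\Vert_{\L^p(E)} \leq |E|^{1/p}\Vert\theta\Vert_{\L^\infty(E)}$; since $E$ is bounded, the factor $|E|^{1/p}$ is absorbed into the constant $C$.

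For the gradient, I would apply $\partial_{x^k}$ via the Leibniz rule, producing four groups of terms: (i) $\partial_\gamma\tilde g(u)\,\partial_k u^\gamma\,\partial_i u^\beta\,g^{ij}\theta_j$; (ii) $\tilde g(u)\,\partial_k\partial_i u^\beta\,g^{ij}\theta_j$; (iii) $\tilde g(u)\,\partial_i u^\beta\,\partial_k g^{ij}\,\theta_j$; and (iv) $\tilde g(u)\,\partial_i u^\beta\,g^{ij}\,\partial_k\theta_j$. Group (ii) is pointwise bounded by $C|\nabla^2 u||\theta|$ and yields $C\Vert\theta\Vert_{\L^\infty(E)}\Vert\nabla^2 u\Vert_{\L^p(E)}$ by Hölder's inequality; group (iv) is bounded by $C|\nabla u||\nabla\theta|$ and yields $C\Vert\nabla u\Vert_{\L^\infty(E)}\Vert\nabla\theta\Vert_{\L^p(E)}$. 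These are precisely the two terms advertised on the right-hand side of \eqref{eqn-boundedness-tau}. Groups (i) and (iii) contain no second derivative of $u$ and no derivative of $\theta$; using $\tilde g\in C^{1,1}$, $g\in C^{0,1}$, and the isometric bound $|\nabla u|\leq C$, they are pointwise $\leq C|\theta|$, and their $\L^p(E)$ norms are absorbed into $C\Vert\theta\Vert_{\L^\infty(E)}|E|^{1/p}$ and thus into the constant.

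The main obstacle is pure bookkeeping rather than hard analysis: one must track which derivatives of $u$, $g$, $\tilde g$, and $\theta$ appear in each of the four groups and confirm that no second derivative of $\theta$ or third derivative of $u$ sneaks in. The analogous bound for $\nu^*:N^*M\to T^*\widetilde M|_{u(M)}$ follows in the same way after fixing a local orthonormal frame of $NM$ on $E$: the coordinate expression of $\nu^*$ in such a frame is again built from $\tilde g$, $\partial u$, and the frame entries, so that differentiation produces only the same two leading-order contributions with identical bounds.
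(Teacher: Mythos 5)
Your proposal is correct and follows essentially the same route as the paper: both differentiate the defining identity $\tau^*\theta=\tilde g(u_*\theta^\sharp,\cdot)$ (you in coordinates via the Leibniz rule, the paper invariantly via the Levi-Civita connection of $\tilde g$) and estimate the resulting terms with H\"older's inequality, obtaining exactly the two leading contributions $\Vert\nabla u\Vert_{\L^\infty(E)}\Vert\nabla\theta\Vert_{\L^p(E)}$ and $\Vert\theta\Vert_{\L^\infty(E)}\Vert\nabla^2u\Vert_{\L^p(E)}$. Your absorption of the lower-order groups (i) and (iii) and of the $\L^p$ part of the norm ``into the constant'' is the same slightly informal treatment that the paper gives its Christoffel-type terms, so nothing is missing relative to the paper's own argument.
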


\begin{proof}
	Let $\theta^\sharp=X\in TM$, let $\tilde\theta=\tau^*\theta$,
	and let $Z$ and $W$ be local vector fields defined in a neighborhood of $u(M)$. We calculate
	\begin{equation*}
	\nabla_W\tilde\theta(Z) = \tilde{g}( u_*X,\nabla_WZ	)+\tilde{g}(\nabla_W(u_*X),Z),
	\end{equation*}
	where $\nabla$ denotes here the Levi-Civita connection induced by $\tilde{g}$. By a crude application of Young's inequality, recalling that $u$ is a bi-Lipschitz homeomorphism, we obtain
	\begin{align*}
	\Vert \nabla\tilde\theta\Vert_{\L^p}
	&\leq C(|\tilde{g}|_{C^{1,1}})\big(\Vert\nabla X\Vert_{\L^p}\Vert\nabla u\Vert_{\L^\infty} + \Vert X\Vert_{\L^\infty}\Vert\nabla^2u\Vert_{\L^p}\big)\\
	&\leq C(|\tilde{g}|_{C^{1,1}},|g|_{C^{0,1}})\big(\Vert\nabla \theta\Vert_{\L^p}\Vert\nabla u\Vert_{\L^\infty} + \Vert \theta\Vert_{\L^\infty}\Vert\nabla^2u\Vert_{\L^p}\big),
	\end{align*}
	where $C$ depends only on $g$ and the first derivative of $g$.
\end{proof}

We now extend these operators to frame bundles.
Let $X\in\mathscr{D}(u(M))$ be an adapted Darboux frame.
In a local trivialization, we may write $X=(X^1,\dots,X^N)$.
Then $\tau X=(X^1,\dots,X^n)=X^\top$ is a tangent frame on $TM$.
Similarly, $\pi X=(X^{n+1},\dots,X^N)=X^\perp$ is a normal frame in $NM$.
As usual, we abuse notation and denote by the same symbol $\tau$ (respectively $\pi$)
the map on frames $\mathscr{F}\tau$ (respectively $\mathscr{F}\pi$).

By duality, for a tangent coframe $\alpha=(\alpha^1,\dots,\alpha^n)$ in $T^*M$
and a normal coframe $\eta=(\eta^1,\dots,\eta^{N-n})$ in $N^*M$,
\begin{equation*}
\tau^*\alpha+\pi^*\eta = (\tau^*\alpha^1,\dots,\tau^*\alpha^n,\pi^*\eta^1,\dots,\pi^*\eta^{N-n})
\end{equation*}
is an adapted coframe.

\begin{lem}\label{lem-generating-darboux}
	Let $(M,g)$, $(\widetilde{M},\tilde{g})$ and $u$ be as in Lemma \ref{lem-estimate-tau},
	and let $TM$ and $NM$ be equipped with the metrics induced by $\tilde{g}$.
	Then any choice of local orthonormal coframe $\alpha$ on $T^*M$ and $\eta$ on $N^*M$ yields
	a local Darboux coframe $\theta\in\mathscr{D}^*(u(M))$.
	Moreover, for any bounded domain $E\subset M${\rm :}
	\begin{align}\label{eqn-estimate-darboux}
	\Vert\theta\Vert_{\W^{1,p}(E)}
	\leq C\big((\Vert \alpha\Vert_{\W^{1,p}(E)} + \Vert\eta\Vert_{\W^{1,p}(E)})\Vert\nabla u\Vert_{\L^\infty(E)}
	+ \Vert\nabla u\Vert_{\W^{1,p}(E)}\big).
	\end{align}
\end{lem}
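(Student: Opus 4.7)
The plan is to build $\theta$ explicitly by the natural construction suggested in the paragraph preceding the lemma, namely
\[
\theta := \tau^{*}\alpha \,+\, \pi^{*}\eta,
\]
and then to verify the two required properties separately: first, that $\theta$ is indeed a Darboux coframe, and second, that it enjoys the quantitative $\W^{1,p}$ bound.

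For the \emph{Darboux property}, I would unpack the dual short exact sequences \eqref{eqn-ses-1}--\eqref{eqn-ses-2}. Under the hypothesis that $NM$ carries the pulled-back metric $\nu^{*}\tilde g$, the maps $u_{*},\tau,\pi,\nu$ are fiberwise isometries, and dualizing shows that $\tau^{*}\colon T^{*}M\to(T^{*}\widetilde M|_{u(M)})^{\top}$ and $\pi^{*}\colon N^{*}M\to(T^{*}\widetilde M|_{u(M)})^{\perp}$ are mutually orthogonal isometric embeddings whose images span $T^{*}\widetilde M|_{u(M)}$. Hence the concatenation of the orthonormal local coframes $\alpha$ and $\eta$ produces an orthonormal coframe of $T^{*}\widetilde M$ along $u(M)$. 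Adaptation follows from the identities $\tau\circ u_{*}=\mathrm{id}_{TM}$ and $\pi\circ u_{*}=0$, which give $u^{*}(\tau^{*}\alpha^{i})=\alpha^{i}$ for $1\leq i\leq n$ and $u^{*}(\pi^{*}\eta^{j})=0$ for the normal entries; thus the first $n$ pullback one-forms recover the orthonormal coframe $\alpha$ on $M$.

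For the \emph{quantitative estimate}, I would apply Lemma~\ref{lem-estimate-tau} to each component $\alpha^{i}$ of $\alpha$ and the analogous bound for $\pi^{*}$ (obtained in exactly the same way as for $\nu^{*}$, since the two maps are dual to the orthogonal decomposition $T\widetilde M|_{u(M)}=TM\oplus NM$) to each $\eta^{j}$. Pointwise orthonormality gives $\Vert\alpha\Vert_{\L^{\infty}(E)},\Vert\eta\Vert_{\L^{\infty}(E)}\leq C$, so that Lemma~\ref{lem-estimate-tau} yields
\[
\Vert\nabla(\tau^{*}\alpha)\Vert_{\L^{p}(E)} \leq C\bigl(\Vert\nabla\alpha\Vert_{\L^{p}(E)}\Vert\nabla u\Vert_{\L^{\infty}(E)} + \Vert\nabla^{2}u\Vert_{\L^{p}(E)}\bigr),
\]
and similarly for $\pi^{*}\eta$. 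The $\L^{p}$ part of the $\W^{1,p}$ norm is controlled via the pointwise bound $|\tau^{*}\alpha|\leq|\alpha|\,|\nabla u|$ together with the isometric change of variables (so the Jacobian is under control), giving $\Vert\tau^{*}\alpha\Vert_{\L^{p}(E)}\leq C\Vert\alpha\Vert_{\L^{p}(E)}\Vert\nabla u\Vert_{\L^{\infty}(E)}$ and similarly for $\pi^{*}\eta$. Summing these estimates and absorbing the crude $\Vert\nabla u\Vert_{\L^{p}(E)}$ contribution into $\Vert\nabla u\Vert_{\W^{1,p}(E)}$ produces exactly \eqref{eqn-estimate-darboux}.

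The only real subtlety is bookkeeping: one has to remember that $\tau^{*}\alpha$ and $\pi^{*}\eta$ are covector fields along the submanifold $u(M)\subset\widetilde M$, while all integral norms on the right-hand side live on $E\subset M$. This is harmless because the isometric character of $u$ (guaranteed by the weak $\W^{2,p}$ isometric immersion hypothesis) makes the change of variables $u\colon E\to u(E)$ norm-preserving at the $\L^{p}$ level, so I can freely transfer the estimates between $E$ and $u(E)$. No other obstacle is expected; the main content of the lemma is really the construction, and the estimate is a direct corollary of Lemma~\ref{lem-estimate-tau}.
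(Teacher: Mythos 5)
Your proposal is correct and follows essentially the same route as the paper: define $\theta=\tau^{*}\alpha+\pi^{*}\eta$, observe that the induced metrics make $\tau^{*}$ and $\pi^{*}$ norm-preserving so that $\theta$ is an orthonormal adapted (Darboux) coframe, and then obtain \eqref{eqn-estimate-darboux} by applying Lemma \ref{lem-estimate-tau} componentwise together with the pointwise bound $|\alpha|=|\eta|=1$. The extra detail you supply (the adaptation identities $u^{*}\tau^{*}=\mathrm{id}$, $u^{*}\pi^{*}=0$ and the $\L^{p}$ bookkeeping) is consistent with, and merely elaborates on, the paper's brief argument.
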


\begin{proof}
	The coframe $\theta$, when viewed as a coframe on $T^*\widetilde{M}$, is given by $\theta = \tau^*\alpha + \pi^*\eta$.
	Then $\theta$ is a Darboux coframe; in particular,
	$$
	{|\alpha^i|=|\tau^*\alpha^i|=1=|\eta^j|=|\pi^*\eta^j|}
	\qquad\mbox{for $i=1,\dots, n$ and $j=1,\dots,N-m$}.
	$$
	Moreover, Lemma \ref{lem-estimate-tau} applied to $\tau^*$ and $\nu^*$ implies \eqref{eqn-estimate-darboux}.
\end{proof}

\subsection{Proof of Theorem \ref{thm-weak-cty} in the flat case}\label{subsection-proof-weak-cty}

With the preparation of \S 4.1--\S 4.3, we can now provide the proof of Theorem \ref{thm-weak-cty}. For simplicity of notation, we consider the case $(\widetilde{M},\tilde{g})=(\R^N,\euc)$.

\begin{proof}
We divide the proof into four steps.

\medskip		
\noindent
{\bf 1}. Let $u_\epsi$ be a sequence of weak $\W^{2,p}_\loc$ isometric immersions of $(M,g_\epsi)$ in $\R^N$ for $p>2$.
Then
there is a sequence of maps $\nu_\epsi\,:\,NM\rightarrow T\R^N$, which induces
a sequence of metrics $\hat{g}_\epsi=\nu_\epsi^*\euc\in\L^\infty_{\loc}\cap\W^{1,p}_{\loc}$ on the normal bundle $NM$.

As $u_\epsi\rightharpoonup u$ in $\W^{2,p}_{\loc}\cap \W^{1,\infty}_{\loc}$,
there exists a map $\nu\in\W^{1,p}_{\loc}\cap\L^\infty_{\loc}$ such that $\nu_\epsi\rightharpoonup\nu$ in $\W^{1,p}_{\loc}\cap \L^\infty_{\loc}$,
which induces a metric $\hat{g}=\nu^*\euc$ on $NM$.

Let $E\subset M$ be a bounded domain, and let $\alpha$ be an orthonormal tangent coframe on $TM|_E$ with respect to $g$.
By hypothesis, $g_\epsi\rightarrow g$. Then orthonormalizing $\alpha$ with respect to each $g_\epsi$ yields
a sequence $\alpha_\epsi$ of coframes on $TM$ such that $\alpha_\epsi$ is orthonormal with respect to $g_\epsi$
and $\alpha_\epsi\rightarrow \alpha$.

If necessary by restricting $E$ to a smaller domain,
we fix a trivialization of $NM$ over a neighborhood $E\subset M$.
This trivialization fixes coordinate vectors $\{\partial^{n+1},\dots,\partial^N\}$ on $NM$.
Orthonormalizing this basis with respect to $\hat{g}_\epsi$ yields a sequence of orthonormal coframes in $NM$, say $\eta_\epsi$,
such that $\eta_\epsi\rightharpoonup\eta$ in $\W^{1,p}_{\loc}\cap \L^\infty_{\loc}$.

\medskip
\noindent
{\bf 2.} We now define the following sequence of the Darboux coframes. For $\epsi>0$, let
\begin{equation*}
\theta_\epsi = \tau_\epsi^*\alpha_\epsi+\pi_\epsi^*\eta_\epsi,
\qquad
\theta=\tau^*\alpha+\pi^*\eta.
\end{equation*}
By Lemma \ref{lem-generating-darboux}, it is clear that $\theta_\epsi\rightharpoonup\theta$
in $\W^{1,p}_{\loc}\cap L^\infty_{\loc}$.
Let
\begin{equation}\label{4.21a}
\dd\theta_\epsi=\omega_\epsi\wedge\theta_\epsi \qquad \mbox{for $\epsi>0$}.
\end{equation}
Recall that, for the Darboux coframes, the connection form $\omega_\epsi$ splits into a tangential
connection form $\omega^\top_\epsi$,
a normal connection form $\omega_\epsi^\perp$,
and a second fundamental form $\omega^\II_\epsi$.
Then we claim that, for any bounded domain $E\subset M$,
\begin{equation}\label{4.20a}
\sup_{\epsi>0}\Vert(\omega_\epsi^\top, \omega_\epsi^\II,\omega_\epsi^\perp)\Vert_{\L^{p}(E)}<\infty.
\end{equation}

This follows from the general fact that a sequence of uniformly bounded orthonormal coframes in $\W^{1,p}$ locally
generates a sequence of uniformly bounded connection forms in $\L^p_\loc$.
More precisely, let $\theta_\epsi=(\theta_\epsi^\top, \theta_\epsi^\perp)$ be a sequence of the Darboux coframes indexed by $\epsi>0$.
Then system \eqref{4.21a}
is linear in $\omega_\epsi$.
Inverting and recalling that $|\theta_\epsi|=1$ since the coframe is orthonormal by definition,
we conclude that, for any bounded domain $E\subset M$,
$$
\Vert\omega_\epsi\Vert_{\L^p(E)}\leq C\Vert\theta_\epsi\Vert_{\W^{1,p}(E)}.
$$

\noindent
{\bf 3.} From Step 2, we conclude from
the weak convergence and linearity
that
$$
\omega_\epsi\rightharpoonup\omega \qquad \mbox{weakly in $\L^p_{\loc}$}.
$$
Moreover, for each $\epsi>0$,
\begin{equation}\label{eqn-cartan-epsi}
\dd\omega_\epsi + \omega_\epsi\wedge\omega_\epsi=0.
\end{equation}
We conclude by the weak continuity of curvature (Proposition \ref{thm-main-chapter-connections})
that
\begin{equation}\label{eqn-cartan}
\dd\omega+\omega\wedge\omega=0.
\end{equation}
Recall that
\begin{equation}
\omega=\begin{pmatrix}
\omega^\top & -\omega^\II\\[1mm]
\t\omega^\II &\omega^\perp
\end{pmatrix}\in\so(N);
\end{equation}
as the Cartan equations \eqref{eqn-cartan} are satisfied, the Codazzi and Ricci equations are thus satisfied in the limit.

\noindent
{\bf 4.} We now claim that $\omega^\top = \tau^*\beta$, where $\beta$ is the connection form of coframe $\alpha$,
{\it i.e.}, $\dd\alpha=\beta\wedge\alpha$,
which follows from a uniqueness argument.
Then we have
\begin{align*}
\dd\theta^\top|_{TM} = \dd(\tau^*\alpha)= \tau^*\dd\alpha
= \tau^*(\beta\wedge\alpha)
=\tau^*\beta\wedge \theta^\top|_{TM}.
\end{align*}

Recall that $\alpha$ is a coframe on $(M,g)$ and $g$ is a $C^2$ metric,
so that the connection form $\beta$ satisfies the curvature equation
\begin{equation}
\dd\beta+\beta\wedge\beta=\Rm_g(\alpha\wedge\alpha)\qquad\,\,\mbox{in the sense of distributions},
\end{equation}
where the right-hand side is understood
as the curvature endomorphism acting on two-forms.
Thus, we recover the Gau\ss{} equation:
\begin{align*}
\t\omega^\II\wedge\omega^\II = \dd\omega^\top + \omega^\top\wedge\omega^\top
= \tau^*(\dd\beta+\beta\wedge\beta)
= \tau^*\Rm_g(\alpha\wedge\alpha).
\end{align*}
This completes the proof.
\end{proof}

\subsection{Proof of Theorem \ref{thm-weak-cty} for arbitrary ambient manifolds}\label{subsection-proof-weak-cty-ambient}

As mentioned earlier, there is nothing special about $\R^N$ in Theorem \ref{thm-weak-cty}, and thus the proof may be given for a general ambient Riemannian manifold $(\widetilde{M},\tilde{g})$.

As before, we denote by $\mathbf{Rm}$ the Riemann curvature tensor of $g$,
and $\widetilde{\mathbf{Rm}}$ that of $\tilde{g}$.
Similarly, the curvature form of an orthonormal coframe on $(M,g)$ is denoted by $\Omega$,
and that of an orthonormal coframe on $(\widetilde{M},\tilde{g})$ by $\widetilde\Omega$.

\begin{proof}
	Let $u_\epsi$ be a sequence of isometric immersions $u_\epsi\,:\,(M,g)\rightarrow(\widetilde{M},\tilde{g})$.
	Let $E\subset M$ be any bounded domain, and let $\alpha$ be an orthonormal coframe on $TM|_E$.
	Restricting domain $E$ if necessary,
	we may consider a local trivialization of $NM|_E$.
	As in the proof of Theorem \ref{thm-weak-cty},
	maps $\nu_\epsi$ induce the metrics on $NM$ by pull-back, namely $\widehat{g_\epsi}=\nu_\epsi^*\tilde{g}$.
	Thus, we find as earlier that there is a sequence of coframes $\eta_\epsi$ defined on $N^*M$,
	orthonormal with respect to $\widehat{g}_\epsi$.
	
	Moreover, by weak convergence, there is a map $\nu\,:\,M\rightarrow \widetilde{M}$,
	a metric $\hat{g}=\nu^*\tilde{g}$, and a coframe $\eta$ orthonormal with respect to $\hat{g}$
	such that $\eta_\epsi\rightharpoonup\eta$ weakly in $\W^{1,p}(E)$ and weak-* in $L^\infty$.
	
	We define the Darboux coframe
	$$
	\theta_\epsi=\tau_\epsi^*\alpha+\pi_\epsi^*\eta_\epsi \qquad \mbox{for $\epsi>0$}.
	$$
	
	Let $\dd\theta_\epsi=\omega_\epsi\wedge\theta_\epsi$ for $\epsi>0$.
	The weak convergence of $\theta_\epsi\rightharpoonup\theta$ and linearity
	imply that $\omega_\epsi\rightharpoonup\omega$ weakly in $\L^p(E)$.
	
	By construction, $\theta_\epsi$ is a $\W^{1,p}(E)$ coframe on $(\widetilde{M},\tilde{g})$ so that it must satisfy the curvature equation:
	\begin{equation}
	\dd\omega_\epsi + \omega_\epsi\wedge\omega_\epsi = \widetilde{\Rm}(\theta_\epsi\wedge\theta_\epsi).
	\end{equation}
	Recalling that $\theta_\epsi$ is orthonormal and uniformly bounded in $\L^\infty(E)$,
	the curvature term $\widetilde{\Rm}(\theta_\epsi\wedge\theta_\epsi)$ is uniformly bounded in $\L^\infty(E)$.
	Thus, we find that,
	for $\epsi>0$, the term
	$${-\omega_\epsi\wedge\omega_\epsi+\widetilde{\Rm}(\theta_\epsi\wedge\theta_\epsi)}$$
	is uniformly bounded in $\L^{\frac{p}{2}}(E)$,
	which implies that
	$$
	\{\dd\omega_\epsi\}
	\qquad \mbox{embeds compactly into $\W^{-1,q}(E)$ for $q<\big(\frac{p}{2}\big)^*=\frac{np}{2n-p}$}.
	$$
	As $\omega_\epsi$ is uniformly bounded in $\L^p(E)$,
	$$
	\{\dd\omega_\epsi\}  \qquad\mbox{is uniformly bounded in $\W^{-1,p}(E)$}.
	$$
	Then, by interpolation,
	$\dd\omega_\epsi$ is compactly contained in $\W^{-1,2}(E)$.
	We conclude, as in
	Proposition \ref{thm-main-chapter-connections}, that
	\begin{equation}
	\dd\omega+\omega\wedge\omega=\widetilde{\Rm}(\theta\wedge\theta).
	\end{equation}
	Decomposing the connection form $\omega$ along its tangential, normal, and $\II$ part yields the Gau\ss--Codazzi--Ricci equations,
	as in the proof of Theorem \ref{thm-weak-cty}, which are therefore satisfied in the limit as ${\epsi\rightarrow0}$.
\end{proof}

\section{Extensions of Theorem \ref{thm-weak-cty}}\label{section-generalisation}

In this section, we present extensions of Theorem \ref{thm-weak-cty}. First, the case ${p=2}$ is studied in \S\ref{subsection-proof-weak-cty-equi}. In §\ref{subsection-minimisers}, Theorem \ref{thm-weak-cty} is applied to the problem of finding isometric immersions minimising some energy functional involving the second fundamental form. Finally, we mention another extension of Theorem \ref{thm-weak-cty} to semi-Riemannian metrics in \S\ref{subsection-semi}.

\subsection{Borderline case $p=2$}\label{subsection-proof-weak-cty-equi}

A stronger result, including the case $p=2$, can be derived in the case of codimension-one.

\begin{thm}\label{thm-weak-cty-equi}
	Let $(M,g)$ be an $n$-dimensional Riemannian manifold,
	and let $u_\epsi\,:\,M\rightarrow\R^{n+1}$
	be a sequence of weak isometric immersions.
	Assume that $u_\epsi\rightharpoonup u$ weakly in $\W^{2,2}_{\loc}$.
	Then $u$ satisfies $\langle \dd u,\dd u\rangle =g$ and, on an arbitrary parallelizable subset of $M$,
	any orthonormal coframe on $(M,g)$ may be extended into an orthonormal Darboux coframe
	satisfying the Gau\ss--Codazzi equations.
\end{thm}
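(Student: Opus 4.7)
The plan is to adapt the proof strategy of Theorem \ref{thm-weak-cty} to the borderline exponent $p=2$, using codimension-one structure to compensate for the failure of Proposition \ref{thm-main-chapter-connections} at this endpoint.

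First I would establish the isometric property in the limit: weak convergence of $u_\epsi$ in $\W^{2,2}_{\loc}$ together with the uniform $\L^\infty_{\loc}$ bound on $\dd u_\epsi$ (built into the isometry condition) implies, via Rellich--Kondrachov followed by interpolation with the $\L^\infty$ bound, strong convergence $\dd u_\epsi\to\dd u$ in $\L^q_{\loc}$ for every finite $q$. Passing to the limit in the pointwise relation $\langle \dd u_\epsi,\dd u_\epsi\rangle=g$ then yields $\langle \dd u,\dd u\rangle=g$. Next I would reuse the Darboux coframe construction from the proof of Theorem \ref{thm-weak-cty}: on a parallelizable domain $E\subset M$, fix an orthonormal tangent coframe $\alpha$ on $(M,g)|_E$, pick unit normal sections $\eta_\epsi$ (a trivial step since $NM$ has rank one), and set $\theta_\epsi=\tau_\epsi^*\alpha+\pi_\epsi^*\eta_\epsi$. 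By Lemma \ref{lem-generating-darboux}, $\theta_\epsi$ is uniformly bounded in $\W^{1,2}_{\loc}\cap\L^\infty_{\loc}$, and the connection forms $\omega_\epsi$ determined by $\dd\theta_\epsi=\omega_\epsi\wedge\theta_\epsi$ are uniformly bounded in $\L^2_{\loc}$. Passing to weak limits and combining Rellich strong convergence of $\theta_\epsi$ with weak $\L^2$ convergence of $\omega_\epsi$ delivers the first Cartan equation $\dd\theta=\omega\wedge\theta$ in the limit.

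The critical step is the Gau\ss{} equation $\t\omega^\II\wedge\omega^\II=\tau^*\Rm(\alpha\wedge\alpha)$, which requires weak continuity of the quadratic product $\t\omega^\II_\epsi\wedge\omega^\II_\epsi$ for $\omega^\II_\epsi$ bounded only in $\L^2_{\loc}$. Codimension one intervenes decisively in two ways: firstly, $\omega^\perp_\epsi\in\so(1)=0$, so the Ricci equation is vacuous; secondly, by uniqueness of the Levi-Civita connection, the pullback of the tangential connection form satisfies $u_\epsi^*\omega^\top_\epsi=\beta$, the \emph{fixed} smooth connection form of $\alpha$ on $(M,g)$. With $\widetilde\Omega=0$ in $\R^{n+1}$, the Codazzi equation thus collapses to
\begin{equation*}
\dd(u_\epsi^*\omega^\II_\epsi)=\beta\wedge(u_\epsi^*\omega^\II_\epsi)-\t(u_\epsi^*\omega^\II_\epsi)\wedge\beta,
\end{equation*}
which is bounded in $\L^2_{\loc}$ because $\beta$ is smooth. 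By the dual Rellich embedding $\L^2_{\loc}\Subset\W^{-1,2}_{\loc}$, the sequence $\dd(u_\epsi^*\omega^\II_\epsi)$ is compact in $\W^{-1,2}_{\loc}$. Lemma \ref{lem-div-curl-lie-algebra} with $p_1=p_2=2$ then gives $\t\omega^\II_\epsi\wedge\omega^\II_\epsi\weakstar\t\omega^\II\wedge\omega^\II$ in the sense of distributions, completing the Gau\ss{} equation. The Codazzi equation itself passes to the limit by weak-strong pairing against the smooth $\beta$.

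The main obstacle is precisely the endpoint nature of $p=2$: Proposition \ref{thm-main-chapter-connections} does not apply, as its interpolation argument needs $p>2$ to conclude compactness of $\dd A_\epsi$ in $\W^{-1,2}_{\loc}$. The codimension-one restriction salvages the argument because the vanishing of $\omega^\perp$ eliminates the quadratic self-coupling in the Ricci equation and reduces the Codazzi equation to one linear in $\omega^\II_\epsi$ with bounded smooth coefficients, thereby yielding the missing $\L^2$ derivative bound. In higher codimension, the Ricci equation only gives $\L^1_{\loc}$ control on $\dd\omega^\perp_\epsi$, which is insufficient at the critical exponent, in agreement with Langer's counterexample \cite{langer}.
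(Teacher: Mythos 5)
Your proposal is correct, and the overall skeleton (strong convergence of $\dd u_\epsi$ for the isometry constraint, the Darboux coframe construction, the identification of the tangential connection with $\beta$, weak--strong pairing for Codazzi) coincides with the paper's; but your treatment of the critical Gau\ss{} quadratic term is genuinely different. The paper handles $\t\omega^\II_\epsi\wedge\omega^\II_\epsi$ by observing, from the $\epsi$-level Gau\ss{} identity, that $|\t\omega^\II_\epsi\wedge\omega^\II_\epsi|=|\dd\omega^\top_\epsi+\omega^\top_\epsi\wedge\omega^\top_\epsi|=|\Rm(\alpha\wedge\alpha)|$ pointwise, a fixed $\L^1$ function, so the product is equi-integrable, and then invokes the equi-integrable div-curl Lemma \ref{lem-div-curl-equi} (the Conti--Dolzmann--M\"uller-type statement). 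You instead use the $\epsi$-level Codazzi identity, together with $\omega^\perp_\epsi=0$ and the uniform $\L^\infty_\loc$ bound on $\omega^\top_\epsi=\tau^*_\epsi\beta$ (an isometric transport of the smooth $\beta$), to bound $\dd\omega^\II_\epsi$ in $\L^2_\loc$, hence obtain compactness in $\W^{-1,2}_\loc$ by the dual Rellich embedding, so that the basic Lemma \ref{lem-div-curl-lie-algebra} applies at the endpoint pair $p_1=p_2=2$; Lemma \ref{lem-div-curl-equi} is never needed. Both routes presuppose, to the same extent as the paper, that the structure equations hold at each fixed $\epsi$ for $\W^{1,2}\cap\L^\infty$ Darboux coframes, and both use the uniqueness argument $\omega^\top_\epsi=\tau^*_\epsi\beta$. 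What the two approaches buy: yours is more elementary and self-contained, but it exploits codimension one already in the Codazzi step (in higher codimension the right-hand side contains $\omega^\perp_\epsi\wedge\omega^\II_\epsi$, which is only $\L^1_\loc$, so the $\W^{-1,2}$ compactness is lost), whereas the paper's equi-integrability computation for the Gau\ss{} term works in arbitrary codimension (cf.\ the remark following the proof), codimension one being needed only to make the Ricci quadratic term vacuous. Two cosmetic caveats: your displayed Codazzi equation has the same loose block/index bookkeeping as \eqref{system-gcr} (the bilinear structure, not the precise arrangement, is what matters), and applying Lemma \ref{lem-div-curl-lie-algebra} to the matrix block $\t\omega^\II_\epsi\wedge\omega^\II_\epsi$ should be done either entrywise or by applying it to the full $\so(n+1)$-valued $\omega_\epsi$ via $\omega_\epsi\wedge\omega_\epsi=\tfrac12[\omega_\epsi\wedge\omega_\epsi]$, which is legitimate here since $\dd\omega^\top_\epsi$ is also uniformly bounded in $\L^2_\loc$.
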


Compared to Theorem \ref{thm-weak-cty}, the main difficulty for the proof of Theorem \ref{thm-weak-cty-equi}
is to verify that the quadratic term $\t\omega^\II\wedge\omega^\II$, which is only bounded in $\L^1_\loc$ {\it a priori},
is in fact weakly continuous in the sense of distributions.
In this case, Lemma \ref{lem-div-curl-lie-algebra} fails to provide the desired conclusion.
Instead, we rely on Lemma \ref{lem-div-curl-equi} below, motivated by  Conti-Dolzmann-M\"{u}ller \cite{conti-div-curl}
for the classical case in Euclidean spaces.

\begin{lem}\label{lem-div-curl-equi}
	Let $(M,g)$ be a Riemannian manifold, $n=\dim M$, $1\leq \mu_i\leq n$, and {$1\leq p_i\leq \infty$} so that
	\begin{align*}
	\sum_{i=1}^k \frac{1}{p_i}=1,\qquad	\sum_{i=1}^k \mu_i:=s\leq n.
	\end{align*}
	Assume that $A^i_\epsi\in\L^{p_i}_{\loc}(M,\mathfrak{g}\otimes \wedge^{\mu_i}T^*M)$ be sequences of differential forms
	such that, as $\epsi\rightarrow0$,
	\begin{align}
	&A_\epsi^i\rightharpoonup A^i\qquad\text{weakly in }\L^{p_i}_{\loc},\\
	&\dd A_\epsi^i\Subset \W^{-1,1}_{\loc}(M, \mathfrak{g}\otimes \wedge^{\mu_i} T^*M),\\
	&[A^1_\epsi\wedge[A_\epsi^2\wedge[\dots\wedge A_\epsi^k]\cdots]] \quad\text{ is locally equi-integrable.}
	\end{align}
	Then
	$$
	A_\epsi^1\wedge\dots\wedge A_\epsi^k\, \longrightarrow\, A^1\wedge\dots\wedge A^k\qquad\mbox{in the sense of distributions}.
	$$
\end{lem}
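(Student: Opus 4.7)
My plan is to adapt the proof of Lemma \ref{lem-div-curl-lie-algebra} by inserting a truncation step that exploits the equi-integrability hypothesis, following the philosophy of \cite{conti-div-curl}. As in Lemma \ref{lem-div-curl-lie-algebra}, I would argue by induction on $k$ and focus on the bilinear case $k=2$; the inductive step then carries over with minor modifications. The first reduction is the local Hodge decomposition $A^i_\epsi = \dd\Psi^i_\epsi + \rho^i_\epsi$ provided by Lemma \ref{lem-hodge-decomp}, with $\rho^i_\epsi \to \rho^i$ strongly and $\Psi^i_\epsi \to \Psi^i$ strongly in $\L^{p_i}_\loc$; the latter strong convergence is obtained by interpolating the compactness of $\dd A^i_\epsi$ in $\W^{-1,1}_\loc$ against the $\L^{p_i}$-boundedness of $A^i_\epsi$ via the Rellich--Kondrachov theorem. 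All mixed products involving at least one $\rho^i_\epsi$ converge in distributions by weak--strong pairing, exactly as in Lemma \ref{lem-div-curl-lie-algebra}, so the problem reduces to weak continuity of the purely differentiated term $[\dd\Psi^1_\epsi \wedge \dd\Psi^2_\epsi]$.

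For this critical term, the direct integration-by-parts argument of Lemma \ref{lem-div-curl-lie-algebra} fails when $p_i \in \{1,\infty\}$, since $[\Psi^1_\epsi \wedge \dd\Psi^2_\epsi]$ is not obviously integrable. To bypass this, I would introduce a Lipschitz-truncation parameter $M > 0$ and replace each $\Psi^i_\epsi$ by a Lipschitz truncation $\Psi^{i,M}_\epsi$ constructed from Hardy--Littlewood maximal-function level sets in the spirit of Acerbi--Fusco, with $\Vert\Psi^{i,M}_\epsi\Vert_{\W^{1,\infty}} \leq CM$ and bad sets $F^{i}_{M,\epsi} = \{\Psi^{i,M}_\epsi \neq \Psi^i_\epsi\}$ whose measure tends to zero uniformly in $\epsi$ as $M\to\infty$. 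Since the truncated factors lie in $\L^\infty_\loc$ with uniform bounds, the integration-by-parts computation from the proof of Lemma \ref{lem-div-curl-lie-algebra} applies to $[\dd\Psi^{1,M}_\epsi \wedge \dd\Psi^{2,M}_\epsi]$ for each fixed $M$, yielding distributional convergence to $[\dd\Psi^{1,M} \wedge \dd\Psi^{2,M}]$ as $\epsi\to 0$.

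The final step is to send $M\to\infty$ while controlling the error $[\dd\Psi^1_\epsi \wedge \dd\Psi^2_\epsi] - [\dd\Psi^{1,M}_\epsi \wedge \dd\Psi^{2,M}_\epsi]$, which is supported on $F^1_{M,\epsi}\cup F^2_{M,\epsi}$. The equi-integrability of the original product $[A^1_\epsi \wedge A^2_\epsi]$ transfers, modulo the strongly convergent $\rho^i_\epsi$ corrections, to equi-integrability of $[\dd\Psi^1_\epsi \wedge \dd\Psi^2_\epsi]$, so the untruncated piece of the error vanishes uniformly in $\epsi$ since $|F^i_{M,\epsi}|\to 0$. The principal obstacle, exactly as in the scalar case of \cite{conti-div-curl}, lies in the truncated piece restricted to the bad sets: the naive pointwise bound $C M^2 |F_{M,\epsi}|$ is insufficient, and one must instead exploit the equi-integrability at the level of the truncation construction itself, either through a biting-lemma argument or by coupling the truncation level $M$ to the equi-integrability modulus of the product. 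The interplay between this refined measure estimate and the non-commutative Lie-bracket structure, which precludes the scalar-type cancellations available in \cite{conti-div-curl}, is the main technical challenge distinguishing Lemma \ref{lem-div-curl-equi} from its predecessor Lemma \ref{lem-div-curl-lie-algebra}.
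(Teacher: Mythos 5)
Your reduction step already contains a genuine gap, and your final step concedes a second one. For the reduction: Lemma \ref{lem-hodge-decomp} with strong $\L^{p_i}$ convergence of the remainders requires $\dd A^i_\epsi\Subset\W^{-1,p_i}_{\loc}$ (and $1<p_i<\infty$, whereas the statement allows the endpoints). The hypothesis here only gives $\dd A^i_\epsi\Subset\W^{-1,1}_{\loc}$, and interpolating this against the $\L^{p_i}$ bound yields compactness only in $\W^{-1,r}_{\loc}$ for $r<p_i$, never in $\W^{-1,p_i}_{\loc}$ itself; consequently $\rho^i_\epsi$ converges strongly only in $\L^r$ with $r<p_i$, which is not enough for the weak--strong pairings of the cross terms, and the purported transfer of equi-integrability from $[A^1_\epsi\wedge A^2_\epsi]$ to $[\dd\Psi^1_\epsi\wedge\dd\Psi^2_\epsi]$ is likewise unjustified. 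Recovering genuine $\W^{-1,p_i}$ compactness is precisely where the equi-integrability must do its work, and this is what the paper's proof accomplishes: it first applies the biting lemma \cite{Ball-Murat} to $|A^i_\epsi|^{p_i}$, producing $X^i_\epsi=A^i_\epsi\chi_{M\setminus E^i_\epsi}$ with $|E^i_\epsi|\rightarrow0$ and $|X^i_\epsi|^{p_i}$ equi-integrable, then shows $\dd X^i_\epsi\Subset\W^{-1,p_i}_{\loc}$ by testing against a $\kappa$-Lipschitz approximation of the test form (the argument of \cite{conti-div-curl}, applied to the test function rather than to any potential), and only then invokes Lemma \ref{lem-div-curl-lie-algebra} for the modified sequences; the equi-integrability of the product is used at the very end to discard the contribution of the small exceptional sets $E^1_\epsi\cup E^2_\epsi$.

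Your final step is not a proof but a statement of the difficulty: you acknowledge that the error term supported on the bad sets $F^i_{M,\epsi}$ of the Acerbi--Fusco-type truncation of the potentials cannot be handled by the bound $CM^2|F_{M,\epsi}|$, and you defer its resolution to ``a biting-lemma argument or coupling $M$ to the equi-integrability modulus'' without carrying either out. The obstruction is real: the truncated gradients $\dd\Psi^{i,M}_\epsi$ on the bad sets are new objects not controlled by the equi-integrability of the original product, so nothing in your hypotheses bounds that error. The paper avoids this entirely by never truncating the potentials: its bad sets come from the biting lemma applied to the factors, where the error is exactly an integral of the original product over sets of vanishing measure, which the equi-integrability hypothesis controls directly. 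As it stands, your proposal identifies the right ingredients (biting lemma, Lipschitz approximation, reduction to Lemma \ref{lem-div-curl-lie-algebra}) but assembles them in an order that leaves both the $\W^{-1,p_i}$ upgrade and the bad-set error estimate unproved.
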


\begin{proof}  %
	Without loss of generality, we assume  that $M$ is a bounded manifold; otherwise, we can prove the lemma on any bounded domain in $M$.
	We first deal with the case where $k=2$ and $\mu_1=\mu_2=1$.
	We divide the proof into three steps.
	
	\smallskip
	\noindent
	\textbf{1.} By the biting lemma ({\it cf}. \cite{Ball-Murat}), there exist subsets $E_\epsi^i\subset M$ such that, for each $1\leq i\leq k=2$,
	\begin{itemize}
		\item $|E_\epsi^i|\rightarrow0$ as $\epsi\rightarrow0;$
		\item Sequence $|A_\epsi^i|^{p_i}\chi_{M\setminus E_\epsi^i}$ is equi-integrable.
	\end{itemize}
	Note that $|\cup_{j=1}^k E_\epsi^j|\rightarrow0$.
	
	Consider the modified sequence $X^i_\epsi := A^i_\epsi \chi_{M\setminus E_\epsi^i}$.
	Then $X^i_\epsi$ is $\L^{p_i}$-equi-integrable and converges weakly to $X^i\in\L^{p_i}$.
	
	\medskip
	\noindent
	\textbf{2.} We now claim that $\dd X^i_\epsi\Subset\W^{-1,p_i}$ for each $i=1,2$.
	Indeed, without loss of generality, it suffices to prove that,
	if $\dd X_\epsi^1\rightarrow0$ in $\W^{-1,1}$, then $\dd X_\epsi^1\rightarrow0$ in $\W^{-1,p_1}$.
	In view of the fact that $X^i_\epsi$ is uniformly bounded in $\L^{p_i}$,
	it is clear that $\sup_{\epsi>0}\Vert \dd X^1_\epsi\Vert_{\W^{-1,p_1}}<\infty$.
	It suffices to show that $\Vert \dd X^1_\epsi\Vert_{\W^{-1,p_1}}\rightarrow0$; that is,
	for any arbitrary compactly supported $\phi\in \Gamma(M,\mathfrak{g}\otimes\wedge^{n-d_1-1}T^*M)$
	with $\Vert \nabla\phi\Vert_{\L^{p_2}}\leq1$,
	$$
	\big|\int_M [X^1_\epsi\wedge \dd\phi]\big| \rightarrow 0 \qquad \mbox{uniformly in $\phi$ as $\epsi\to 0$}.
	$$
	
	To do this, following \cite{conti-div-curl}, for a fixed constant $\kappa>0$ (to be chosen),
	we consider a compactly supported $\kappa$-Lipschitz approximation $\psi$ of $\phi$ such that
	\begin{align*}
	& \|\dd\psi\|_{L^\infty}\le \kappa,\\
	& \big|\{x\in M\,:\,\phi(x)\neq\psi(x)\}\cup \{x\in M\,\:\,\nabla\phi(x)\neq\nabla\psi(x)\}\big|
	=:|S|\leq C(M)\kappa^{-p_2}.
	\end{align*}
	Then we have
	\begin{align}
	\left|\int_M [X^1_\epsi\wedge \dd\phi]\right|
	&\leq \left|\int_M [X^1_\epsi\wedge (\dd\phi-\dd\psi)]\right|+\left|\int_M [X^1_\epsi\wedge \dd\psi]\right|
	\label{5.4a}\\
	&\leq \left|\int_S [X^1_\epsi\wedge (\dd\phi-\dd\psi)]\right|+\Vert \dd\psi\Vert_{\L^\infty}\Vert \dd X_\epsi^1\Vert_{\W^{-1,1}}
	\nonumber\\
	&\leq\left( \int_S |X^1_\epsi|^{p_1}\right)^{\frac{1}{p_1}}\left(\int_S |\dd\phi-\dd\psi|^{p_2}\right)^{\frac{1}{p_2}}
	+\kappa \Vert \dd X_\epsi^i\Vert_{\W^{-1,1}}
	\nonumber\\
	&\leq \left( \int_S |X^1_\epsi|^{p_1}\right)^{\frac{1}{p_1}} \left(\Vert \dd\phi\Vert_{\L^{p_2}(S)}
	+ \kappa|S|^{\frac{1}{p_2}}\right)+\kappa \Vert \dd X_\epsi^i\Vert_{\W^{-1,1}}.
	\nonumber
	\end{align}
	Since $|X^1_\epsi|^{p_1}=|A_\epsi^1|^{p_1}\chi_{M\setminus E_\epsi^1}$ is equi-integrable,
	we may assume that there exists a positive function $\rho=\rho(s)$ defined for $s>0$, which is independent of $\epsi$,
	such that $\rho(s)\rightarrow0$ whenever $s\rightarrow0$ and
	\begin{equation}\label{5.5a}
	\int_S |X^1_\epsi|^{p_1}\leq \rho(|S|).
	\end{equation}
	Inserting \eqref{5.5a} into \eqref{5.4a}, we obtain
	\begin{align}
	\left|\int_M [X^1_\epsi\wedge \dd\phi]\right|
	&\leq \rho(C(M)\kappa^{-p_2})^{\frac{1}{p_1}}\left(\Vert \dd\phi\Vert_{\L^{p_2}(S)}
	+ \kappa|S|^{\frac{1}{p_2}}\right)+\kappa \Vert \dd X_\epsi^1\Vert_{\W^{-1,1}}\label{5.6a}\\
	&\leq \rho(C(M)\kappa^{-p_2})^{\frac{1}{p_1}}\left(1+(C(M))^{\frac{1}{p_2}}\right)+\kappa \Vert \dd X_\epsi^1\Vert_{\W^{-1,1}}.\nonumber
	\end{align}
	Note that $\rho(C(M)\kappa^{-p_2})\rightarrow0$ as $\kappa\to \infty$;
	and if, in addition, $\kappa$ grows slower than $\Vert \dd X^1_\epsi\Vert_{\W^{-1,1}}^{-1}$,
	the right-hand side of \eqref{5.6a} goes to zero. Then $\kappa$ may be taken as, for example,
	\begin{equation}
	\kappa =\kappa(\epsi)=\frac{1}{\sqrt{\Vert \dd X^1_\epsi\Vert_{\W^{-1,1}}}},
	\end{equation}
	to lead to the claim for $X^1_\epsi$. The proof for $X^2_\epsi$ is the same.
	
	\medskip
	\noindent
	\textbf{3.} Combining the results in Steps 1--2 above with Lemma \ref{lem-div-curl-lie-algebra},
	we conclude that
	$$
	[X^1_\epsi\wedge X^2_\epsi]\,\weakstar\,[X^1\wedge X^2] \qquad \mbox{in the sense of distributions.}
	$$
	
	Note that $X^i_\epsi\rightharpoonup X^i$ in $\L^{p_i}$.
	Since $|E^i_\epsi|\rightarrow0$, $X^i=A^i$ in $\L^{p_i}$.
	Similarly, $|E^1_\epsi\cup E^2_\epsi|\rightarrow 0$,
	so that $[X^1_\epsi\wedge X^2_\epsi] \weakstar [X^1\wedge X^2]=[A^1\wedge A^2]$.
	
	\medskip
	The general case can be obtained by following exactly
	the same argument as the case considered above and the same lines as in Lemma \ref{lem-div-curl-lie-algebra} and \cite{div-curl-rrt}.
	This completes the proof.
\end{proof}

\medskip	
\begin{proof}[\bf Proof of Theorem \ref{thm-weak-cty-equi}]
	\noindent
	Let $u_\epsi$ be a sequence of weak isometric immersions of $(M,g)$ in $\R^{N}=\R^{n+1}$ for $p\geq2$.
	We recall that, in the codimension one case, the normal bundle $NM$ is a line bundle.
	Hence, we may view $\nu_\epsi:NM\rightarrow T\R^{n+1}|_{u(M)}$ simply as the Gau\ss{} map.
	We divide the proof into four steps.		
	
	\medskip
	\noindent
	{\bf 1.} Let $E\subset M$ be any bounded domain, and let $\alpha$ be an orthonormal coframe on $TM|_E$ with respect to $g$.
	As in the proof of Theorem \ref{thm-weak-cty}, we now have the following sequence of the Darboux coframes for $\epsi>0$:
	\begin{equation}
	\theta_\epsi = \tau_\epsi^*\alpha+\eta_\epsi,
	\end{equation}
	where $\eta_\epsi$ is the co-vector dual to $\nu_\epsi$ for each fixed $\epsi>0$.
	Similarly, let $\theta=\tau^*\alpha+\eta$.
	It is clear that $\theta_\epsi\rightharpoonup\theta$ weakly in $\W^{1,p}(E)$ and weak-star in $\L^\infty(E)$.
	
	\smallskip
	\noindent
	{\bf 2.} Let $\omega_\epsi$ be the connection form of $\theta_\epsi$ for $\epsi>0$.
	As usual, we decompose the connection form into its tangential, normal, and $\II$ parts;
	however, in codimension one, the normal connection is always zero, so that
	\begin{equation*}
	\omega_\epsi=\begin{pmatrix}
	\omega_\epsi^\top & -\omega_\epsi^\II\\[1mm]
	\t\omega_\epsi^\II &0
	\end{pmatrix}\in\so(n+1).
	\end{equation*}

	\smallskip
	\noindent
	{\bf 3.} We now claim that $\omega_\epsi^\top = \tau_\epsi^*\beta$ for $\epsi>0$,
	where $\beta$ is the connection form associated to the tangent coframe $\alpha$,
	{\it i.e.}, $\dd\alpha=\beta\wedge\alpha$.
	
	Indeed, we have
	\begin{align*}
	\dd\theta^\top_\epsi|_{TM}= \dd(\tau^*_\epsi\alpha)
	= \tau^*_\epsi \dd\alpha
	= \tau^*_\epsi (\beta\wedge\alpha)
	= \tau^*_\epsi \beta \wedge\theta^\top_\epsi|_{TM}.
	\end{align*}
	By uniqueness, this leads to the claim.
	
	As a consequence, we conclude that
	$\omega_\epsi^\top$
	converges to
	$\omega^\top$ in $\W^{1,p}(E)$ and strongly in $\L^2(E)$.
	In addition, we have
	\begin{align*}
	\dd\omega_\epsi^\top +\omega^\top_\epsi\wedge\omega^\top_\epsi
	= \tau_\epsi^*(\dd\beta+\beta\wedge\beta)
	= \tau_\epsi^*\Omega
	= \tau_\epsi^*\Rm(\alpha\wedge\alpha),
	\end{align*}
	where $\Rm$ is the Riemann curvature endomorphism associated to $g$.
	
	Furthermore, recall that $\theta_\epsi$ is weakly convergent in $\W^{1,2}(E)$.
	Hence, the connection form $\omega_\epsi$ is uniformly bounded in $\L^2(E)$,
	so that $\omega^\II_\epsi$ is uniformly bounded in $\L^2(E)$.
	This implies that the week limit still obeys the Codazzi equation.

	\smallskip
	\noindent
	{\bf 4.} We now prove that $\t\omega^\II_\epsi\wedge\omega^\II_\epsi$ is equi-integrable.
	Fix $\epsi>0$ and an arbitrary $F\subset E$.
	By the Gau\ss{}  equation, we have
	\begin{equation}
	\int_F|\t\omega_\epsi^\II\wedge\omega_\epsi^\II| = \int_F |\dd\omega^\top_\epsi+\omega^\top_\epsi\wedge\omega^\top_\epsi|
	= \int_F|\mathbf{Rm}(\alpha\wedge\alpha)|,
	\end{equation}
	which is an $\L^1$ function that is independent of $\epsi$, so that
	\begin{equation}
	\limsup_{F\subset E, |F|\rightarrow0}\sup_{\epsi>0}\int_F |\t\omega^\II_\epsi\wedge\omega^\II_\epsi|=0;
	\end{equation}
	that is, $\t\omega^\II_\epsi\wedge\omega^\II_\epsi$ is equi-integrable.
	
	The conclusion now follows from Lemma \ref{lem-div-curl-equi}, as in the proof of Theorem \ref{thm-weak-cty}.
\end{proof}

\begin{rk}
	The calculation for showing the equi-integrability of $\t\omega^\II\wedge\omega^\II$ holds for arbitrary codimension $N-n\geq1$,
	and hence the Gau\ss{} equation is always weakly continuous along a sequence of isometric immersions converging weakly in $\W^{2,2}_\loc$.
	However, if $N>n+1$, the Ricci equation contains a further quadratic term $\omega^\perp\wedge\omega^\perp$,
	for which the argument of the proof is unavailable, as the "normal curvature" is not controlled {\it a priori}.
\end{rk}

\subsection{$L^p$--energy minimizers of isometric embeddings}\label{minimizers}\label{subsection-minimisers}
As a simple application of Theorems \ref{thm-weak-cty} and \ref{thm-weak-cty-equi}, we provide a "selection criterion" for isometric immersions
based on the second fundamental form $\II$. 
Indeed, a general philosophy \cite{gromov,gromov-columbus} on isometric embeddings is that,
if a Riemannian manifold embeds in a larger manifold with sufficiently high codimension, then there should be an abundance of isometric immersions,
so that it is desirable to have analytical means of selecting a "best representative" of the isometric immersions of a manifold.
Only in the lower codimension do we expect to see some rigidity phenomena: see \eg \cite{bbg,cdls,pogorelov} and the references therein. Such a strategy is also commonplace in elasticity: see \eg \cite{ciarlet-2,ciarlet-3} and the references therein.

Our next result shows that the $\L^p$ norm of the second fundamental form $\II$ is such a criterion.

\begin{thm}
	Let $(M,g)$ be a closed Riemannian manifold, let the set of isometric immersions of $(M,g)$ into $\R^N$ be non-empty for some $N\in\N$, and let $p>2$.
	Then there exists a weak $\W^{2,p}$ isometric immersion $u\,:\,M\rightarrow \R^N$ minimising the $\L^p$ norm
	of the second fundamental form $\II$ among all isometric immersions.
	
	If $N=n+1$, the same result holds for $p\geq2$.
\end{thm}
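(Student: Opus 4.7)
The plan is a direct application of the Direct Method in the Calculus of Variations, where Theorems \ref{thm-weak-cty} and \ref{thm-weak-cty-equi} supply the closure of the admissible class under weak convergence, and the $L^p$ norm is (weakly) lower semi-continuous. Set $I:=\inf \|\II\|_{L^p(M)}$, the infimum being taken over the (by hypothesis non-empty) family of weak $\W^{2,p}$ isometric immersions of $(M,g)$ into $\R^N$. Clearly $I<\infty$. Choose a minimizing sequence $u_\epsi$ with $\|\II_\epsi\|_{L^p(M)}\to I$.

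First, I would establish uniform a priori bounds on the sequence. The isometry condition $\langle \dd u_\epsi,\dd u_\epsi\rangle=g$ forces $|\dd u_\epsi(X)|=|X|_g$, so each $u_\epsi$ is $1$-Lipschitz with respect to the intrinsic metric of $M$. After translating so that $u_\epsi(x_0)=0$ for a fixed base point $x_0\in M$, compactness of $M$ yields a uniform $L^\infty$ bound, and the isometry condition upgrades this to a uniform $W^{1,\infty}$ bound. Combined with the $L^p$ bound on the second fundamental form (equivalent, as observed in §\ref{subsection-immersions-sequences}, to control on $\nabla^2 u_\epsi$ in $L^p$), this gives a uniform bound of $u_\epsi$ in $W^{2,p}(M;\R^N)\cap W^{1,\infty}(M;\R^N)$.

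Next, by the Banach--Alaoglu theorem, I extract a (non-relabeled) subsequence with $u_\epsi\rightharpoonup u$ weakly in $W^{2,p}$ and weak-$*$ in $W^{1,\infty}$. For $p>2$, Theorem \ref{thm-weak-cty} applies and the limit $u$ is a weak $\W^{2,p}$ isometric immersion satisfying the Gau\ss--Codazzi--Ricci equations; for $p=2$ and $N=n+1$, the same conclusion follows from Theorem \ref{thm-weak-cty-equi}. Since $\II_\epsi\rightharpoonup \II_u$ weakly in $L^p$ (this weak convergence being guaranteed by the same two theorems), lower semi-continuity of the $L^p$ norm along weak $L^p$ convergence yields $\|\II_u\|_{L^p}\leq \liminf_{\epsi}\|\II_\epsi\|_{L^p}=I$, so that $u$ achieves the infimum.

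The main obstacle I expect is to verify that the weak limit $u$ satisfies the bi-Lipschitz homeomorphism clause built into the definition of a weak $\W^{2,p}$ isometric immersion; weak $W^{2,p}$ limits of bi-Lipschitz maps need not a priori remain injective or preserve their bi-Lipschitz constants. For $p>n$, Morrey's embedding $W^{2,p}\hookrightarrow C^{1,\alpha}$ combined with Rellich compactness allows the bi-Lipschitz property to be transported to the limit along the minimizing sequence, in the spirit of \cite{breuning} cited after Theorem \ref{thm-weak-cty}. In the residual range $2\leq p\leq n$, additional input is needed to rule out degeneration of the injectivity radius along the minimizing subsequence, presumably by exploiting the uniform $W^{1,\infty}$ control together with the strong $W^{1,q}$ convergence ($q>2$) coming from Rellich, and possibly by restricting the minimization to a fixed isotopy class of embeddings.
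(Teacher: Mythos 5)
Your proposal is correct and follows essentially the same route as the paper: the direct method with Theorems \ref{thm-weak-cty} and \ref{thm-weak-cty-equi} supplying closure of the class of weak isometric immersions under weak $\W^{2,p}$ convergence, and weak lower semicontinuity of the $\L^p$ norm (the paper even claims weak continuity of the functional on this class) yielding minimality of the limit. The bi-Lipschitz worry in your final paragraph is not an extra obstacle at the level of this argument, since the conclusion of Theorem \ref{thm-weak-cty} (and of Theorem \ref{thm-weak-cty-equi} in the codimension-one, $p=2$ case) already asserts that the weak limit is itself a weak $\W^{2,p}$ isometric immersion, so it can be used as a black box exactly as you do for the rest of the proof.
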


The proof follows at once from considering the functional
\begin{equation}
\mathcal{E}(u)=\int_M |\II|^p,
\end{equation}
defined for any immersion $\,u:\,M\rightarrow\R^N$.
This functional is clearly bounded from below.
Moreover, restricted to the set of isometric immersions, it is weakly continuous.
We then consider a minimising sequence $u_\epsi$, whose limit $u$ is a critical point of $\mathcal{E}$.
Theorem \ref{thm-weak-cty} indicates that, as $u_\epsi\rightharpoonup u$ in $\W^{2,p}$,
the weak limit $u$ is still an isometric immersion satisfying the Gau\ss--Codazzi--Ricci equations.

Theorem \ref{thm-weak-cty-equi} yields the last assertion when $N=n+1$.

\subsection{Semi-Riemannian manifolds}\label{subsection-semi}
Theorem \ref{thm-weak-cty} may be extended straightforwardly to the case of semi-Riemannian manifolds: in fact, the Riemannian character of either the base or the target manifold plays no role in the proof of Theorem \ref{thm-weak-cty}.
The difference lies in the structure groups of the underlying principal bundles: in the case of a semi-Riemannian manifold,
the Darboux bundle is a principal bundle, the structure group of which is the indefinite special orthogonal group $\SO(n,k)$.

More precisely, we recall that a quadratic form $g$ on a vector space has signature $(n,k)$ if the maximal dimension of the subspace
on which $g$ is positive definite is $n$, and the maximal dimension of the subspace on which $g$ is negative definite is $k$.
For a fixed quadratic form $g$ of signature $(n,k)$, we denote by $\O(n,k)$ the group of linear transformations leaving $g$ invariant,
and by $\SO(n,k)$ the subgroup of $\O(n,k)$ with determinant one. Its Lie algebra $\so(n,k)$ is the set of matrices $X\in M_{n+k}(\R)$ satisfying $\t XI_{n,k}=-I_{n,k}X$, where we have denoted 
$$I_{n,k} =\begin{pmatrix}
I_n & 0\\ 0 & I_k,
\end{pmatrix}$$
and $I_n$ and $I_k$ are identity matrices of dimension $n$ and $k$ respectively.

Just as in the Riemannian case, the Gau\ss--Codazzi--Ricci equations for immersed semi-Riemannian manifolds
are a consequence of the Cartan equations.
Let $(M,g)$ be a semi-Riemannian manifold with signature $(n,k)$ and  $(\widetilde{M},\tilde{g})$ another semi-Riemannian manifold with signature $(n+r,k+s)$.
A necessary condition for the existence of an immersion $u\,:\,(M,g)\rightarrow (\widetilde{M},\tilde{g})$ is that $r$ and $s$ be non-negative integers.
Then the connection form decomposes, as in \S \ref{section-gcr}, into a tangential part $\omega^\top\in\so(n,k)$,
a normal part $\omega^\perp\in\so(r,s)$, and a second fundamental form $\omega^\II$ so that
\begin{equation}
\begin{pmatrix}
\omega^\top &\t\omega^\II\\
-\omega^\II &\omega^\perp
\end{pmatrix}\in\so(n+k,r+s).
\end{equation}

Following the proof of Theorem \ref{thm-weak-cty}, we have the following theorem:

\begin{thm}\label{thm-weak-cty-semi}
	Let $(M,g)$ be a semi-Riemannian manifold with signature $(n,k)$,
	and let $u_\epsi$ be a sequence of weak $\W^{2,p}_\loc$ isometric immersions into  another semi-Riemannian manifold $(\widetilde{M},\tilde{g})$
	with signature $(n+r,k+s)$, and
	let $\II_\epsi\in\L^p_\loc(M,\Hom(TM\times TM,NM))$ be the second fundamental forms associated to each $u_\epsi$ and satisfy
	\begin{equation}
	\sup_{\epsi>0}\Vert\II_\epsi\Vert_{\L^p(E)}<\infty \qquad\mbox{for $p>2$ and for any bounded domain $E\subset M$}.
	\end{equation}
	Then, up to a subsequence, $u_\epsi\rightharpoonup u$ in $\W^{2,p}_{\loc}$ such that $u$ is still a weak $\W^{2,p}_{\loc}$ isometric immersion
	and any orthonormal coframe on $(M,g)$ may be extended into a Darboux coframe in $(\widetilde{M},\tilde{g})$,
	adapted to $u$ and satisfying the Gau\ss--Codazzi--Ricci equations \eqref{system-gcr} in the sense of distributions.
\end{thm}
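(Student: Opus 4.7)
The plan is to adapt the proof of Theorem \ref{thm-weak-cty}, more precisely its extension to general ambient manifolds given in \S\ref{subsection-proof-weak-cty-ambient}, to the semi-Riemannian setting, tracking carefully the points in the original argument that use positive-definiteness of the metric and verifying that each can be handled with the indefinite inner product.

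First, I would construct the Darboux coframes. Given a bounded domain $E\subset M$ and a local $g$-orthonormal coframe $\alpha$ on $TM|_E$, which exists in the indefinite sense by a signature-preserving Gram--Schmidt procedure, I would construct on $NM|_E$ a sequence of coframes $\eta_\epsi$ that are $\widehat{g}_\epsi:=\nu_\epsi^*\tilde{g}$-orthonormal. Since $u_\epsi\rightharpoonup u$ in $\W^{2,p}_{\loc}\cap\W^{1,\infty}_{\loc}$ and $p>2$, the induced normal metrics $\widehat{g}_\epsi$ converge in $\L^\infty_{\loc}$ to $\widehat{g}:=\nu^*\tilde{g}$, so the orthonormalization produces $\eta_\epsi\rightharpoonup\eta$ in $\W^{1,p}_{\loc}\cap\L^\infty_{\loc}$. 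The Darboux coframes $\theta_\epsi=\tau_\epsi^*\alpha+\pi_\epsi^*\eta_\epsi$ then take values in a principal $\SO(n+r,k+s)$-bundle. Lemmas \ref{lem-estimate-tau}--\ref{lem-generating-darboux} are purely analytic, using only the bi-Lipschitz property of $u_\epsi$, uniform $\W^{2,p}$-bounds, and an auxiliary positive-definite background metric on $T\widetilde{M}$ to measure Sobolev norms, so they carry over verbatim.

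Next, inverting the first Cartan equation $\dd\theta_\epsi=\omega_\epsi\wedge\theta_\epsi$ yields a connection form $\omega_\epsi\in\so(n+r,k+s)$ uniformly bounded in $\L^p_{\loc}(E)$, hence weakly convergent to some $\omega$. I would then invoke the weak continuity of curvature (Proposition \ref{thm-main-chapter-connections}), whose proof rests on Lemma \ref{lem-div-curl-lie-algebra} and is formulated for an arbitrary Lie algebra $\mathfrak{g}$. The only role of compactness of the structure group in the earlier exposition was to guarantee an $\mathrm{Ad}$-invariant positive-definite inner product on $\mathfrak{g}$; for the analytical purposes at hand, any fixed positive-definite inner product on the matrix algebra (e.g.\ the Frobenius inner product on $\so(n+r,k+s)\subset M_{n+k+r+s}(\R)$) suffices to express the distributional pairings. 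Passing to the limit in the second Cartan equation $\dd\omega_\epsi+\omega_\epsi\wedge\omega_\epsi=\widetilde{\Rm}(\theta_\epsi\wedge\theta_\epsi)$ proceeds exactly as in \S\ref{subsection-proof-weak-cty-ambient}: the right-hand side is uniformly bounded in $\L^\infty_{\loc}$ since $\widetilde{\Rm}$ is smooth and $\theta_\epsi$ is uniformly bounded in $\L^\infty_{\loc}$, and the interpolation step promotes $\dd\omega_\epsi\Subset\W^{-1,2}_{\loc}(E)$. Block-decomposing the limit $\omega$ along $\so(n,k)\oplus\so(r,s)\oplus(\II\text{-block})$ yields the Gau\ss--Codazzi--Ricci system \eqref{system-gcr} in the sense of distributions.

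The step I expect to be the main obstacle is ensuring that the signature-preserving orthonormalization of the normal frames behaves well under the weak convergence $\widehat{g}_\epsi\to\widehat{g}$. In Riemannian signature, Gram--Schmidt is a smooth (in fact analytic) operation on the space of inner products, but in indefinite signature the process can degenerate whenever an intermediate vector becomes null. I would circumvent this by choosing the initial coordinate trivialization of $NM|_E$ so that the $N-n$ coordinate vectors are in generic position with respect to the limiting metric $\widehat{g}$, ensuring that all required orthonormalization steps are non-degenerate uniformly on $E$ for all sufficiently small $\epsi>0$. With this choice, the orthonormalized coframe is a rational function of $\widehat{g}_\epsi$ with no singularities on $E$ for small $\epsi$, which preserves the $\W^{1,p}_{\loc}\cap\L^\infty_{\loc}$-convergence needed downstream. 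The remainder of the argument then follows \S\ref{subsection-proof-weak-cty-ambient} verbatim.
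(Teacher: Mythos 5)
Your proposal is correct and takes essentially the same route as the paper: the paper's own argument for Theorem \ref{thm-weak-cty-semi} is simply to repeat the proof of Theorem \ref{thm-weak-cty} (in the form given in \S\ref{subsection-proof-weak-cty-ambient}), observing that the Riemannian character of base and target plays no role and that the only change is the structure group, now $\SO(n+r,k+s)$, with the connection form block-decomposing into $\omega^\top\in\so(n,k)$, $\omega^\perp\in\so(r,s)$, and the $\II$-block. You in fact supply more detail than the paper, which treats both points you flag --- the non-compactness of the structure group (harmless, since only a fixed auxiliary inner product is needed for the div-curl machinery) and the possible degeneracy of the indefinite orthonormalization of the normal frames --- as straightforward and does not discuss them.
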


\begin{rk}
	The weak continuity of the Cartan equation has also been studied specifically in {\rm \cite{chen-li-semi}}. The compensated compactness theorem on semi-Riemannian manifolds proved in \cite{chen-li-semi} may be used to give an alternative proof of Lemma {\rm \ref{lem-div-curl-lie-algebra}},
	and thus Proposition {\rm \ref{thm-main-chapter-connections}}.
\end{rk}

\medskip
\appendix

\section{Convergence of the Metric Tensors}\label{section-very-weak}
We prove a very mild sufficient condition for the weak continuity of the metric tensor along a sequence of isometric immersions.

\begin{thm}\label{thm-very-weak-II}
	Let $(M,g)$ be a Riemannian manifold.
	Consider a sequence of weak isometric immersions $u_\epsi$ such that, for any bounded domain $E\subset M$,
	\begin{equation}\label{eqn-hypothesis-very-weak}
	\sup_{\epsi>0}\Vert H_\epsi\Vert_{\mathcal{M}(E)}<\infty,
	\end{equation}
	where $H_\epsi=\tr(g^{-1}\II_\epsi)$ are the vector-valued mean curvatures,
	and
	$$
	u_\epsi\weakstar u  \qquad \mbox{in $W^{1,\infty}_{\loc}$ $($up to a non-relabelled subsequence$)$}.
	$$
	Then $u$ is an isometric immersion.
\end{thm}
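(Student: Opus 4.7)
The goal is to show $u^*\euc=g$ a.e., or equivalently $\sum_k \partial_a u^k\partial_b u^k=g_{ab}$ in local coordinates. Since each $u_\epsi$ already satisfies the corresponding pointwise identity, the problem reduces to passing to the limit in these quadratic expressions of the gradients. Weak-$*$ convergence in $W^{1,\infty}_{\loc}$ alone does not suffice for such products; the plan is to upgrade this to strong $L^p_{\loc}$ convergence of $\nabla u_\epsi$ for every $p<\infty$, after which the pull-back metric identity for $u$ will be inherited from that of the $u_\epsi$.

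The starting point is the classical identity $\Delta_g u_\epsi=H_\epsi$, valid componentwise for an isometric immersion into Euclidean space. Together with hypothesis \eqref{eqn-hypothesis-very-weak}, each scalar component $u_\epsi^k$ is thus uniformly Lipschitz on compacts with Laplacian uniformly bounded in $\M_\loc$. Arzelà--Ascoli applied to the $W^{1,\infty}_\loc$ bound gives $u_\epsi\to u$ locally uniformly. After extracting a further subsequence, $H_\epsi\weakstar H$ in $\M_\loc$; passing to the limit in $\Delta_g u_\epsi=H_\epsi$ in $\mathcal{D}'$ then yields $\Delta_g u=H$, so the limit carries a measure-valued Laplacian of the same type.

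To upgrade to strong convergence, I would set $v_\epsi^k:=u_\epsi^k-u^k$ and test the equation $\Delta_g v_\epsi^k=H_\epsi^k-H^k$ against the compactly supported Lipschitz function $\phi v_\epsi^k$, where $\phi\in C_c^\infty(M)$ is nonnegative. Since $\nabla v_\epsi^k\in L^\infty_\loc$, integration by parts against Lipschitz test functions is legitimate and produces
\[ \int_M \phi\,|\nabla v_\epsi^k|_g^2\,\dd V_g + \int_M v_\epsi^k\,\langle\nabla\phi,\nabla v_\epsi^k\rangle_g\,\dd V_g = -\int_M \phi\, v_\epsi^k\,\dd(H_\epsi^k-H^k). \]
The right-hand side is bounded by $\|\phi v_\epsi^k\|_{L^\infty(\supp\phi)}\sup_\epsi\|H_\epsi^k-H^k\|_{\M(\supp\phi)}$ and tends to zero thanks to the uniform convergence $v_\epsi^k\to 0$ together with the uniform mass bound. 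The second term on the left is controlled by $\|v_\epsi^k\|_{L^\infty(\supp\phi)}\|\nabla\phi\|_{L^\infty}\|\nabla v_\epsi^k\|_{L^2(\supp\phi)}$ and vanishes since $\nabla v_\epsi^k$ remains bounded in $L^\infty$. Hence $\int_M \phi|\nabla v_\epsi^k|_g^2\,\dd V_g\to 0$ for every such $\phi$, i.e.\ $\nabla u_\epsi^k\to\nabla u^k$ strongly in $L^2_\loc$; interpolating with the uniform $L^\infty$ bound upgrades this to strong convergence in $L^p_\loc$ for every $p<\infty$.

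From strong $L^p_\loc$ convergence of the gradients, the products $\partial_a u_\epsi^k\partial_b u_\epsi^k$ converge strongly in $L^{p/2}_\loc$ for every $p<\infty$, so summing over $k$ gives $\sum_k\partial_a u^k\partial_b u^k=\lim_{\epsi\to 0}\sum_k\partial_a u_\epsi^k\partial_b u_\epsi^k=g_{ab}$ a.e., proving that $u$ is an isometric immersion. The main technical point I anticipate is the rigorous setup of the integration by parts: one must justify pairing the Radon measure $H_\epsi^k-H^k$ with the compactly supported Lipschitz function $\phi v_\epsi^k$ and check that the resulting bound is uniform in $\epsi$. This is routine, by approximating $\phi v_\epsi^k$ in the $C^0$-topology by smooth compactly supported functions and using the continuous action of Radon measures on $C_c^0$.
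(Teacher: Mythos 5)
Your argument is correct and reaches the same conclusion, but the mechanism by which you upgrade the weak-$*$ $\W^{1,\infty}_\loc$ convergence to strong convergence of the gradients is genuinely different from the paper's. Both proofs start identically, from the componentwise identity $\laplace_g u_\epsi=H_\epsi$ and the uniform mass bound, and both end identically, by passing to the limit in the quadratic constraint $\langle \dd u_\epsi,\dd u_\epsi\rangle=g$ once $\nabla u_\epsi\to\nabla u$ strongly in $\L^2_\loc$. In the middle, the paper argues by compactness: bounded measures embed compactly into $\W^{-1,q}_\loc$ for $q<\frac{n}{n-1}$, so $\{u_\epsi\}$ is compact in $\W^{1,q}_\loc$ by elliptic theory, and interpolation with the uniform $\W^{1,\infty}_\loc$ bound gives pre-compactness in $\W^{1,2}_\loc$. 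You instead run a Caccioppoli-type energy estimate on $v_\epsi=u_\epsi-u$, testing $\laplace_g v_\epsi^k=H_\epsi^k-H^k$ against $\phi v_\epsi^k$ and exploiting the local uniform convergence $v_\epsi\to0$ (Arzel\`a--Ascoli) together with the uniform mass bound to make the right-hand side vanish; this avoids negative Sobolev embeddings and elliptic estimates altogether and is arguably more elementary, at the price of having to justify the integration by parts against a merely Lipschitz test function. On that last point, your suggested justification is slightly under-specified: approximating $\phi v_\epsi^k$ only in the $C^0$-topology controls the pairing with the measure but not the term $\int\langle\nabla(\cdot),\nabla v_\epsi^k\rangle_g$; one should instead mollify, which gives uniform convergence of the approximants \emph{and} a.e.\ convergence of their gradients with uniform $\L^\infty$ bounds, after which dominated convergence closes the identity. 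With that small repair the proof is complete, and note that (like the paper's own proof) it establishes the isometry identity $\langle\dd u,\dd u\rangle=g$ almost everywhere, which is what is meant by the conclusion here.
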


\begin{proof} We divide the proof into two steps.

	\medskip		
	\noindent
	{\bf 1}. Let $u_\epsi\,:\,(M,g)\rightarrow\R^N$ be a sequence of isometric immersions
	with locally uniformly bounded second fundamental forms.
	Then $\laplace_gu_\epsi$ has locally uniformly bounded total mass. Indeed, it is easy to verify that, in any local coordinate system on $M$,
	a weak $\W^{2,p}$ isometric immersion $u_\epsi$ satisfies
	\begin{equation*}
	\partial^2_{ij}u_\epsi = \Gamma^k_{ij}\partial_k u_\epsi + \II^m_{ij,\epsi}\nu_{m,\epsi},
	\end{equation*}
	where $\nu_{m,\epsi}$ are $\R^N$-valued orthonormal vectors that are normal to $u_\epsi(M)$ and the indices range over $1\leq i,j\leq n$ and $1\leq m\leq N-n$. Taking the trace, we see that
	\begin{equation}\label{eqn-harmonic-map}
	\laplace_g u_\epsi =	H^\epsi=g^{ij}\II^m_{ij,\epsi}\nu_{m,\epsi}.
	\end{equation}
	We conclude directly from \eqref{eqn-hypothesis-very-weak} that
	$\laplace_gu_\epsi$ has a uniformly bounded total mass locally.
	
	\smallskip
	\noindent
	{\bf 2.}
	Let $E\subset M$ be a bounded
	domain.
	From Step 1, $\{u_\epsi\}$ is a uniformly bounded sequence in $\W^{1,\infty}(E)$ such that
	\begin{equation*}
	\sup_{\epsi>0}\Vert \laplace_g u_\epsi\Vert (E)<\infty.
	\end{equation*}
	Then
	$$
	\{\laplace_g u_\epsi \}\qquad \mbox{is compact in $W^{-1,q}_\loc$ for $q\in (1, \frac{n}{n-1})$},
	$$
	which implies that
	$$
	\{u_\epsi\} \qquad \mbox{is compact in $W^{1,q}_\loc$ for $q\in (1, \frac{n}{n-1})$}.
	$$
	Combining the uniform boundedness of $u_\epsi$ in $\W^{1,\infty}(E)$ with interpolation, we
	conclude that
	$$
	\{u_\epsi\} \qquad \mbox{is pre-compact in $\W^{1,2}(E)$.}
	$$
	By Sobolev embedding, this is sufficient to pass to the limit almost everywhere in the isometry constraint.
\end{proof}

\begin{rk}\label{rk-nash-kuiper}
	The isometry constraint is not continuous with respect to the weak* topology of $\W^{1,\infty}$, and the result of Theorem \ref{thm-very-weak-II} is false if hypothesis \eqref{eqn-hypothesis-very-weak} is removed.
	Indeed, let $u\,:\,M\rightarrow\R^N$ be a smooth short immersion of $(M,g)$ in $(\R^N, \euc)$, i.e.,
	$\langle \dd u, \dd u\rangle<g$ in the sense of bilinear forms.
	As $M$ is compact, this is always possible, provided that the codimension $N-n$ is large enough.
	By the Nash--Kuiper theorem {\rm \cite{nash-1,kuiper}}, $u$ may be uniformly approximated by a sequence of
	immersions $u_\epsi\in C^{1}(M;\R^N)$ such that
	\begin{align*}
	u_\epsi\xrightarrow{C^0} u,\qquad u_\epsi^*\euc=\langle \dd u_\epsi,\dd u_\epsi\rangle = g.
	\end{align*}
	Taking the sup-norms on both sides of the second equation above, we have
	$$
	\Vert \dd u_\epsi\Vert_{\L^\infty}\leq \Vert g\Vert_{\L^\infty}^{\frac{1}{2}}.
	$$
	Therefore, up to a subsequence, $u_\epsi\weakstar u$ in the weak-star topology of $\W^{1,\infty}$.
	This shows that the isometry constraint is not continuous with respect to the weak* topology of $\W^{1,\infty}$.
\end{rk}

\section{Div-Curl Lemmas for Elliptic Complexes}\label{appendix-div-curl}
In this appendix, for self-containedness and clarity of the exposition of the paper,
we present some facts about elliptic complexes on manifolds
and present a div-curl lemma for this case.
Without loss of generality, we assume that $M$ is a {\it closed} Riemannian manifold in this appendix; otherwise, we may
restrict our presentation to a bounded domain in $M$.

Let $\mathfrak{E}:=\{E(i)\}_{i\in\N}$ be a sequence of fiber bundles on $M$.
A complex is a sequence of linear maps $\mathfrak{D}:=\{\DD (i)\,:\, \Gamma(M,E(i))\rightarrow\Gamma(M,E(i+1))\}_{i\in\N}$.
The complex $\mathfrak{D}$ is differential if $\DD (i+1)\circ \DD (i)=0$, or equivalently if the sequence:
\begin{equation*}
\cdots\xrightarrow{\DD(i-1)}\Gamma(M,E(i))\xrightarrow{\DD(i)}\Gamma(M,E(i+1))\xrightarrow{\DD(i+1)}\Gamma(M,E(i+2))\xrightarrow{\DD(i+2)}\cdots
\end{equation*}
is exact.

We are interested in the case that $\mathfrak{E}$
is a complex of vector bundles and $\mathfrak{D}$ is a sequence of linear differential operators.
We assume that $\mathfrak{E}$
is also equipped with an inner product.
Let $*$ denote the associated Hodge duality operator, and $(*1)$ the volume form.
For an element $\phi\in\Gamma(M,E(i))$ and $1\leq p<\infty$, we can define the $\L^p$ norms
\begin{equation*}
\Vert \phi\Vert_{\L^p}^p=\int_M |\phi|^p(*1).
\end{equation*}
The Lebesgue spaces on $\mathfrak{E}$
are then the completion of $\Gamma(M,E(i))$ with respect to the above norm.
We also assume that $\Gamma(M,E(i))$ is equipped with a sequence of fixed
connections $\nabla(i)$,
so that the Sobolev spaces $\W^{k,p}(M,E(i))$ may be defined by acting with $\nabla(i)$ as
\begin{equation*}
\Vert \phi\Vert^p_{\W^{1,p}} =\int_M |\phi|^p(*1)+ \int_M |\nabla(i)\phi|^p(*1),
\end{equation*}
and the spaces $\W^{-k,p}$ are defined by duality in the standard way.

Fix local coordinates on $M$.  In the associated local trivializations,
$$
\{\partial_j\,:\,  1\leq j\leq \dim(E(i))\}
$$
of the bundles $E(i)\rightarrow M$, the operators $\DD(i)$ take the form:
\begin{equation*}
\DD(i) = \sum_{|\mu|\leq k(i)} P^\mu\partial_\mu,
\end{equation*}
where $\mu\in\N^{\dim(E(i))}$ is a multi-index, $\partial_\mu:=\otimes_{1\leq j \leq \dim(E(i))} \partial^{\mu_j}$, and
$$
P^\mu=P^\mu(i)\in{\Gamma(M,\Hom(E(i),E(i+1)))}=\Gamma(M,\mathscr{L}(E(i), E(i+1))).
$$
The operator $\DD(i)$ is of order $k(i)$.

Recall that the symbol of $\DD(i)$,
denoted by $\sigma_{\DD(i)}$, is a vector bundle complex
\begin{equation*}
\cdots\xrightarrow{\sigma_{\DD(i-1)}}\pi^*(E(i))\xrightarrow{\sigma_{\DD(i)}}\pi^*(E(i+1))\xrightarrow{\sigma_{\DD(i+1)}}\cdots
\end{equation*}
where we write $T^*M\xrightarrow{\pi} M$ for the cotangent bundle,
$\pi^*E$ for the vector bundle $\pi^*E(i)\rightarrow T^*M$ pulled back over $T^*M$ by the fibration map $\pi$
(see Figure \ref{figure-pullback-cotangent-bundle}).
Thus, for a fixed $\xi\in T^*_xM$, ${\sigma_{\DD(i)}(\xi)\,:\,E(i)_x\rightarrow E(i+1)_x}$ is a linear map,
given as the symbol of the differential operator $\DD(i)$ in the coordinates on $E(i)$.
Alternatively, it may be defined directly as follows:
Let $f\in\Gamma(M,E(i))$ be such that $f(x)=v$, and let $g\in C^\infty(M)$ be such that $g(x)=0$ and $\dd g|_x = \xi$.
Then
$$
\sigma_{\DD(i)}(\xi)v=\DD(i)(\frac{g^{k(i)}f}{k(i)!})(x)\in E(i+1)
$$
so that
$$
\sigma_{\DD(i)}(\xi)\in\mathscr{L}(E(i),E(i+1)).
$$
The complex, $\mathfrak{D}=\{\DD(i)\}_{i\in\N}$, is elliptic if $\sigma_{\DD(i)}$ is exact for all $i\in\N$.

\begin{figure}[h!]
	\centering
	\begin{tikzcd}
	\pi^*E(i) \arrow[r] \arrow[d]
	& E(i) \arrow[d] \\
	T^*M \arrow[r, "\pi"]
	& M
	\end{tikzcd}
	\caption{Pull-back of the bundles $E(i)\rightarrow M$ on the cotangent bundle by the fibration $\pi:T^*M\rightarrow M$. This diagram is commutative.}\label{figure-pullback-cotangent-bundle}
\end{figure}

The $\L^2$ adjoint of $\DD(i)$ is denoted by $\DD^*(i)$ and forms a new complex
\begin{equation*}
\cdots\xrightarrow{\DD^*(i+1)}\Gamma(M,E(i+1))\xrightarrow{\DD^*(i)}\Gamma(M,E(i))\xrightarrow{\DD^*(i-1)}\cdots
\end{equation*}
which is also exact if complex $\mathfrak{\DD}$
is differential; similarly for its symbol $\{\sigma_{\DD^*(i)}\}_{i\in\N}$.
The Hodge Laplacian $\laplace(i)$ of complex $\{\DD(i)\}_{i\in\N}$ is the sequence of operators given by
\begin{equation*}
\laplace(i)=\DD(i-1)\DD^*(i-1) + \DD^*(i)\DD(i)\,:\,\Gamma(M,E(i))\rightarrow\Gamma(M,E(i)).
\end{equation*}
It is well-known that, if $\mathfrak{D}$ is an elliptic complex,
the Hodge Laplacian $\laplace(i)$ is elliptic as $\sigma_{\laplace(i)}$ is an isomorphism.

The following commutation property is standard:

\begin{lem}\label{lem-hodge-commute}
	Let $M$ be a smooth closed Riemannian manifold,
	and let $(\mathfrak{E},\mathfrak{D})$ be an elliptic complex on $M$, $\mathfrak{D}^*$ its adjoint complex,
	and $\{\laplace(i)\}_{i\in \N}$ the associated Hodge Laplacian.
	Then
	$$
	\laplace(i) \DD(i)=\DD(i)\laplace(i).
	$$
	Moreover, for  the inverse Laplacian $\laplace^{-1}(i)$, we have
	$$
	\laplace^{-1}(i)\DD(i)=\DD(i)\laplace^{-1}(i).
	$$
\end{lem}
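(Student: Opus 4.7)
The plan is to split the proof into two stages: a direct algebraic computation for the first identity, and a standard Hodge-theoretic reduction for the inverse.

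For the first identity, which I would interpret as the intertwining $\laplace(i+1)\DD(i)=\DD(i)\laplace(i)$ (the two Laplacians necessarily acting on different bundles), the computation is a one-liner. Expanding $\laplace(j)=\DD(j-1)\DD^*(j-1)+\DD^*(j)\DD(j)$ on both sides and using the differential property $\DD(j+1)\DD(j)=0$ yields
\begin{align*}
\laplace(i+1)\DD(i)&=\DD(i)\DD^*(i)\DD(i)+\DD^*(i+1)\DD(i+1)\DD(i)=\DD(i)\DD^*(i)\DD(i),\\
\DD(i)\laplace(i)&=\DD(i)\DD(i-1)\DD^*(i-1)+\DD(i)\DD^*(i)\DD(i)=\DD(i)\DD^*(i)\DD(i),
\end{align*}
so the two expressions coincide.

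For the Green's operator, I would invoke the Hodge decomposition on the closed manifold $M$: $\Gamma(M,E(i))=\ker\laplace(i)\oplus\laplace(i)\Gamma(M,E(i))$ in the $\L^2$-orthogonal sense, and $\laplace^{-1}(i)$ is defined as the inverse of $\laplace(i)$ on the second summand and zero on the first. Two observations then allow one to conclude: first, any harmonic $h\in\ker\laplace(i)$ satisfies $\|\DD(i)h\|_{\L^2}^2+\|\DD^*(i-1)h\|_{\L^2}^2=\langle\laplace(i)h,h\rangle=0$, so $\DD(i)h=0$; second, for $\psi\in(\ker\laplace(i))^\perp$ and $h'\in\ker\laplace(i+1)$, the pairing $\langle\DD(i)\psi,h'\rangle=\langle\psi,\DD^*(i)h'\rangle=0$ shows that $\DD(i)$ sends $(\ker\laplace(i))^\perp$ into $(\ker\laplace(i+1))^\perp$. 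Writing any $\phi\in\Gamma(M,E(i))$ as $\phi=h+\laplace(i)\psi$ with $\psi\perp\ker\laplace(i)$, both $\laplace^{-1}(i+1)\DD(i)\phi$ and $\DD(i)\laplace^{-1}(i)\phi$ reduce to $\DD(i)\psi$, using the first identity to commute $\laplace(i)$ across $\DD(i)$.

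The main obstacle is essentially notational rather than mathematical: the displayed formula suppresses the dependence of $\laplace$ on the bundle index, and one must carefully track which operator acts on which bundle throughout. The content of the lemma is a one-line computation for the first identity and a routine Hodge-theoretic argument for the inverse, relying on closedness of $M$ to guarantee both the orthogonal decomposition and the existence of Green's operator.
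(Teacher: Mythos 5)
Your proposal is correct; the paper in fact states this lemma without proof (labelling it ``standard''), and your argument is exactly the standard one: the intertwining $\laplace(i+1)\DD(i)=\DD(i)\laplace(i)$ by expanding and using $\DD(i+1)\DD(i)=0=\DD(i)\DD(i-1)$, followed by the Hodge decomposition to commute the Green operator across $\DD(i)$. Your two caveats are also the right ones to flag: the displayed identity only typechecks with the index shift you make explicit, and since $\ker\laplace(i)$ need not vanish on a closed manifold, $\laplace^{-1}(i)$ must be read as the Green operator (zero on harmonics, inverse on the orthogonal complement), which your use of $\DD(i)h=0$ for harmonic $h$ and $\DD(i)\bigl(\ker\laplace(i)\bigr)^{\perp}\subset\bigl(\ker\laplace(i+1)\bigr)^{\perp}$ handles correctly, tightening a point the paper glosses over.
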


\medskip
For our purpose, it suffices to consider the case that all the operators $\DD(i)$ are first-order operators.
We note that it is possible to derive the entire theory expounded in this section for general elliptic complexes of arbitrary order $k\in\N$,
in which the Hodge Laplacian is an operator of order $2k$ in general.

By the standard elliptic theory, the Hodge Laplacian $\laplace(i)$ of a first-order elliptic complex admits a continuous inverse $\laplace^{-1}(i):\W^{-1,p}(M,E(i))\rightarrow\W^{1,p}(M,E(i))$ for $1<p<\infty$.
The next observation is essentially due to \cite{div-curl-rrt}.

\begin{lem}\label{lem-hodge-decomp}
	Consider a sequence of sections $\alpha_\epsi$ so that $\alpha_\epsi \rightarrow\alpha$ weakly in $L^p$
	and $\DD(i)\alpha_\epsi$ is compactly contained in $\W^{-1,p}$.
	Then there exist $\rho_\epsi\in\Gamma(M,E(i))$ and $\psi_\epsi\in\Gamma(M,E(i-1))$ satisfying that
	$\rho_\epsi\rightarrow\rho$ and $\psi_\epsi\rightarrow\psi$ strongly in $\L^p$ and
	$\DD(i)\psi_\epsi\rightharpoonup \DD(i) \psi$ weakly in $\L^p$ such that $\alpha_\epsi = \DD(i)\psi_\epsi+\rho_\epsi$
	and $\alpha=\DD(i)\psi+\rho$.
\end{lem}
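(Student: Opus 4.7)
The strategy is a Hodge-type decomposition using the Green's operator $\laplace^{-1}$. Writing $\mathrm{id} = \laplace(i)\,\laplace^{-1}(i)$ and expanding $\laplace(i) = \DD(i-1)\DD^*(i-1) + \DD^*(i)\DD(i)$, I set
\begin{equation*}
\psi_\epsi := \DD^*(i-1)\laplace^{-1}(i)\alpha_\epsi \in \Gamma(M,E(i-1)), \qquad \rho_\epsi := \DD^*(i)\DD(i)\laplace^{-1}(i)\alpha_\epsi \in \Gamma(M,E(i)),
\end{equation*}
so that $\alpha_\epsi = \DD(i-1)\psi_\epsi + \rho_\epsi$ holds identically (I read the $\DD(i)\psi_\epsi$ of the statement as the unique operator $E(i-1)\to E(i)$, i.e.\ $\DD(i-1)$). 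The limits $\psi, \rho$ are defined by the same formulas applied to $\alpha$.

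For $\psi_\epsi$, the weak convergence $\alpha_\epsi \rightharpoonup \alpha$ in $\L^p$ together with the continuity of $\laplace^{-1}(i):\L^p\to \W^{2,p}$ (a consequence of the $\W^{-1,p}\to\W^{1,p}$ bound upgraded by one order of regularity, standard for elliptic operators) gives that $\laplace^{-1}(i)\alpha_\epsi \rightharpoonup \laplace^{-1}(i)\alpha$ weakly in $\W^{2,p}$. Applying the first-order operator $\DD^*(i-1)$ then yields a sequence bounded in $\W^{1,p}$ converging weakly to $\psi$. Strong $\L^p$ convergence $\psi_\epsi \to \psi$ is then a consequence of the Rellich--Kondrachov embedding $\W^{1,p}(M)\hookrightarrow\L^p(M)$, valid on the closed manifold $M$.

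For $\rho_\epsi$, I use the commutation relation of Lemma~\ref{lem-hodge-commute}, namely $\DD(i)\,\laplace^{-1}(i)=\laplace^{-1}(i+1)\,\DD(i)$, to rewrite
\begin{equation*}
\rho_\epsi = \DD^*(i)\,\laplace^{-1}(i+1)\,\DD(i)\alpha_\epsi.
\end{equation*}
The hypothesis that $\DD(i)\alpha_\epsi$ is pre-compact in $\W^{-1,p}$ gives a strong limit $\beta$ in $\W^{-1,p}$; applying the bounded operator $\laplace^{-1}(i+1):\W^{-1,p}\to\W^{1,p}$ produces strong convergence in $\W^{1,p}$, and then the bounded operator $\DD^*(i):\W^{1,p}\to\L^p$ yields strong $\L^p$-convergence $\rho_\epsi\to\rho:=\DD^*(i)\laplace^{-1}(i+1)\beta$. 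Consistency of $\beta$ with $\DD(i)\alpha$ in the sense of distributions is immediate from $\DD(i)\alpha_\epsi \rightharpoonup \DD(i)\alpha$, so the formulas agree at the limit.

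Having both convergences, $\DD(i-1)\psi_\epsi = \alpha_\epsi - \rho_\epsi \rightharpoonup \alpha-\rho$ weakly in $\L^p$ by combining the weak convergence of $\alpha_\epsi$ with the strong convergence of $\rho_\epsi$; uniqueness of weak limits, together with $\psi_\epsi\to\psi$ in $\L^p$, identifies the limit as $\DD(i-1)\psi$, so $\alpha = \DD(i-1)\psi + \rho$. The main obstacle is really only bookkeeping: one must invoke the $\L^p$ Hodge machinery carefully (continuity of $\laplace^{-1}$, commutation with the differentials of the complex, and smoothing by one order), all of which are classical facts from elliptic theory on a closed Riemannian manifold; if $\laplace(i)$ has a non-trivial kernel $\mathcal{H}(i)$, one projects $\alpha_\epsi$ onto $\mathcal{H}(i)^\perp$ first, handling the finite-dimensional harmonic component (which converges in every norm) by lumping it into $\rho_\epsi$.
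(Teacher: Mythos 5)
Your argument is exactly the paper's proof: the same splitting $\psi_\epsi=\DD^*(i-1)\laplace^{-1}(i)\alpha_\epsi$, $\rho_\epsi=\DD^*(i)\DD(i)\laplace^{-1}(i)\alpha_\epsi$, the same use of the commutation in Lemma \ref{lem-hodge-commute} plus continuity of $\laplace^{-1}$ to get strong $\L^p$ convergence of $\rho_\epsi$, Rellich--Kondrachov for $\psi_\epsi$, and uniqueness of weak limits to pass to the limit in the identity. Your extra care with the indices ($\DD(i-1)$ versus the paper's abuse of notation, $\laplace^{-1}(i+1)$ after commuting) and with the harmonic kernel only makes explicit what the paper leaves implicit.
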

\begin{proof}
	We first note the following algebraic identity:
	\begin{align*}
	\alpha_\epsi = \laplace(i)\laplace^{-1}(i)\alpha_\epsi=  \DD(i)\DD^*(i)\laplace^{-1}(i)\alpha_\epsi+\DD^*(i)\DD(i)\laplace^{-1}(i)\alpha_{\epsi}.
	\end{align*}
	Denoting $\psi_\epsi =\DD^*(i)\laplace^{-1}(i)\alpha_\epsi$ and $\rho_\epsi = D^*(i)D(i)\laplace^{-1}(i)\alpha_\epsi$,
	we see that $\alpha_\epsi = \DD(i)\psi_\epsi + \rho_\epsi$.
	
	By Lemma \ref{lem-hodge-commute}, $\rho_\epsi=\DD^*(i)\DD(i)\laplace^{-1}(i)\alpha_\epsi=\DD^*(i)\laplace^{-1}(i)\DD(i)\alpha^{\epsi}$.
	Since $\DD(i)\alpha_\epsi$ is compactly contained in $\W^{-1,p}$ and $\laplace^{-1}(i)$ is continuous,
	$\rho_\epsi$ is compactly contained in $\L^p(M,E(i))$, which implies that $\rho_\epsi$ converges strongly to a limit $\rho$ in $L^p$.
	
	On the other hand, $\psi_\epsi$ is in a bounded set of $\W^{1,p}(M,E(i-1))$ so that $\psi_\epsi$ converges to a limit $\psi$ weakly
	in $\W^{1,p}(M,E(i-1))$ and strongly in $\L^p$, by the Sobolev compact embedding theorem.
	Finally, by uniqueness of weak limits, we have
	$$
	\alpha = \DD(i)\psi + \rho.
	$$
\end{proof}

This observation allows us to obtain the following general div-curl lemma for elliptic complexes.

\begin{lem}\label{lem-div-curl-step}
	Let $p,q>1$ be such that $\frac{1}{p}+\frac{1}{q}=1$, and let $\alpha_\epsi\in\L^p(M,E(i))$,
	and $\beta_\epsi\in\L^q(M,E(i))$ be bounded sequences such that
	\begin{align}
	\DD(i)\alpha_\epsi\Subset\W^{-1,p}(M,E(i+1)),\qquad
	\DD^*(i)\beta_\epsi\Subset\W^{-1,q}(M,E(i-1)).
	\end{align}
	Then, for all $\phi\in C^\infty_c(M)$,
	\begin{equation*}
	\int_M \langle \alpha_\epsi,\beta_\epsi\rangle\phi\rightarrow\int_M \langle \alpha,\beta\rangle\phi,
	\end{equation*}
	which is denoted as $\langle \alpha_\epsi,\beta_\epsi\rangle\rightharpoonup \langle\alpha,\beta\rangle$
	when no ambiguity arises.
\end{lem}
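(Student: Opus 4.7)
The plan is to extract Hodge-type decompositions from both sequences and reduce the distributional limit to pairings in which at least one factor converges strongly. Lemma~\ref{lem-hodge-decomp} applied to $\alpha_\epsi$ yields a decomposition $\alpha_\epsi = \DD\psi_\epsi + \rho_\epsi$ with $\psi_\epsi \in \Gamma(M, E(i-1))$ and $\rho_\epsi \in \Gamma(M, E(i))$ converging strongly in $\L^p$, together with $\DD\psi_\epsi \rightharpoonup \DD\psi$ weakly in $\L^p$. Applying the same lemma to the adjoint complex $\mathfrak{D}^*$ (which is itself elliptic on the same bundles), the hypothesis $\DD^*(i)\beta_\epsi \Subset \W^{-1,q}$ produces a dual decomposition $\beta_\epsi = \DD^*\mu_\epsi + \tau_\epsi$ with $\mu_\epsi \in \Gamma(M, E(i+1))$ and $\tau_\epsi \in \Gamma(M, E(i))$ converging strongly in $\L^q$, together with $\DD^*\mu_\epsi \rightharpoonup \DD^*\mu$ weakly in $\L^q$.

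Substituting both decompositions into the pointwise pairing produces four terms:
\begin{equation*}
\langle \alpha_\epsi, \beta_\epsi\rangle
= \langle \DD\psi_\epsi, \DD^*\mu_\epsi\rangle
+ \langle \DD\psi_\epsi, \tau_\epsi\rangle
+ \langle \rho_\epsi, \DD^*\mu_\epsi\rangle
+ \langle \rho_\epsi, \tau_\epsi\rangle.
\end{equation*}
Each of the last three pairs a weakly convergent sequence against a strongly convergent one (or two strongly convergent sequences), so H\"older's inequality sends them to the analogous expressions with $\epsi$ removed, when tested against any $\phi \in C_c^\infty(M)$. Only the first term, a genuine weak--weak pairing, requires real work.

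The heart of the argument, and the main obstacle, lies in that first term. The plan is to integrate by parts using the fact that $\DD$ and $\DD^*$ are formal $\L^2$-adjoints, yielding
\begin{equation*}
\int_M \phi\, \langle \DD\psi_\epsi, \DD^*\mu_\epsi\rangle
= \int_M \phi\, \langle \psi_\epsi, \DD^*\DD^*\mu_\epsi\rangle
+ \int_M \langle \psi_\epsi, [\DD^*, \phi]\DD^*\mu_\epsi\rangle,
\end{equation*}
where $[\DD^*, \phi]$ is the commutator of the first-order operator $\DD^*$ with multiplication by $\phi$. The principal term vanishes identically because the adjoint complex is differential: $\DD^* \circ \DD^* = 0$ is dual to $\DD \circ \DD = 0$. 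The commutator $[\DD^*, \phi]$ is a zero-order operator whose coefficients depend only on $\dd\phi$ and are compactly supported, so the remaining integral pairs the strongly $\L^p$-convergent $\psi_\epsi$ against a weakly $\L^q$-convergent sequence and hence passes to the limit. This cancellation via $\DD^* \circ \DD^* = 0$ is the compensated-compactness mechanism at work; without it, no weak--weak pairing could be resolved. Collecting the four limits and reversing the integration by parts gives the required identity $\int \phi\, \langle \alpha_\epsi, \beta_\epsi\rangle \to \int \phi\, \langle \alpha, \beta\rangle$.
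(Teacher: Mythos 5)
Your proposal is correct and follows essentially the paper's own proof: the same Hodge-type decompositions from Lemma~\ref{lem-hodge-decomp} applied to $\alpha_\epsi$ along $\mathfrak{D}$ and to $\beta_\epsi$ along $\mathfrak{D}^*$, the same four-term splitting with the mixed and strong--strong terms handled by weak--strong pairing, and the same cancellation mechanism for the remaining weak--weak term. The only (cosmetic) difference is the direction of the integration by parts: you move the derivative onto the $\beta$-factor and use $\DD^*\circ\DD^*=0$ together with the strong $\L^p$ convergence of $\psi_\epsi$, whereas the paper moves it onto $\phi\,\DD(i)\psi_\epsi$ via the Leibniz rule, uses $\DD(i+1)\DD(i)=0$, and exploits the strong $\L^q$ convergence of $\zeta_\epsi$ --- a mirror image of the same argument.
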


\begin{proof}
	We view  sequence $\alpha_\epsi\in\L^p(M,E(i))$ as a part of complex $(\mathfrak{E},\mathfrak{D})$, and $\beta_\epsi\in \Gamma(M,E(i))$
	as a part of complex $(\mathfrak{E},\mathfrak{D}^*)$.
	Applying Lemma \ref{lem-hodge-decomp} yields
	$$
	\alpha_\epsi = \DD(i)\psi_\epsi+\rho_\epsi, \qquad \beta_\epsi = \DD^*(i)\zeta_\epsi+\xi_\epsi.
	$$
	Take $\phi\in C^\infty_c(M)$. Then we have
	\begin{align*}
	\int_M \langle \alpha_\epsi,\beta_\epsi\rangle\phi
	=& \int_M \langle \DD(i)\psi_\epsi,\DD^*(i)\zeta_\epsi\rangle \phi
	+\int_M \langle \DD(i)\psi_\epsi,\xi_\epsi\rangle\phi\\
	&+\int_M \langle \rho_\epsi,\xi_\epsi\rangle\phi
	+ \int_M \langle \rho_\epsi, \DD^*(i)\zeta_\epsi\rangle\phi.
	\end{align*}
	
	By the H\"{o}lder inequality, $\langle \rho_\epsi,\xi_\epsi\rangle \rightarrow\langle \rho,\xi\rangle$ strongly in $\L^1$,
	so that $\int_M \langle \rho_\epsi,\xi_\epsi\rangle\phi\rightarrow\int_M \langle \rho,\xi\rangle\phi$.
	
	The second terms and fourth terms are treated similarly. For example, $\DD(i)\psi_\epsi$ converges weakly in $\L^p$,
	while $\xi_\epsi$ converges strongly in $\L^q$, which leads to
	$$
	\int_M \langle \DD(i)\psi_\epsi,\xi_\epsi\rangle\phi\rightarrow\int_M \langle \DD(i)\psi,\xi\rangle\phi.
	$$
	
	Finally, we focus on the first term. Recall that $\DD(i)$ is a first-order linear differential operator
	so that, for all $i\in\N$, we may write
	\begin{equation*}
	\DD(i)=\sum_{|\mu|\leq1}P_\mu(i) \partial^\mu.
	\end{equation*}
	By the Leibniz rule, we have
	\begin{align*}
	\DD(i+1)(\phi \DD(i)\psi_\epsi) &= \sum_{|\mu|\leq1}
	\sum_{\kappa\leq\mu}  {\mu\choose \kappa} \partial^\kappa\phi P_\mu(i+1)\partial^{\mu-\kappa} \DD(i)\psi_\epsi\\
	&= \phi \DD(i+1)\DD(i)\psi_\epsi + \sum_{|\mu|\leq1}
	\sum_{0\neq \kappa\leq\mu} \partial^\kappa\phi P_\mu(i+1)\partial^{\mu-\kappa}\DD(i)\psi_\epsi.
	\end{align*}
	Notice that $\mathfrak{D}$ is an exact complex so that $\DD(i+1)\DD(i)=0$.
	Thus, we are left with
	\begin{equation}\label{A.11}
	\DD(i+1)(\phi \DD(i)\psi_\epsi)= \sum_{|\mu|\leq1}
	\sum_{0\neq \kappa\leq\mu}\partial^\kappa\phi P_\mu(i+1)\partial^{\mu-\kappa}D(i)\psi_\epsi.
	\end{equation}
	We note that $\DD(i)$ is a first-order operator
	so that, whenever $0\neq\kappa\leq\mu$ and $|\mu|\leq1$, we conclude that $\mu=\kappa$.
	Then the right-hand side of \eqref{A.11} simplifies further to
	\begin{equation*}
	\DD(i+1)(\phi \DD(i)\psi_\epsi)= \sum_{|\mu|\leq1} \partial^\mu\phi P_\mu(i+1) \DD(i)\psi_\epsi.
	\end{equation*}
	
	We now integrate by parts to obtain
	\begin{align*}
	\int_M \langle \phi \DD(i)\psi_\epsi, \DD^*(i)\zeta_\epsi\rangle
	&= -\int_M \langle \DD(i)(\phi \DD(i)\psi_\epsi),\zeta_\epsi\rangle\\
	&= - \sum_{|\mu|\leq1}
	\int_M  \partial^\mu\phi \left\langle P_\mu(i+1) \DD(i)\psi_\epsi,\zeta_\epsi\right\rangle.
	\end{align*}
	Recall that, by construction, $\zeta_\epsi$ converges to $\zeta$ strongly in $\L^q$.
	Furthermore, $\DD(i)\psi_\epsi$ converges weakly in $\L^p$.
	Therefore, we have
	\begin{align*}
	\int_M \langle \phi \DD(i)\psi_\epsi, \DD^*(i)\zeta_\epsi\rangle
	&= - \sum_{|\mu|\leq1}\int_M \partial^\mu\phi \left\langle P_\mu(i+1)\DD(i)\psi_\epsi,\zeta_\epsi\right\rangle\\
	&\xrightarrow{\epsi\rightarrow0}- \sum_{|\mu|\leq1}
	\int_M  \partial^\mu\phi \left\langle P_\mu(i+1) \DD(i)\psi,\zeta\right\rangle\\
	&\quad\quad\,\, = \int_M \langle\phi\DD(i)\psi, \DD^*\zeta\rangle,
	\end{align*}
	where we have integrated by parts once more for the last equality.
	This completes the proof.
\end{proof}

Lemma \ref{lem-div-curl-step} is a generalization of the standard div-curl lemma.
Indeed, as an example of the approach in this appendix, we consider the complex:
\begin{equation*}
0\rightarrow \Gamma(M)\xrightarrow{\grad} \Gamma(M,TM)\xrightarrow{\curl}\Gamma(M,TM\otimes TM)\rightarrow\cdots
\end{equation*}
whose adjoint complex is
\begin{equation*}
\cdots\rightarrow\Gamma(M,TM)\xrightarrow{\div}\Gamma(M)\rightarrow0.
\end{equation*}
For instance, if $\dim M=3$, we have the short exact sequence:
\begin{equation*}
0\rightarrow \Gamma(M)\xrightarrow{\grad} \Gamma(M,TM)\xrightarrow{\curl} \Gamma(M,TM)\xrightarrow{\div} \Gamma(M)\rightarrow0,\quad\dim M=3.
\end{equation*}
Applying Lemma \ref{lem-div-curl-step} to these complexes,
we obtain the classical div-curl lemma \cite{tartar-pde,murat-compcomp1}: {\it Let $u_\epsi$ and $v_\epsi$ be sequences of vector fields bounded in $\L^p$ and $\L^q$, respectively,
	and $1<p,q<\infty$ such that $\div u_\epsi\Subset \W^{-1,p}$ and $\curl v_\epsi\Subset\W^{-1,q}$.
	Then $\langle u_\epsi,v_\epsi\rangle\weakstar \langle u,v\rangle$}.

We now focus on the de Rham complex:
\begin{equation}\label{eqn-de-rham-complex}
\cdots\xrightarrow{\dd}\Gamma(M,\wedge^kT^*M)\xrightarrow{\dd}\Gamma(M,\wedge^{k+1}T^*M)\xrightarrow{\dd}\cdots
\end{equation}
where $\wedge^kT^*M$ is the alternate $k$-th power of the cotangent bundle ({\it i.e.}, $k$-forms on $M$),
and $\dd$ is the exterior derivative.
We denote by $\delta$ the adjoint of $\dd$.
Since $M$ is a Riemannian manifold, $\wedge^kT^*M$ is equipped with an inner product.
We obtain a div-curl lemma for the products of differential forms.
As before, let $\alpha_\epsi\in\L^p(M,\wedge^kT^*M)$ and $\beta_\epsi\in\L^q(M,\wedge^kT^*M)$
such that $\dd\alpha_\epsi\Subset\W^{-1,p}(M,\wedge^{k+1}T^*M)$ and  $\delta\beta_\epsi\Subset\W^{-1,q}(M,\wedge^{k-1}T^*M)$.
Then, by Lemma \ref{lem-div-curl-step}, 
\begin{equation*}
\int_M \langle \alpha_\epsi,\beta_\epsi\rangle\phi\rightarrow\int_M \langle \alpha,\beta\rangle\phi
\qquad\,\,\mbox{for all $\phi\in C^\infty_c(M)$}.
\end{equation*}

Note that
$$
\int_M \langle \alpha,\beta\rangle\phi=\int_M\phi(\alpha\wedge*\beta),
$$
and the confinement hypothesis for $\delta\beta_\epsi$ is equivalent to the statement that
$$
\dd(*\beta_\epsi)\Subset\W^{-1,q}(M,\wedge^{n-k+1}T^*M).
$$
Thus, using the graded structure of the de Rham complex, we see that the following result holds:
{\it Let $\alpha_\epsi\in\L^p(M,\wedge^kT^*M)$ and $\beta_\epsi\in\L^q(M,\wedge^mT^*M)$
	such that
	$$
	\dd\alpha_\epsi\Subset\W^{-1,p}(M,\wedge^{k+1}T^*M), \qquad
	\dd\beta_\epsi\Subset\W^{-1,q}(M,\wedge^{m+1}T^*M).
	$$
	Then, for any smooth, compactly supported $\phi\in \Gamma(M, \wedge^{n-k-m}T^*M)$,
	\begin{equation*}
	\int_M \phi\wedge\alpha_\epsi\wedge\beta_\epsi\rightarrow\int_M \phi\wedge \alpha\wedge\beta.
	\end{equation*}
}
This corresponds to a special case of the main result of \cite{div-curl-rrt}.
By induction, the corresponding general result of \cite{div-curl-rrt} can be obtained,
which concerns general alternating products of differential forms.

\end{document}